\theoremstyle{plain}
\newtheorem{theorem}[equation]{Theorem}
\newtheorem{lemma}[equation]{Lemma}
\newtheorem{corollary}[equation]{Corollary}
\newtheorem{proposition}[equation]{Proposition}
\theoremstyle{definition}
\newtheorem{definition}[equation]{Definition}
\theoremstyle{remark}
\newtheorem{remark}[equation]{Remark}
\newcommand{\M}{{\mathcal M}}
\numberwithin{equation}{section}
\newcommand{\bN}{\mathbb{N}}
\newcommand{\bR}{\mathbb{R}}
\providecommand{\norm}[1]{\lVert#1\rVert}
\begin{document}

\title[Singular degenerate elliptic and parabolic equations in non-divergence form]{Weighted mixed-norm $L_p$-estimates for elliptic and parabolic equations in  non-divergence form with singular degenerate coefficients}

\author[H. Dong]{Hongjie Dong}
\address[H. Dong]{Division of Applied Mathematics, Brown University,
182 George Street, Providence, RI 02912, United States of America}
\email{Hongjie\_Dong@brown.edu}

\author[T. Phan]{Tuoc Phan}
\address[T. Phan]{Department of Mathematics, University of Tennessee, 227 Ayres Hall,
1403 Circle Drive, Knoxville, TN 37996-1320 }
\email{phan@math.utk.edu}

\thanks{H. Dong was partially supported by the NSF under agreement  DMS-1600593. T. Phan was partially supported by the Simons Foundation, grant \# 354889.}

\subjclass[2010]{35K65, 35K67, 35J70, 35J75, 35D35, 35D10,  	35B45}
\keywords{Elliptic and parabolic equations in non-divergence forms,  Singular and degenerate coefficients,  Weighted mixed-norm estimates, Calder\'{o}n-Zygmund estimates, Weighted Sobolev spaces}
\begin{abstract}
In this paper, we study both elliptic and parabolic equations in non-divergence form with singular degenerate coefficients. Weighted and mixed-norm $L_p$-estimates and solvability are established under some suitable partially weighted \textup{BMO} regularity conditions on the coefficients.
When the coefficients are constants, the operators are reduced to extensional operators which arise in the study of fractional heat equations and fractional Laplace equations. Our results are new even in this setting and in the unmixed case. For the proof, we establish both interior and boundary Lipschitz estimates for solutions and for higher order derivatives of solutions to homogeneous equations. We then employ the perturbation method by using the Fefferman-Stein sharp function theorem, the Hardy-Littlewood maximum function theorem, as well as a weighted Hardy's inequality.
\end{abstract}

\maketitle

\today
\section{Introduction and main results}
Let $d \in \bN$ and $\bR_+ = (0, \infty)$, $\bR^d_+ = \bR^{d-1} \times \bR_+$ be the upper half space, $T >0$, and $\Omega_T = (-\infty, T] \times \bR^{d}$.
Let $\mathcal{L}$ be the second order partial differential operator in non-divergence form on the upper half space with Gru\v{s}in type singular coefficients
\begin{equation*} %\label{L.def}
\mathcal{L}u = a_{ij}(t, x) D_{ij}u  + \frac{\alpha}{x_d} a_{dj}(t, x) D_j u, \quad  (t, x) = (t, x', x_d) \in
\Omega_T, 
\end{equation*}
where $\alpha \in (-1,1)$ is a fixed number, and $(a_{ij})_{d \times d} :\Omega_T \rightarrow \bR^{d\times d}$ is a given bounded measurable matrix-valued function which satisfies the following uniform ellipticity condition with the ellipticity constant $\nu>0$
\begin{equation} \label{ellipticity}
\nu |\xi|^2 \leq a_{ij}(t, x)\xi_i \xi_j, \quad |a_{ij}(t, x)| \leq \nu^{-1},
\end{equation}
for any $\xi = (\xi_1, \xi_2, \ldots, \xi_n) \in \bR^d$ and $(t,x) \in \Omega_T$.
Also let $a_0, c_0: \Omega_T \rightarrow \bR$ be measurable functions satisfying
\begin{equation}  \label{a-b.zero0}
\nu \leq a_0(t,x), \ c_0(t,x) \leq \nu^{-1} \quad \text{for a.e.} \quad (t,x) \in \Omega_T.
%, \quad \text{for a.e.} \quad x_d \in \bR_+.
\end{equation}
%Moreover, $b_j : \bR_+ \times \bR^d_+ \rightarrow \bR$ and $c:  \bR_+ \times \bR^d_+ \rightarrow \bR$ are given bounded, measurable functions
%\begin{equation} \label{b-c.boundedness}
%|b_j(t, x)| + |c(t, x)| \leq \nu^{-1}, \quad \text{for a.e.} \quad x \in \bR^d_+, \ t \in \bR_+.
%\end{equation}

%In this paper, w
We are interested in the regularity and solvability in weighted and mixed-norm Sobolev space $W^{1,2}_{q,p}(\Omega_T,  \omega x_d^{\alpha}\, dxdt)$ of the following type of parabolic equations with singular coefficients %of extensional type %with homogeneous conormal type boundary condition on boundary $\bR_+ \times \bR^{d-1} \times \{x_d =0\}$, and zero initial data at $t =0$
\begin{equation} \label{main-eqn}
\left\{
\begin{array}{ccc}
a_0 u_t - \mathcal{L} u +\lambda  c_0 u & = & f(t, x) \quad    \\
\displaystyle{\lim_{x_d \rightarrow 0^+}  x_d^{\alpha} a_{dj}(t, x', x_d) D_j u (t, x', x_d)} & = & 0
%u(0, x) & = &0, & \quad x = (x', x_d) \in  \bR^d_+.
\end{array} \quad \text{in} \quad \Omega_T. \right.
\end{equation}
When the coefficients $a_{ij}, c_0$ and the data $f$ are time independent, we also study the corresponding elliptic equations% with singular coefficients% extensional type
\begin{equation} \label{elli-main-eqn}
\left\{
\begin{array}{ccc}
- \mathcal{L} u +\lambda c_0 u & = & f( x)   \\
\displaystyle{\lim_{x_d \rightarrow 0^+}  x_d^{\alpha} a_{dj}( x', x_d) D_j u (x', x_d)} & = & 0
\end{array}  \quad \text{in} \quad \bR^d_+. \right.
\end{equation}
Observe that by dividing both sides by $a_0$ or $c_0$ and replacing $\nu$ with $\nu^2$, we may assume that either $a_0\equiv 1$ or $c_0\equiv 1$.

In the special case when $(a_{ij})_{d\times d}$ is the $d\times d$-identity matrix and $a_0\equiv c_0\equiv 1$, the operator $\mathcal{L}$ appears in the study of fractional Laplace equations and fractional heat equations; see \cite{BG17, Caffa-Sil, Caffa-Stinga, Stinga}. In general, the class of equations with singular coefficients as in \eqref{main-eqn} and \eqref{elli-main-eqn} appear in many other problems such as in mathematical finance (see \cite{Pop-1, Pop-2}), in geometric equations (see \cite{Grushin, Gr-Sa}) and in mathematical biology (see \cite{EM}).  Note also that if $\alpha =0$, the considered equations are reduced to the usual second-order elliptic/parabolic equations in non-divergence form with the Neumann or oblique boundary condition. In this case, the existence, uniqueness and regularity estimates in Sobolev spaces %for solutions of the equations \eqref{main-eqn} and \eqref{elli-main-eqn} 
have been studied extensively in classical literature such as \cite{FHL, Krylov-book, Me}.

The main goals in this paper are to develop regularity estimates in weighted and mixed-norm Sobolev spaces and to study existence and uniqueness of solutions of both parabolic and elliptic equations \eqref{main-eqn} and \eqref{elli-main-eqn}. In addition to this, for homogeneous equations and with nicer coefficient $(a_{ij})$, we also develop local interior and boundary $L_\infty$-estimates for solutions and their higher order derivatives. This paper therefore provide fundamental theory for the study of other problems arising in nonlocal fractional heat equations, fractional Laplace equations, mathematical finance, mathematical biology, and problems in geometric equations.  To the best of our knowledge, this paper is the first of its kind that provides regularity theory in Sobolev spaces for the equations in non-divergence form with measurable and singular coefficients. Even when the coefficients are constants, and in the un-weighted case, the results of the paper are completely new.

To put our study in perspective, let us recall several related results. First of all, due to its interest in calculus of variation,  the study of elliptic and parabolic equations with singular-degenerate coefficients appeared in many classical papers, for example \cite{Fabes-1, Fabes, Str, Stredulinsky, Smith-Stredulinsky, Chia-Sera, Chiarenza-1, Chiarenza-Serapioni}.  Recently, due to various interests, singular-degenerate equations are also studied in \cite{Caffa-Sil, Caffa-Stinga, Pop-1, Pop-2, Grushin, Gr-Sa, EM, Stinga}. In those mentioned paper,  Harnack inequalities and H\"{o}lder regularity theory are established.  On the other hand, regularity theory in Sobolev spaces for elliptic and parabolic equations in divergence form with singular-degenerate coefficients was only studied very recently; see \cite{CMP, Dong-Phan, MP}.
%In these papers, equations in divergence-form are considered and $W^{1,p}$-regularity are established under suitable conditions.
However, regularity theory in Sobolev spaces for equations in non-divergence form with singular-degenerate coefficients is still not part of the literature. Also, even for homogeneous equations with constant coefficients, Lipschitz regularity theory estimates for solutions of \eqref{main-eqn}-\eqref{elli-main-eqn} and for their higher order derivatives are not yet established. The goals of this paper are to establish those fundamental estimates, and then use them to obtain regularity theory in weighted and mixed-norm Sobolev spaces for solutions of \eqref{main-eqn} as well as of \eqref{elli-main-eqn}.

To state the main results of the paper, we introduce some definitions and notation used in the paper.  For each $x= (x', x_{d}) \in \bR^{d}_+$ and $\rho >0$, we write
\[
D_\rho(x) = B_\rho'(x') \times (x_{d} -\rho, x_{d} + \rho), \quad D_\rho^+(x) =  B_\rho'(x') \times ((x_{d} -\rho)^+, x_{d} + \rho),
\]
where $B_\rho'(x')$ is the ball in $\bR^{d-1}$  of radius $\rho$ and centered at $x'$, and $a^+ = \max\{0, a\}$ for every real number $a$. Moreover, for each $z = (t, x)=(z',x_d) \in \bR \times \bR^n$, the parabolic cylinder and half-parabolic cylinder in $\bR \times \bR^n$ of radius $\rho$ centered at $z$ are denoted by
$$Q_\rho(z) = (t - \rho^2, t] \times D_\rho(x), \quad Q_\rho^+(z) = (t- \rho^2, t] \times D^+_\rho(x).
$$
When $z =(0,0)$, we use the abbreviations
\[
B'_\rho = B'_\rho (0),\quad D_\rho = D_\rho(0),\quad Q_\rho = Q_\rho(0), \quad \text{etc}. % \quad Q'_\rho = Q'_\rho(0)
\]
For every $z_0 = (t_0, x_0' , x_{d0}) \in \overline{\Omega}_T$, $\rho >0$, %the followings hold for all $i, j \in \{1,2,\ldots, d\}$,
and a measurable function $f$ defined in $Q_{\rho}^+(z_0)$, we denote
\[
\fint_{Q_{\rho}^+(z_0)} f(z) \ \mu(dz)=\frac{1}{\mu(Q_{\rho}^+(z_0))} \int_{Q_{\rho}^+(z_0)}  f(z) \ \mu(dz)
\]
with
\[
\mu(Q_{\rho}^+(z_0)) = \int_{Q_{\rho}^+(z_0)} \ \mu(dz),\]
and we also denote
\begin{equation} \label{average-a}
 [f]_{\rho, z_0}(x_d)=\fint_{Q'_\rho(z_0')} f(t, x', x_d)\, dx'dt ,
\end{equation}
where $z_0' = (t_0, x_0')$ and $Q'_\rho(z_0') = (t_0 -\rho^2, t_0] \times B_\rho'(x_0')$, the parabolic cylinder in $\bR \times \bR^{d-1}$. Moreover, the partial weighted mean oscillations of the matrix $a= (a_{ij})_{i,j=1}^d$, the coefficients $a_0$, and $c_0$ is denoted by
\begin{align} \label{a-sharp}
a_{\rho}^\#(z_0) &= \max_{ \ i, j = 1,2,\ldots, d}\fint_{Q_\rho^+(z_0)}\Big|a_{ij}(z)-[a_{ij}]_{\rho, z_0}(x_d)\Big|\,\mu(dz)\nonumber\\
&\quad +\fint_{Q_\rho^+(z_0)}\Big(\big|a_0(z)-[a_0]_{\rho, z_0}(x_d)\big|+\big|c_0(z)-[c_0]_{\rho, z_0}(x_d)\big|\Big)\,\mu(dz)
\end{align}
for $z_0\in \overline{\Omega_T}$,
where $\mu(y) = |y|^\alpha$ for $y \in \bR \setminus\{0\}$ and $\mu(dz) = \mu(x_d) \ dx dt$.

For $p,q\in (1,\infty)$ and for given weights $\omega_0=\omega_0(t)$ and $\omega_1=\omega_1(x)$, we denote the weighted mixed-norm Lebesgue space on $\Omega_T$ by $L_{q,p}(\Omega_T,\omega\,d\mu)$ equipped with the norm
\begin{equation*}
				%			\label{eq0806_02}
\|f\|_{L_{q,p}(\Omega_T, \omega\, d\mu)}=\left(\int_0^T\big(\int_{\bR^d_+} |f|^p \omega_1(x)\,\mu(dx)\big)^{q/p}\omega_0(t)\,dt\right)^{1/q},
\end{equation*}
where $\mu(dx) = \mu(x_d) \ dx$ and $\omega(t,x)=\omega_0(t)\omega_1(x)$.
Similarly,  $W^{1,2}_{q,p}(\Omega_T, \omega\, d\mu)$ denotes the weighted mixed-norm Sobolev space equipped with the norm
\[
\norm{u}_{W^{1,2}_{q, p} (\Omega_T, \omega\, d\mu)} = \norm{u}_{L_{q, p}(\Omega_T, \omega\,d\mu)} + \norm{u_t}_{L_{q, p} (\Omega_T, \omega\, d\mu)} + \norm{Du}_{L_{q, p} (\Omega_T, \omega\, d\mu)} + \norm{D^2u}_{L_{q, p}(\Omega_T, \omega\, d\mu)}.
\]
When $p =q$ and $\omega \equiv 1$, we write $L_p(\Omega_T, d\mu) = L_{p, p}(\Omega_T, d\mu)$  and $W^{1,2}_p(\Omega_T, d\mu) = W^{1,2}_{p,p}(\Omega_T, d\mu)$.

In addition to the ellipticity condition \eqref{ellipticity}, we assume that the coefficient matrix $a= (a_{ij})_{i,j=1}^d : \Omega_T  \rightarrow \bR^{d\times d}$ satisfies the structural condition
\begin{equation} \label{extension-type-matrix}
a_{dj}(t,x) =0, \quad j = 1,2,\ldots, d-1.
\end{equation}
We would like to point out that this structural condition \eqref{extension-type-matrix} holds for a large class of equations arising in other problems such as \cite{Caffa-Sil, Pop-1, EM, Grushin, Gr-Sa}.

We now refer readers to Section \ref{Prelim}, where we recall and discuss the Muckenhoupt $A_p$-class of weights,  and the $M_{p}$-class of weights in which a certain type of weighted Hardy's inequality holds. % ; see also \cite{Muck, Mucken-72}.
Our first main result is the following theorem on the existence, uniqueness, and regularity estimates for solutions of \eqref{main-eqn}.
\begin{theorem}
\label{para-main.theorem}  Let $\nu \in (0,1)$, $T\in (-\infty,\infty]$, $p, q, K \in (1, \infty)$, $\alpha \in (-1,1)$, and $\rho_0 >0$. Then
there exist $\delta = \delta (d, \nu, p, q, \alpha, K) >0$ sufficiently small, and $\lambda_0(\nu,d,p,q,\alpha, K, \rho_0)>0$ sufficiently large such that the following statement holds. Suppose that $\omega_0\in A_q(\bR)$, $\omega_1 \in A_p(\bR^{d-1})$, $\omega_2 \in A_p(\bR_+, \mu)) \cap M_{p}(\mu)$ with
$$
[\omega_0]_{A_q(\bR)}, \quad [\omega_1]_{A_p(\bR^{d-1})} ,  \quad [\omega_2]_{A_p(\bR_+, \mu)}, \quad [\omega_2]_{M_{p}(\mu)} \le K.$$
%such that if %the following statement holds.
%Suppose also that the following weighted partial \textup{VMO} condition  holds
Suppose also that \eqref{ellipticity}, \eqref{a-b.zero0}, \eqref{extension-type-matrix} hold and
\begin{equation} \label{para-VMO}
a^{\#}_{\rho}(z_0) \leq \delta, \quad \forall \ \rho \in (0, \rho_0), \quad \forall \ z_0 \in  \overline{\Omega}_T.
\end{equation}
Then for any $f \in L_{q,p}(\Omega_T, \omega\, d\mu)$ and $\lambda \ge \lambda_0$, there exists a unique solution $u \in W^{1,2}_{q,p}(\Omega_T,  {\omega}\ d\mu)$ of \eqref{main-eqn}. Moreover, it holds that
\begin{equation} \label{main-para.est}
\begin{split}
& \norm{u_t}_{L_{q,p}(\Omega_T, \omega\, d\mu)} + \norm{D^2u}_{L_{q,p}(\Omega_T, \omega\, d\mu)} + \sqrt{\lambda} \norm{Du}_{L_{q,p}(\Omega_T, \omega\, d\mu)} + \lambda \norm{u}_{L_{q, p}(\Omega_T, \omega\, d\mu)}\\
& \leq  C(\nu, d, p, q, \alpha, K) \norm{f}_{L_{q,p}(\Omega_T, \omega\, d\mu)},
\end{split}
\end{equation}
where $\omega(t,x) = \omega_0(t) \omega_1(x') \omega_2(x_d)$ for all $(t,x) = (t, x' , x_d) \in \Omega_T$, and $d\mu = x_d^\alpha \ dx dt$.
\end{theorem}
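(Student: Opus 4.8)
The plan is to prove the a priori estimate \eqref{main-para.est} first, and then to obtain existence and uniqueness from it. Uniqueness is immediate upon applying \eqref{main-para.est} to the difference of two solutions; existence follows by a standard density and method-of-continuity argument, once one knows $L_2(\Omega_T,d\mu)$-solvability of the model operator $u_t-\Delta_{x'}u-D_{dd}u-\tfrac{\alpha}{x_d}D_d u+\lambda u=f$ with the conormal condition, which is obtained by energy methods after noting that multiplying the model equation by $x_d^\alpha$ turns it into a degenerate equation in divergence form whose associated weighted bilinear form is coercive for $\alpha\in(-1,1)$. By dividing \eqref{main-eqn} by $a_0$ and accordingly adjusting $\nu$, we may assume $a_0\equiv1$. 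The bulk of the work is the a priori estimate, which we establish through the Fefferman--Stein sharp function theorem combined with a mean-oscillation (perturbation) estimate.

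For the latter, fix $z_0\in\overline{\Omega}_T$, $0<r<\rho_0$, and a large parameter $\kappa>1$, and replace the coefficients inside $Q_{\kappa r}^+(z_0)$ by their partial averages $\bar a_{ij}(x_d)=[a_{ij}]_{\kappa r,z_0}(x_d)$, $\bar c_0(x_d)=[c_0]_{\kappa r,z_0}(x_d)$; by the structural condition \eqref{extension-type-matrix} the frozen operator $\mathcal{L}_0$ retains the Gru\v{s}in structure and has coefficients depending on $x_d$ only. Write $u=w+v$ on $Q_{\kappa r}^+(z_0)$, where $w$ solves the homogeneous frozen equation $w_t-\mathcal{L}_0 w+\lambda\bar c_0 w=0$ with $\lim_{x_d\to0^+}x_d^\alpha\bar a_{dd}D_d w=0$ and $w=u$ on the parabolic boundary, and $v=u-w$. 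For $w$ one uses the interior and boundary Lipschitz estimates announced in the abstract: the quantities $w_t$, $D_{x'}Dw$ and the conormal combination $D_{dd}w+\tfrac{\alpha}{x_d}D_d w$ are, after subtracting their partial averages $[\cdot]_{r,z_0}(x_d)$ (in the spirit of the partial-BMO seminorm $a^\#_\rho$), controlled in mean oscillation over $Q_r^+(z_0)$ by $\kappa^{-\gamma}$, for some $\gamma>0$, times their $L_{p_0}(Q_{\kappa r}^+(z_0),d\mu)$-averages, for a fixed $p_0\in(1,p)$. For $v$, one has $v_t-\mathcal{L}_0 v+\lambda\bar c_0 v=f+(\bar a_{ij}-a_{ij})D_{ij}u+\tfrac{\alpha}{x_d}(\bar a_{dd}-a_{dd})D_d u+\lambda(\bar c_0-c_0)u$, so a basic unweighted $L_{p_0}(d\mu)$-estimate for $\mathcal{L}_0$ together with H\"older's inequality and \eqref{para-VMO} bounds the corresponding $L_{p_0}(Q_{\kappa r}^+(z_0),d\mu)$-averages for $v$ by $C\delta^{\beta}$ times that of $|D^2 u|$, plus the contributions of $f$ and of the lower-order terms $\lambda|u|$ and $x_d^{-1}|D_d u|$.

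The second derivative $D_{dd}u$ and the singular quantity $x_d^{-1}D_d u$, which are not among the oscillating quantities above, are recovered at the end. From \eqref{main-eqn} and \eqref{extension-type-matrix} (with $a_0\equiv1$) one has $a_{dd}D_{dd}u+\tfrac{\alpha}{x_d}a_{dd}D_d u=u_t-\sum_{i,j<d}a_{ij}D_{ij}u-\sum_{i<d}a_{id}D_{id}u+\lambda c_0 u-f$, while the weighted Hardy inequality valid for $\omega_2\in M_p(\mu)$ gives $\|x_d^{-1}D_d u\|_{L_{q,p}(\Omega_T,\omega\,d\mu)}\le C\|D_{dd}u\|_{L_{q,p}(\Omega_T,\omega\,d\mu)}$, the vanishing trace of $D_d u$ at $\{x_d=0\}$ needed for Hardy being furnished by the boundary condition in \eqref{main-eqn} and \eqref{ellipticity}. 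Combining the decay estimate for $w$ with the perturbation bound for $v$ and passing to the sharp function yields, for a suitable $p_0\in(1,p)$,
\begin{align*}
\big(|u_t|+|D^2 u|+\sqrt{\lambda}\,|Du|\big)^{\#}(z_0)
&\le C\delta^{\beta}\big[\M\big(|u_t|^{p_0}+|D^2 u|^{p_0}+\lambda^{p_0/2}|Du|^{p_0}+\lambda^{p_0}|u|^{p_0}\big)(z_0)\big]^{1/p_0}\\
&\quad+C\big[\M\big(|f|^{p_0}\big)(z_0)\big]^{1/p_0},
\end{align*}
where $\M$ is the Hardy--Littlewood maximal operator for the parabolic quasi-distance and the doubling measure $d\mu=x_d^\alpha\,dx\,dt$, for which $(\Omega_T,d\mu)$ is a space of homogeneous type. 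Since $\omega(t,x)=\omega_0(t)\omega_1(x')\omega_2(x_d)$ is a product of an $A_q(\bR)$ weight, an $A_p(\bR^{d-1})$ weight, and an $A_p(\bR_+,\mu)$ weight, the Fefferman--Stein and Hardy--Littlewood theorems are available on $L_{q,p}(\Omega_T,\omega\,d\mu)$ (the mixed-norm versions following by iterating the weighted one-variable bounds, with constants depending only on $d,\nu,p,q,\alpha,K$). Taking the $L_{q,p}(\Omega_T,\omega\,d\mu)$-norm above, choosing $\delta$ small to absorb the $\delta^{\beta}$-term into the left-hand side, and then choosing $\lambda_0$ large to absorb $\sqrt{\lambda}\,\|Du\|$ and $\lambda\|u\|$ by interpolation (where the restriction $r<\rho_0$ is used), we obtain \eqref{main-para.est}; existence and uniqueness then follow as explained above.

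The main obstacle is the first ingredient used for $w$: the interior and, especially, \emph{boundary} Lipschitz estimates for solutions of the homogeneous frozen operator $\mathcal{L}_0$ and of their higher-order derivatives, uniformly up to the degenerate set $\{x_d=0\}$ and for merely measurable coefficients $\bar a_{ij}(x_d)$. Since the weight $x_d^\alpha$ and the drift $\tfrac{\alpha}{x_d}D_d$ are singular at the boundary, one cannot differentiate the equation in $x_d$; instead one differentiates only in $(t,x')$, which is legitimate because $\mathcal{L}_0$ has coefficients depending on $x_d$ alone, so each tangential derivative $V=D_{x'}^k w$ (and $w_t$) again solves an $\mathcal{L}_0$-equation. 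The $x_d$-part of that equation can be written, for a.e.\ fixed $(t,x')$, as $\tfrac{d}{dx_d}\big(x_d^\alpha D_d V\big)=\bar a_{dd}^{-1}\,x_d^\alpha\,G$, where $G$ collects $V_t$, $\lambda\bar c_0 V$, and the tangential derivatives of order $\le2$ of $V$ and of the conormal flux $x_d^\alpha D_d V$; the boundary condition forces $x_d^\alpha D_d V\to0$ as $x_d\to0^+$, so integrating this first-order relation controls $x_d^\alpha D_d V$, and hence $D_d V$, up to $\{x_d=0\}$. Turning this coupled tangential-elliptic / $x_d$-ODE system into $L_\infty$--$L_{p_0}$ estimates with a genuine decay exponent $\gamma>0$, uniformly up to the degenerate boundary, is the delicate part; once it is in hand, the perturbation and sharp function machinery above is comparatively routine, the only additional care being that $d\mu$ is not Lebesgue measure.
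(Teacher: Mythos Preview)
Your overall architecture is the same as the paper's: freeze to $x_d$-dependent coefficients, prove Lipschitz/oscillation bounds for the homogeneous frozen problem, run the Fefferman--Stein sharp function argument with the weighted mixed-norm maximal theorem, and recover $D_{dd}u$ and $D_d u/x_d$ via a weighted Hardy inequality. Two concrete points, however, are not in order.

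\textbf{The large-scale step and the role of $\lambda_0$.} You restrict to $0<r<\rho_0$ when deriving the oscillation estimate, but the sharp function $\hat U^{\#}$ involves a supremum over \emph{all} radii, so you must say what happens when $r\ge \rho_0$, where the partial BMO smallness condition \eqref{para-VMO} is unavailable. Your sentence ``choosing $\lambda_0$ large to absorb $\sqrt\lambda\|Du\|$ and $\lambda\|u\|$ by interpolation (where the restriction $r<\rho_0$ is used)'' does not supply a mechanism: interpolation does not absorb a term comparable to $\|\hat U\|$ itself. The paper handles this by first proving the a~priori estimate under the extra hypothesis that $u$ is supported in a short time interval $(t_1-(\rho_0\rho_1)^2,t_1]$; for $r\ge \rho_0/8$ the trivial bound on the oscillation then carries a factor $\rho_1^{2(1-1/p_2)}$ (H\"older in $t$), which is made small by choosing $\rho_1$ small. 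A partition of unity in $t$ removes the support restriction at the price of an additive $(\rho_0\rho_1)^{-2}\|u\|$, and \emph{this} is what forces $\lambda_0=\lambda_0(\rho_0)$ large. Without this (or an equivalent device) your sharp-function bound does not close.

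\textbf{The Hardy step.} The inequality you state, $\|x_d^{-1}D_d u\|_{L_{q,p}(\omega\,d\mu)}\le C\|D_{dd}u\|_{L_{q,p}(\omega\,d\mu)}$, is not the one encoded by $\omega_2\in M_p(\mu)$, and it is false in general for $\alpha\in(0,1)$: the boundary condition gives $x_d^\alpha D_d u\to 0$, not $D_d u\to 0$. The correct route (the paper's Lemma~\ref{D-dd-control}) is to use the equation and \eqref{extension-type-matrix} to write $D_d(x_d^\alpha D_d u)=x_d^\alpha F$ with $|F|\le C(|u_t|+|DD_{x'}u|+\lambda|u|+|f|)$, integrate from $0$ using the vanishing of $x_d^\alpha D_d u$, and apply the $M_p(\mu)$-Hardy inequality of Lemma~\ref{wei-hardy-lemma} to bound $D_d u/x_d$; then $D_{dd}u$ follows from the equation. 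A smaller point: your decomposition takes $w$ to solve a boundary-value problem for the frozen operator on a half-cylinder with $w=u$ on the parabolic boundary, which itself requires justification in this degenerate setting; the paper instead solves globally for $v$ (using the already-proved $L_q$ theory for $\mathcal{L}_0$) and sets $w=u-v$.
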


In the case when the coefficients and data are time-independent,  for each $x_0 \in \bR^d_+$, we define $[a_{ij}]_{\rho, x_0}$, $[c_0]_{\rho, x_0}$, and $a_{\rho}^{\#}(x_0)$ as in \eqref{average-a} and \eqref{a-sharp} by replacing the parabolic cylinders $Q_{\rho}^+(z_0)$ and {$Q'_\rho(z_0')$} by balls $D_{\rho}^+(x_0)$ and $B'_\rho(x_0')$, respectively. As a consequence of Theorem \ref{para-main.theorem}, we have the following result for the elliptic equation \eqref{elli-main-eqn} with singular coefficients.

\begin{theorem} \label{elli-main.theorem} Let $\nu \in (0,1), p,K \in (1, \infty)$,  $\alpha \in (-1,1)$, and $\rho_0>0$. Then, there exist $\delta = \delta (d, \nu, p, \alpha, K) >0$ sufficiently small, and $\lambda_0(\nu,d,p,\alpha, K,\rho_0)$ sufficiently large such that the following statement holds.  Suppose that  $\omega_1 \in A_p(\bR^{d-1}), \omega_2 \in A_p(\bR_+, \mu) \cap M_{p}(\mu)$ with
$$
[\omega_1]_{A_p(\bR^{d-1})} ,  \quad [\omega_2]_{A_p(\bR_+, \mu)}, \quad [\omega_2]_{M_{p}(\mu)} \le K.
$$
Suppose also that  \eqref{ellipticity}, \eqref{a-b.zero0}, \eqref{extension-type-matrix} hold on $\bR_+^d$ and
\begin{equation} \label{elli-VMO}
a^{\#}_{\rho}(x_0) \leq \delta, \quad \forall \ \rho \in (0, \rho_0), \quad \forall \ x_0 \in  \overline{\bR_+^d}.
\end{equation}
Then for any $f \in L_{p}(\bR^d_+, \omega\, d\mu)$ and for $\lambda \geq \lambda_0$, there exists a unique solution $u \in W^{2}_{p}(\bR^d_+, \omega\, d\mu)$ of \eqref{elli-main-eqn}. Moreover, it holds that
\begin{equation} \label{main-ell.est}
\begin{split}
&  \norm{D^2u}_{L_{p}(\bR_+^d, \omega\, d\mu)} + \sqrt{\lambda} \norm{Du}_{L_{p}(\bR_+^d, \omega\,d\mu)} + \lambda \norm{u}_{L_{p}(\bR_+^d,  \omega\, d\mu)} \\&  \leq  C(\nu, d, p, \alpha, K) \norm{f}_{L_{p}(\bR_+^d, \omega\, d\mu)},
 \end{split}
\end{equation}
where $\omega(x) = \omega_1(x') \omega_2(x_d)$ for all $x = (x', x_d) \in \bR^{d-1} \times \bR_+$, and $d\mu = x_d^\alpha \ dx$.
\end{theorem}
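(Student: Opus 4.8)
The plan is to derive Theorem~\ref{elli-main.theorem} from the parabolic Theorem~\ref{para-main.theorem} by viewing a function $w=w(x)$ on $\bR^d_+$ as a $t$-independent function on $\Omega_\infty=(-\infty,\infty)\times\bR^d_+$. Fix $q=p$ and take the time weight $\omega_0\equiv1$, which lies in $A_q(\bR)$ with $[\omega_0]_{A_q(\bR)}=1\le K$; let $\delta=\delta(d,\nu,p,\alpha,K)$ and $\lambda_0=\lambda_0(\nu,d,p,\alpha,K,\rho_0)$ be the constants produced by Theorem~\ref{para-main.theorem} for the parameters $(d,\nu,p,p,\alpha,K)$. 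Since $a_{ij}$, $a_0$, $c_0$ do not depend on $t$, Fubini's theorem shows that for the trivial $t$-independent extensions one has $[a_{ij}]_{\rho,z_0}(x_d)=[a_{ij}]_{\rho,x_0}(x_d)$ and, for each $z_0=(t_0,x_0)$, the parabolic quantity $a^{\#}_\rho(z_0)$ equals the elliptic quantity $a^{\#}_\rho(x_0)$; hence \eqref{elli-VMO} yields \eqref{para-VMO}, and \eqref{ellipticity}, \eqref{a-b.zero0}, \eqref{extension-type-matrix} hold on $\Omega_\infty$.

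For the a priori estimate and uniqueness, let $w\in W^2_p(\bR^d_+,\omega\,d\mu)$ solve \eqref{elli-main-eqn} with $\lambda\ge\lambda_0$. Fix $\eta\in C_0^\infty((0,1))$ with $\eta\not\equiv0$, set $\eta_R(t)=\eta(t/R)$ for $R\ge1$, and $u_R(t,x)=\eta_R(t)w(x)$. Because the coefficients are $t$-independent, a direct computation gives $a_0\partial_tu_R-\mathcal L u_R+\lambda c_0u_R=a_0\eta_R'w+\eta_Rf=:g_R$ on $\Omega_\infty$, and the co-normal boundary condition holds since $x_d^\alpha a_{dj}D_ju_R=\eta_R\,x_d^\alpha a_{dj}D_jw\to0$; moreover $u_R\in W^{1,2}_{p,p}(\Omega_\infty,\omega\,d\mu)$ and $g_R\in L_{p,p}(\Omega_\infty,\omega\,d\mu)$ since $\eta_R$ has compact support. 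By the uniqueness in Theorem~\ref{para-main.theorem}, $u_R$ is \emph{the} solution, so \eqref{main-para.est} applies. Since $\omega(t,x)=\omega_0(t)\omega_1(x')\omega_2(x_d)$ factors and $u_R$ is a product, each norm in \eqref{main-para.est} splits as $\|\eta_R\|_{L_p(\bR)}$ (or $\|\eta_R'\|_{L_p(\bR)}$ for the $\partial_tu_R$ term) times the corresponding spatial $L_p(\bR^d_+,\omega\,d\mu)$ norm of $w$, while $\|g_R\|_{L_{p,p}(\Omega_\infty,\omega\,d\mu)}\le\nu^{-1}\|\eta_R'\|_{L_p(\bR)}\|w\|_{L_p(\bR^d_+,\omega\,d\mu)}+\|\eta_R\|_{L_p(\bR)}\|f\|_{L_p(\bR^d_+,\omega\,d\mu)}$. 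Dividing \eqref{main-para.est} by $\|\eta_R\|_{L_p(\bR)}$ and using $\|\eta_R'\|_{L_p(\bR)}/\|\eta_R\|_{L_p(\bR)}=R^{-1}\|\eta'\|_{L_p(\bR)}/\|\eta\|_{L_p(\bR)}$ gives
\[
\|D^2w\|_{L_p}+\sqrt{\lambda}\,\|Dw\|_{L_p}+\lambda\|w\|_{L_p}\le \frac{C_1}{R}\,\|w\|_{L_p}+C\,\|f\|_{L_p},
\]
where all norms are over $\bR^d_+$ with weight $\omega\,d\mu$, $C=C(\nu,d,p,\alpha,K)$, and $C_1=C_1(\nu,d,p,\alpha,K)$. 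Choosing $R$ large (so $C_1/R\le1$) and enlarging $\lambda_0$ if needed (so $\lambda_0\ge2$) lets us absorb the first term on the right into $\lambda\|w\|_{L_p}$, proving \eqref{main-ell.est}; taking $f=0$ then forces $w\equiv0$, which is uniqueness.

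For existence, given $f\in L_p(\bR^d_+,\omega\,d\mu)$ and $\lambda\ge\lambda_0$, fix $\psi\in C_0^\infty((0,1))$ with $\int_\bR\psi\,dt=1$ and use Theorem~\ref{para-main.theorem} (with $q=p$, $\omega_0\equiv1$, $T=\infty$) to solve $a_0\partial_tu-\mathcal Lu+\lambda c_0u=\psi(t)f(x)$ on $\Omega_\infty$ with the co-normal boundary condition, obtaining $u\in W^{1,2}_{p,p}(\Omega_\infty,\omega\,d\mu)$. Applying the uniqueness of Theorem~\ref{para-main.theorem} on $\Omega_0=(-\infty,0]\times\bR^d$ shows $u\equiv0$ for $t\le0$, while for $t\ge1$ the function $u$ solves the homogeneous equation; a decay estimate for the homogeneous parabolic problem (for $\lambda$ large) then gives $\int_\bR\bigl(\|u(t,\cdot)\|_{L_p}+\|Du(t,\cdot)\|_{L_p}+\|D^2u(t,\cdot)\|_{L_p}+\|\partial_tu(t,\cdot)\|_{L_p}\bigr)\,dt<\infty$. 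Setting $w(x)=\int_\bR u(t,x)\,dt$, Minkowski's inequality gives $w\in W^2_p(\bR^d_+,\omega\,d\mu)$, and integrating the equation in $t$ and using $u(\pm\infty,\cdot)=0$ yields $-\mathcal Lw+\lambda c_0w=f\int_\bR\psi\,dt-a_0\int_\bR\partial_tu\,dt=f$, with the co-normal boundary condition inherited from $u$. Alternatively, once the model operator $\mathcal L_0u=\Delta u+\frac{\alpha}{x_d}D_du$ (the extension operator, $a_0=c_0=1$) is solved this way, existence for general $\mathcal L$ follows from \eqref{main-ell.est} and the method of continuity along $\mathcal L_\tau=(1-\tau)\mathcal L_0+\tau\mathcal L$, each of which again satisfies \eqref{ellipticity}, \eqref{a-b.zero0}, \eqref{extension-type-matrix}, \eqref{elli-VMO} with the same $\delta$.

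Given Theorem~\ref{para-main.theorem}, the a priori estimate and uniqueness are essentially bookkeeping; the only mild subtlety is keeping the constants free of $q$, which we handled by fixing $q=p$. The real work is the existence step: over the whole time line no $A_q$ weight is integrable, so one cannot simply solve the parabolic equation with $t$-independent data and restrict a slice, and the time-averaging device above needs the integrable-in-$t$ decay of the homogeneous parabolic solution for large $\lambda$. Establishing that decay — which can be extracted from a localized-in-time application of \eqref{main-para.est} together with the large-$\lambda$ coercivity, or is furnished by the homogeneous-equation analysis already developed for Theorem~\ref{para-main.theorem} — is the main point to verify; once a model elliptic case is in hand, the method of continuity closes the argument routinely.
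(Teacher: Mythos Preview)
Your a priori estimate argument coincides with the paper's: both multiply the elliptic solution by a time cutoff $\xi(t/n)$ (your $\eta_R$), apply Theorem~\ref{para-main.theorem} with $q=p$ and $\omega_0\equiv1$, and send the scaling parameter to infinity so that the error term carrying $\|\eta'_R\|_{L_p}/\|\eta_R\|_{L_p}=O(R^{-1})$ disappears. The paper attributes this device to \cite{Krylov-07}; a minor simplification over your version is to let $R\to\infty$ directly rather than fixing $R$ large and absorbing, which avoids any need to enlarge $\lambda_0$.

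For existence the paper simply says it ``can be proved in the same way as in the proof of Theorem~\ref{para-main.theorem}'', i.e., by the method of continuity---so your stated alternative is in fact the paper's route. Your primary approach, solving $a_0u_t-\mathcal Lu+\lambda c_0u=\psi(t)f(x)$ and setting $w=\int_\bR u(t,\cdot)\,dt$, is a different and natural idea, but the gap you flag is genuine and not closed: from $u\in L_p\bigl(\bR;L_p(\bR^d_+,\omega\,d\mu)\bigr)$ one cannot conclude $\int_\bR\|u(t,\cdot)\|_{L_p}\,dt<\infty$ without an additional decay input, and the exponential-in-$t$ decay of the homogeneous parabolic solution you allude to is a semigroup-type statement that is nowhere proved in the paper and does not follow from the a priori bound \eqref{main-para.est} alone. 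As written, then, your existence argument is incomplete. For the method-of-continuity route one still needs a starting point (the model elliptic operator $-\Delta-\tfrac{\alpha}{x_d}D_d+\lambda$); in the parabolic case the paper supplies this via Theorem~\ref{W-2-p.constant.eqn} together with the weighted extrapolation of \cite[Section~8]{Dong-Kim-18}, and the elliptic starting point is then obtained from the parabolic one by the very cutoff-in-time reduction you already used for the a priori estimate, combined with a density argument. That is the step that should replace your time-averaging construction.
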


\begin{remark} The following points regarding Theorems \ref{para-main.theorem} and \ref{elli-main.theorem} are worth mentioning
\begin{itemize}
%\item[\textup{(i)}] The structural condition \eqref{extension-type-matrix} holds for large classes of equations arising in applications such as \cite{Caffa-Sil, Pop-1, Pop-1, EM, Grushin, Gr-Sa}.
\item[\textup{(i)}] If $a_{ij}$ depends only on the $x_d$-variable,  then the conditions \eqref{para-VMO} and \eqref{elli-VMO} always hold. See Theorem \ref{W-2-p.constant.eqn} below. %In this case our results are already new.
\item[\textup{(ii)}] Theorems \ref{para-main.theorem} and \ref{elli-main.theorem} hold when $\omega_2(y) = y^\beta$ for $y \in (0, \infty)$ and with $\beta \in (-\alpha -1, (\alpha +1) (p-1))$. When $\alpha=0$, $p=q$, $\omega \equiv 1$, and $a_{ij}$ are VMO in $x$ and merely measurable in $t$, a similar result was proved recently in \cite{KDZ16}.
\item[\textup{(iv)}]By using a change of variables $y_j=x_j-\lambda_{j} x_d,j=1,2,\ldots,d-1$ and $y_d=x_d$, we can relax the condition \eqref{extension-type-matrix} and replace it with the condition that $a_{dj}=\lambda_{j}a_{dd}$ for $j=1,2,\ldots,d-1$ with constants $\lambda_{j}$.
\item[\textup{(v)}] By localization using the cut-off function $\phi(x) = \phi_1(x') \phi_2(x_d)$ with some standard cut-off functions $\phi_1, \phi_2$ and $x =(x', x_d) \in \bR^{d-1}\times \bR$, local weighted mixed-norm boundary estimates can also be obtained from Theorems \ref{para-main.theorem} and \ref{elli-main.theorem}.
\end{itemize}
\end{remark}

Theorems \ref{para-main.theorem} and \ref{elli-main.theorem} appear for the first time for equations with singular-degenerate coefficients in non-divergence form. We also emphasize again that even in the un-weighted case and when the coefficients $a_{ij}$ depend only on the $x_d$-variable, these theorems are new. In this case, our results give $W^{2,p}$-regularity estimates for the extension problem of the fractional Laplace and fractional heat equations. The results therefore provide the Sobolev counterpart of the $C^\alpha$ and $C^{1,\alpha}$ regularity theory established in \cite{Caffa-Sil, Caffa-Stinga} and in many other papers.  We also note that for equations in divergence form,  similar $W^{1,p}$-regularity estimates are established in \cite{Dong-Phan} for parabolic equations and in \cite{MP} for elliptic equations. When $\alpha =0$, the equations \eqref{main-eqn} and \eqref{elli-main-eqn} are reduced to uniformly elliptic and parabolic equations. These equations are studied extensively in many papers (see \cite{Dong-Kim-1, Dong-Kim, Krylov-book, Kim-Krylov, Kim-Krylov-1, Krylov-07}) and similar results are established. The results of this paper can be considered as a further intensive development in this line of research.

Our proof is based the perturbation method using the Fefferman-Stein sharp functions that was introduced in \cite{Krylov-book} and developed in \cite{Dong-Kim-18, Dong-Kim-1, Dong-Kim, Kim-Krylov, Kim-Krylov-1, Krylov-07}.  See also \cite{Dong-Phan} for a different approach which uses level set estimates for equations in divergence form.
To implement the perturbation technique, we first establish both local interior and boundary $L_\infty$-estimates for solutions and for higher order derivatives of solutions to homogeneous equations.  These results are also topics of independent interests and they can be useful for other purposes.  A weighted Hardy type inequality also plays an important role in our proof.

We conclude this paper by giving an outline of the paper. In the next section, Section \ref{Prelim}, we recall several definitions on weights and weighted inequalities. As an intermediate step, in  Section \ref{simple-coeffients} we study equations with coefficients depending only on the $x_d$-variable. In this section,  local interior and boundary $L_\infty$-estimates for solutions and for higher order derivatives of solutions are established.  A result on unmixed weighted Sobolev estimates for non-homogeneous equations,  which is a special case of Theorem \ref{para-main.theorem},  is also proved in this section. Equations with singular-degenerate partially weighted \textup{BMO} coefficients are studied in Section \ref{VMO-section}, which is concluded with the proofs of Theorems \ref{para-main.theorem} and \ref{elli-main.theorem}.

\section{Preliminaries on analysis of weights and weighted inequalities} \label{Prelim}

In this section, we recall several weighted estimates needed for the paper.
\subsection{Weighted Hardy's inequality}
Let us begin with the following definition.
\begin{definition} Let $\alpha \in (-1,1)$ and $p \in (1,\infty)$, we say that the weight $\omega: \bR_+ \rightarrow \bR_+$ is in $M_{p}(\mu)$ if
\[
[\omega]_{M_{p}(\mu)} : = \sup_{r>0} \left[\int_r^\infty y^{- p(\alpha +1)} \omega(y) \ \mu(dy) \right]^{\frac 1 p} \left[ \int_0^r \omega(y)^{-\frac{1}{p-1}} \ \mu(dy) \right]^{1-\frac{1}{p}} < \infty,
\]
where $\mu(y) = y^\alpha$ for $y \in \bR_+$.
\end{definition}

See Remark \ref{A-B-weight} below for examples of nontrivial $M_p(\mu)$ weights.  The following lemma is a consequent of \cite[Theorem 1]{Mucken-72}.
%, which gives an important role of the class of weight $M_{p}(\mu)$
\begin{lemma}[Weighted Hardy's inequality] \label{wei-hardy-lemma} Let $\alpha \in (-1,1)$, $p \in (1,\infty)$, $\mu(y) = y^\alpha$ for $y \in \bR_+$,  and let $\omega :\bR_+ \rightarrow \bR_+$ be measurable. Then, there is $C  >0$ such that
\begin{equation} \label{Hardy-ineq}
\int_0^\infty \left( \frac{1}{y^{\alpha+1}} \int_0^y  f(s) \ \mu(ds) \right)^p  \omega(y) \ \mu(dy) \leq C \int_0^\infty f(y)^p  \omega(y) \ \mu(dy),
\end{equation}
for any non-negative $f \in L_p(\bR_+, \omega d\mu)$ if and only if $\omega \in M_{p}(\mu)$. Moreover, the constant $C$ depends only on $p$ and $[\omega]_{M_{p}(\mu)}$.
\end{lemma}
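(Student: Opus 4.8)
The plan is to reduce the stated inequality to the classical one-dimensional weighted Hardy inequality of Muckenhoupt \cite[Theorem 1]{Mucken-72}, which characterizes, for Lebesgue measure on $\bR_+$, those pairs of weights $(U,V)$ for which
\[
\int_0^\infty \Big( \int_0^y g(s)\,ds \Big)^p U(y)\,dy \le C \int_0^\infty g(y)^p V(y)\,dy
\]
holds for all nonnegative $g$; the sharp condition is
$\sup_{r>0}\big(\int_r^\infty U\big)^{1/p}\big(\int_0^r V^{-1/(p-1)}\big)^{1-1/p}<\infty$,
and the optimal constant $C$ is comparable to the $p$-th power of this supremum. The main work is therefore a change of variables/identification of weights so that \eqref{Hardy-ineq} matches this template exactly.

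First I would set $g(s) = f(s)\,\mu(s) = f(s)s^\alpha$, so that $\int_0^y f(s)\,\mu(ds) = \int_0^y g(s)\,ds$ is precisely an averaging-free Hardy integral in $g$. Substituting into \eqref{Hardy-ineq}, the left-hand side becomes
\[
\int_0^\infty \Big(\int_0^y g(s)\,ds\Big)^p\, y^{-p(\alpha+1)}\,\omega(y)\, y^\alpha\,dy
= \int_0^\infty \Big(\int_0^y g(s)\,ds\Big)^p\, U(y)\,dy,
\qquad U(y) := y^{\alpha - p(\alpha+1)}\omega(y),
\]
while the right-hand side becomes
\[
\int_0^\infty f(y)^p\,\omega(y)\,y^\alpha\,dy
= \int_0^\infty g(y)^p\, y^{-p\alpha}\,\omega(y)\,y^\alpha\,dy
= \int_0^\infty g(y)^p\, V(y)\,dy,
\qquad V(y) := y^{\alpha(1-p)}\omega(y).
\]
Thus \eqref{Hardy-ineq} for all nonnegative $f \in L_p(\bR_+,\omega\,d\mu)$ is equivalent to the classical Hardy inequality with weights $(U,V)$ for all nonnegative $g$ (the correspondence $f \leftrightarrow g$ being a bijection between the relevant nonnegative $L_p$-classes). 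Next I would compute Muckenhoupt's characterizing quantity for this pair. Since $V^{-1/(p-1)}(y) = y^{-\alpha(1-p)/(p-1)}\omega(y)^{-1/(p-1)} = y^{\alpha}\omega(y)^{-1/(p-1)}$, we get
\[
\int_0^r V(y)^{-1/(p-1)}\,dy = \int_0^r \omega(y)^{-1/(p-1)} y^\alpha\,dy = \int_0^r \omega(y)^{-1/(p-1)}\,\mu(dy),
\]
and, using $U(y) = y^{-p(\alpha+1)}\omega(y)\,y^\alpha$,
\[
\int_r^\infty U(y)\,dy = \int_r^\infty y^{-p(\alpha+1)}\omega(y)\,\mu(dy).
\]
Hence Muckenhoupt's supremum for $(U,V)$ is exactly $[\omega]_{M_p(\mu)}$, so his theorem gives the equivalence together with the claim that the constant $C$ depends only on $p$ and $[\omega]_{M_p(\mu)}$.

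The only genuine subtlety — and the step I would be most careful about — is the bookkeeping at the endpoints: one must check that the substitution $g = f\mu$ does not change the class of admissible test functions (finiteness of the integrals, the fact that $\omega$ may vanish or blow up so that $U,V$ could be degenerate), and that Muckenhoupt's theorem as stated in \cite{Mucken-72} is applied in the correct form (his result is stated for general nonnegative measurable weights without any a priori local integrability, which is exactly what is needed here, since we only assume $\omega$ measurable). I would remark explicitly that no $A_p$-type hypothesis on $\omega$ is used; $M_p(\mu)$ is precisely the Hardy condition. Everything else is a direct substitution, so the proof is short once the weights are correctly identified.
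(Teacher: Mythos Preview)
Your proof is correct and follows essentially the same approach as the paper: the substitution $g(s)=f(s)s^\alpha$ (the paper writes $F(y)=y^\alpha f(y)$) reduces \eqref{Hardy-ineq} to the classical weighted Hardy inequality with Lebesgue measure, and Muckenhoupt's Theorem~1 in \cite{Mucken-72} then gives the equivalence with the $M_p(\mu)$ condition. Your version is in fact more explicit than the paper's, since you carry out the identification of the weights $(U,V)$ and verify directly that Muckenhoupt's supremum equals $[\omega]_{M_p(\mu)}$.
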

\begin{proof} Let $F(y) = y^\alpha f(y)$ for $y \in \bR_+$. Then, \eqref{Hardy-ineq} is equivalent to
\[
\int_0^\infty \left( y^{\alpha(\frac{1}{p} -1) -1} \omega^\frac{1}{p}(y) \int_0^y F(s) \ ds \right)^p dy \leq C \int_0^\infty \Big[y^{\alpha(\frac{1}{p} -1)} \omega^{\frac{1}{p}} F(y) \Big]^p dy.
\]
The assertion of the lemma then follows directly from \cite[Theorem 1]{Mucken-72}.
\end{proof}
We now introduce the following simple but important lemma in the paper.
\begin{lemma} \label{D-dd-control} Let $\nu \in (0,1), \alpha \in (-1,1)$ and $p ,q \in (1, \infty)$.  Let $\omega (t,x) = \omega_0(t) \omega_1(x') \omega_2(x_d)$ where $\omega_0, \omega_1$ are any weights, and $\omega_{2} \in M_{p}(\mu)$. Suppose that \eqref{ellipticity} and \eqref{extension-type-matrix} hold.  Then for %every $\hat{z} \in \overline{\Omega}_T$ and every
any $R \in (0, \infty]$, if $u \in W^{1,2}_{ q,p}(Q_{R}^+,\omega\,d\mu)$ is a solution of
\[
\left\{
\begin{array}{ccc}
u_t -\mathcal{L} u + \lambda u & = & f  \\
\displaystyle{\lim_{x_d\rightarrow 0^+}} x_d^\alpha D_du(t,x', x_d) & =& 0
\end{array} \quad \text{in} \quad Q_{R}^+
 \right.
 \]
with some $\lambda \ge 0$ and $f \in L_{q, p}(Q_R^+(\hat{z}),  \omega\, d\mu)$,  it holds that
\begin{equation*} % \label{Ddd-u.est}
\begin{split}
&\|D_{d}u/x_d\|_{L_{q,p}(Q_R^+,  \omega\, d\mu)}+ \|D_{d}^2u\|_{L_{q,p}(Q_R^+,  \omega\, d\mu)} \leq  C(d,\nu,p, [\omega_2]_{M_{p}(\mu)}) \left[\|u_t\|_{L_{q,p}(Q_R^+,  \omega\, d\mu)} \right. \\
 & \quad \quad  \left. + \|DD_{x'}u\|_{L_{q,p}(Q_R^+,  \omega\, d\mu)} +  \lambda \|u\|_{L_{q,p}(Q_R^+,  \omega\, d\mu)} + \|f\|_{L_{q,p}(Q_R^+,  \omega\, d\mu)} \right],
\end{split}
\end{equation*}
where $\mu(dx) = x_d^\alpha \ dx$.
\end{lemma}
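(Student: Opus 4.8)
\emph{Proof proposal.} The plan is to use the structural condition \eqref{extension-type-matrix} to recast the equation as a one-dimensional (in $x_d$) differential identity for $x_d^{\alpha} D_d u$, and then to invoke the weighted Hardy's inequality of Lemma \ref{wei-hardy-lemma} in the $x_d$-variable.

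First I would isolate the pure normal second derivative. By \eqref{extension-type-matrix} we have $a_{dj}=0$ for $j<d$, so $\mathcal{L}u = a_{ij}D_{ij}u + \frac{\alpha}{x_d}a_{dd}D_d u$, and the equation can be rewritten as
\[
a_{dd}\Big(D_{dd}u + \frac{\alpha}{x_d}D_d u\Big) \;=\; u_t + \lambda u - f \;-\!\!\!\sum_{(i,j)\neq(d,d)}\!\!\! a_{ij}D_{ij}u \;=:\; g.
\]
Here, again using $a_{dj}=0$ for $j<d$, each surviving term $a_{ij}D_{ij}u$ with $(i,j)\neq(d,d)$ has $\min\{i,j\}\le d-1$, hence $D_{ij}u$ is one of the entries of $DD_{x'}u$; therefore by \eqref{ellipticity},
\[
\|g\|_{L_{q,p}(Q_R^+,\omega\,d\mu)} \le \|u_t\|_{L_{q,p}(Q_R^+,\omega\,d\mu)} + \lambda\|u\|_{L_{q,p}(Q_R^+,\omega\,d\mu)} + \|f\|_{L_{q,p}(Q_R^+,\omega\,d\mu)} + C(d,\nu)\|DD_{x'}u\|_{L_{q,p}(Q_R^+,\omega\,d\mu)}.
\]
Since $a_{dd}\ge\nu$ by \eqref{ellipticity}, setting $h:=g/a_{dd}=D_{dd}u + \frac{\alpha}{x_d}D_d u$ gives $\|h\|_{L_{q,p}(Q_R^+,\omega\,d\mu)}\le\nu^{-1}\|g\|_{L_{q,p}(Q_R^+,\omega\,d\mu)}$.

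Next I would exploit the identity $D_d\big(x_d^{\alpha}D_d u\big) = x_d^{\alpha}\big(D_{dd}u + \frac{\alpha}{x_d}D_d u\big) = x_d^{\alpha}h$, valid (weakly) in $x_d$ for a.e.\ $(t,x')$. Integrating in $x_d$ from $0$ and using the boundary condition $\lim_{x_d\to0^+}x_d^{\alpha}D_d u = 0$ yields
\[
\frac{D_d u(t,x',x_d)}{x_d} \;=\; \frac{1}{x_d^{\alpha+1}}\int_0^{x_d} h(t,x',s)\,\mu(ds),
\]
so that $|D_d u/x_d| \le x_d^{-(\alpha+1)}\int_0^{x_d}|h|\,\mu(ds)$. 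I would then apply Lemma \ref{wei-hardy-lemma} with the weight $\omega_2\in M_{p}(\mu)$, slicewise in $(t,x')$, to the nonnegative function $|h(t,x',\cdot)|$ (extended by zero beyond $x_d=R$ when $R<\infty$, which is harmless since the inner integral only sees $s<x_d<R$), to obtain for a.e.\ $(t,x')$
\[
\int_0^R \Big|\tfrac{D_d u}{x_d}(t,x',x_d)\Big|^{p}\,\omega_2(x_d)\,\mu(dx_d) \;\le\; C\big(p,[\omega_2]_{M_{p}(\mu)}\big)\int_0^R |h(t,x',x_d)|^{p}\,\omega_2(x_d)\,\mu(dx_d).
\]
Raising to the appropriate power and integrating first against $\omega_1(x')\,dx'$ and then against $\omega_0(t)\,dt$ (no structural assumption on $\omega_0,\omega_1$ is needed) gives $\|D_d u/x_d\|_{L_{q,p}(Q_R^+,\omega\,d\mu)}\le C(p,[\omega_2]_{M_{p}(\mu)})\,\|h\|_{L_{q,p}(Q_R^+,\omega\,d\mu)}$. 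Finally, from $D_{dd}u = h - \frac{\alpha}{x_d}D_d u$ with $|\alpha|<1$, the same bound (with a modified constant) holds for $\|D_{dd}u\|_{L_{q,p}(Q_R^+,\omega\,d\mu)}$; combining with the estimate for $\|g\|$ yields the asserted inequality.

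The step I expect to be the main obstacle is the rigorous justification of the integral representation of $D_d u/x_d$ within the weighted class $W^{1,2}_{q,p}(Q_R^+,\omega\,d\mu)$: one must make sense of the trace $\lim_{x_d\to0^+}x_d^{\alpha}D_d u$ and of the fundamental theorem of calculus for $x_d^{\alpha}D_d u$ (whose weak $x_d$-derivative equals $x_d^{\alpha}h$), in particular verifying the necessary local integrability near $x_d=0$ when $\alpha<0$. I would handle this either by a density/approximation argument (regularizing $u$, deriving the bound for the regularizations, and passing to the limit using the a priori estimate above), or by working directly from the weak formulation in which test functions need not vanish at $x_d=0$, so that the boundary condition is encoded and the identity $D_d(x_d^{\alpha}D_d u)=x_d^{\alpha}h$ with vanishing trace is available.
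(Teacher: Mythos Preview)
Your proposal is correct and follows essentially the same approach as the paper's proof: both rewrite the equation to obtain the identity $D_d(x_d^{\alpha}D_d u)=x_d^{\alpha}h$ (with $|h|$ controlled by $F:=|f|+\lambda|u|+|u_t|+|DD_{x'}u|$ via \eqref{ellipticity} and \eqref{extension-type-matrix}), integrate in $x_d$ using the boundary condition, apply the weighted Hardy inequality of Lemma~\ref{wei-hardy-lemma} slicewise in $x_d$, and then integrate in $(t,x')$; the bound on $D_{dd}u$ is then read off from $D_{dd}u=h-\tfrac{\alpha}{x_d}D_d u$. Your additional remark on justifying the fundamental-theorem-of-calculus step near $x_d=0$ is a reasonable technical point that the paper treats implicitly.
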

\begin{proof} Note that
\[
D_d(x^\alpha_d D_d u) =x^\alpha_d \Big[ \frac{\alpha}{x_d} D_{d}u + D_{d}^2 u \Big].
\]
From this, the conditions \eqref{ellipticity} and \eqref{extension-type-matrix}, and the equation of $u$, we obtain
\[
|D_{d}(x^\alpha_d D_du)| \leq C(d, \nu) x_d^\alpha F, \quad \text{where} \quad F= |f| + \lambda |u| + |u_t| + |DD_{x'}u| .
\]
This estimate together with the boundary condition of $x_d^\alpha D_du$ yields
\[
\begin{split}
& \int_{D_R^+} \Big| \frac{ D_d u}{x_d}\Big |^p x_d^{\alpha} \omega_1(x')\omega_2(x_d) \ dx \\
& \le C(d,\nu)\int_{D_R^+} \left(\frac  1 {x_d^{\alpha+1}}
\int_{0}^{x_d}s^\alpha |F(z', s)| \ ds\right)^p x_d^{\alpha} \omega_1(x')  \omega_2(x_d)\ dx.
\end{split}
\]
Note that from the weighted Hardy's inequality in Lemma \ref{wei-hardy-lemma}, we have
\[
\begin{split}
& \int_0^{R} \left(\frac  1 {x_d^{\alpha+1}}
\int_{0}^{x_d}s^\alpha |F(z', s)| \ ds\right)^p x_d^{\alpha}  \omega_2(x_d) \ dx_d \\
& \le C(p, [\omega_2]_{M_{p}(\mu)}) \int_{0}^{R}  |F(z', x_d)|^p x_d^\alpha \omega_2(x_d) \ dx_d.
\end{split}
\]
From the above two inequalities, it follows that
\[
\begin{split}
\int_{D_R^+} \Big| \frac{D_d u}{x_d}\Big |^p x_d^{\alpha} \omega_1(x') \omega_2(x_d) \ dx  \leq C(d,\nu,p,[\omega_2]_{M_{p}(\mu)}) \int_{D_R^+} |F(z', x_d)|^p x_d^{\alpha} \omega_1(x')\omega_2(x_d) \ dx.
\end{split}
\]
This and an integration in the time variable imply the estimate of $D_du/x_d$.
To estimate $D_{d}^2 u$, we use the equation again to derive
\[
|D_{d}^2u|  \le C(d,\nu) \Big[ F + |D_du/x_d|\Big].
\]
From this, the estimate of $D_{d}^2 u$  follows and the proof is completed.
\end{proof}
\subsection{Muckenhoupt weights and weighted Fefferman-Stein theorem}

We start with recalling the definition of classes of Muckenhoupt weights which was introduced in \cite{Muck}.
\begin{definition} Let $\alpha \in (-1,1)$ and $\mu(y) = |y|^\alpha$ for $y \in \bR\setminus \{0\}$. For each $p \in [1, \infty)$ and each $d \in \bN$, a locally integrable function $\omega : \bR^d \rightarrow \bR_+$ is said to be in $A_p(\bR^d, \mu)$ Muckenhoupt class of weights if and only if $[\omega]_{A_p(\bR^d, \mu)} < \infty$, where
\begin{equation*}
[\omega]_{A_p(\bR^d, \mu)} = \left\{
\begin{array}{lll}
& \displaystyle{\sup_{\rho >0,x \in \bR^d} \left[\fint_{B_\rho(x)} \omega(y)\, \mu(dy) \right]\left[\fint_{B_\rho(x)} \omega(y)^{\frac{1}{1-p}}\, \mu(dy) \right]^{p-1}} & \,\, \text{if}\ p \in (1, \infty), \\
& \displaystyle{\sup_{\rho >0,x \in \bR^d} \left[\fint_{B_\rho(x)} \omega(y)\, \mu(dy) \right] \norm{\frac{1}{\omega}}_{L^\infty(B_\rho(x))}}, & \,\, \text{if} \ p =1.
\end{array} \right.
\end{equation*}
If $\mu$ is a Lebesgue measure (i.e. $\alpha =0$), we just simply write $A_p(\bR^d) = A_p(\bR^d, \mu)$. Observe that if $\omega \in A_p(\bR)$, then $\tilde{\omega} \in A_p(\bR^d)$ with $[\omega]_{A_p(\bR)} = [\tilde{\omega}]_{A_p(\bR^d)}$, where $\tilde{\omega}(x) = \omega(x_n)$ for $x = (x', x_n) \in \bR^d$. Sometimes, if the context is clear, we neglect the spacial domain and only write $\omega \in A_p$.
\end{definition}
\noindent
\begin{remark} \label{A-B-weight} Let $\alpha \in (-1,1)$, $p \in (1,\infty)$, and $\beta\in (-\alpha-1, (\alpha+1)(p-1))$. Let $\omega(y) = y^\beta$ for $y \in (0,\infty)$. Then, it is easily seen that $\omega \in M_{p}(\mu) \cap A_p(\bR_+, \mu)$.
\end{remark}

%We also need to recall several definitions and analysis estimates.
Let us denote the collection of parabolic cylinders in $\Omega_T = (-\infty, T] \times \bR^d \times \bR_+$ by
\[
\mathcal{Q} = \{ Q_\rho^+(z): \rho >0, z \in \Omega_T \}.
\]
For any locally integrable function $f$ defined in $\Omega_T$, the Hardy-Littlewood maximal function of  $f$ is defined by
\[
\mathcal{M}(f)(z) = \sup_{Q \in \mathcal{Q}, z \in Q}\fint_{Q} |f(\xi)| \ \mu(d\xi).
\]
Moreover, the Fefferman-Stein sharp function of $f$ is defined by
\begin{equation} \label{sharp-def}
f^{\#}(z) =  \sup_{Q \in \mathcal{Q},z  \in Q} \fint_{Q}|f(\xi) - (f)_Q| \ \mu(d\xi),
\quad \text{where}\,\,(f)_Q=\fint_{Q} |f(\xi)| \ \mu(d\xi).
\end{equation}
 The following version of weighted and mixed-norm Fefferman-Stein theorem and Hardy-Littlewood maximal function theorem are established in \cite{Dong-Kim-18} and they are needed in the paper.
\begin{theorem}  \label{FS-thm} Let $p, q \in (1,\infty)$. Suppose that $\omega_0\in A_q(\bR)$, $\omega_1 \in A_p(\bR^{d-1})$, $\omega_2 \in  A_p(\bR_+, \mu)$ with
$$
[\omega_0]_{A_q}, \quad [\omega_1]_{A_p} ,  \quad [\omega_2]_{A_p(\bR_+, \mu)}\le K.$$
Then, for every $f \in L_{q, p}(\Omega_T, \omega\, d\mu)$,
\begin{equation} \label{Maximal-L-p}
\begin{split}
& \|f\|_{L_{q, p}(\Omega_T,  \omega\, d\mu)} \leq C(d, q, p, K) \| f^{\#}\|_{L_{q,p}(\Omega_T,  \omega\, d \mu)} \quad \text{and} \quad \\
& \|\mathcal{M}(f)\|_{L_{q,p}(\Omega_T, \omega\, d\mu)} \leq C(d, q, p, K) \|f\|_{L_{q, p}(\Omega_T,  \omega \ d \mu)}.
\end{split}
\end{equation}
\end{theorem}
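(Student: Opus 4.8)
The plan is to deduce Theorem~\ref{FS-thm} from the corresponding \emph{single-norm, single-weight} estimates on a space of homogeneous type, together with a mixed-norm Rubio de Francia extrapolation argument. First observe that, discarding the harmless restriction $t\le T$, the set $\bR\times\bR^d_+$ equipped with the parabolic distance $\varrho\big((t,x),(s,y)\big)=|x-y|+|t-s|^{1/2}$ and the measure $d\mu=x_d^\alpha\,dx\,dt$ is a space of homogeneous type: since $\alpha\in(-1,1)$, the measure $x_d^\alpha\,dx_d$ on $\bR_+$ is doubling --- both for intervals bounded away from $\{x_d=0\}$ and for intervals reaching $\{x_d=0\}$, where $\int_0^r s^\alpha\,ds\asymp r^{1+\alpha}$ --- hence so is $d\mu$, with doubling constant depending only on $d$ and $\alpha$, and the cylinders $Q_\rho^+(z)$ and $D_\rho^+(x)$ are comparable (up to the immaterial one-sidedness in $t$) to $\varrho$-balls. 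On any such space the following are classical: (i) for every $p_0\in(1,\infty)$ and every $W\in A_{p_0}(\mu)$, the maximal operator $\mathcal M$ is bounded on $L^{p_0}(W\,d\mu)$ with norm controlled by $[W]_{A_{p_0}(\mu)},d,\alpha,p_0$; and (ii) for every $W\in A_\infty(\mu)$ --- in particular every $W\in A_{p_0}(\mu)$, with constant controlled by $[W]_{A_{p_0}(\mu)}$ --- the Fefferman--Stein inequality $\|g\|_{L^{p_0}(W\,d\mu)}\le C\,\|g^{\#}\|_{L^{p_0}(W\,d\mu)}$ holds, proved via the good-$\lambda$ inequality comparing $g^{\#}$ with a dyadic maximal function and using the $A_\infty$ property.

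For the maximal-function bound in \eqref{Maximal-L-p}, I would use the pointwise majorization
\[
\mathcal M(f)(t,x',x_d)\ \le\ C\,\mathcal M_t\Big(\mathcal M_{x'}\big(\mathcal M_{x_d}f\big)\Big)(t,x',x_d),
\]
where $\mathcal M_{x_d}$, $\mathcal M_{x'}$, $\mathcal M_t$ are the one-group Hardy--Littlewood maximal operators acting in $x_d$ (with respect to $x_d^\alpha\,dx_d$), in $x'$, and in $t$, each with the remaining variables frozen; this holds because $d\mu$ is a product measure, every parabolic cylinder is a product of a time interval, a ball $B'_\rho(x')$ in $\bR^{d-1}$, and a one-sided interval in $x_d$, and decoupling the three radii only enlarges the supremum. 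By Tonelli one integrates the variables out one group at a time, applying in turn the weighted $L^p$-bound for $\mathcal M_{x_d}$ ($\omega_2\in A_p(\bR_+,\mu)$), the weighted $L^p$-bound for $\mathcal M_{x'}$ ($\omega_1\in A_p(\bR^{d-1})$), and the weighted $L^q$-bound for $\mathcal M_t$ ($\omega_0\in A_q(\bR)$); the interchange of the norms is handled by Minkowski's inequality when the exponents are favorably ordered, and, in general and more cleanly, by applying mixed-norm Rubio de Francia extrapolation directly to $\mathcal M$ starting from its single-weight bound (i). This yields the second inequality in \eqref{Maximal-L-p}.

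For the sharp-function bound --- the harder half, since $f^{\#}$ does not factor variable-by-variable the way $\mathcal M$ does --- extrapolation seems unavoidable. Let $\mathcal F$ be the family of all pairs $(|f|,f^{\#})$ with $f$ bounded and compactly supported (a dense subclass of $L_{q,p}(\Omega_T,\omega\,d\mu)$); by (ii), for every $p_0\in(1,\infty)$ and $W\in A_{p_0}(\mu)$ one has $\|F\|_{L^{p_0}(W\,d\mu)}\le C\big([W]_{A_{p_0}(\mu)}\big)\,\|G\|_{L^{p_0}(W\,d\mu)}$ for all $(F,G)\in\mathcal F$. Feeding this into the mixed-norm Rubio de Francia extrapolation theorem for families of pairs on a space of homogeneous type, applied with the three variable groups $t$, $x'$, $x_d$ --- the last living on $(\bR_+,x_d^\alpha\,dx_d)$ --- yields $\|F\|_{L_{q,p}(\Omega_T,\omega\,d\mu)}\le C(d,p,q,K)\,\|G\|_{L_{q,p}(\Omega_T,\omega\,d\mu)}$ for all $(F,G)\in\mathcal F$, where $\omega=\omega_0\otimes\omega_1\otimes\omega_2$. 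A routine truncation and density argument removes the restriction on $f$ and gives the first inequality in \eqref{Maximal-L-p}.

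The step I expect to be the main obstacle is precisely this extrapolation: one needs a version of Rubio de Francia extrapolation valid simultaneously (a) on the space of homogeneous type $\big(\bR\times\bR^d_+,\varrho,x_d^\alpha\,dx\,dt\big)$ rather than on Euclidean space with Lebesgue measure; (b) for families of pairs, so that the sharp-function inequality --- not an operator bound --- is covered; and (c) in the mixed-norm, product-weight form. All three ingredients exist in the literature, but they must be stated and combined carefully, and one must check that the doubling constant of $d\mu$ and the constants in the $A_{p_0}(\mu)$ and $A_\infty(\mu)$ comparisons used along the way are uniform in $d,\alpha,p,q$ and $K$. By comparison, the pointwise majorization for $\mathcal M$, the one-group weighted maximal bounds, and the density reduction are routine.
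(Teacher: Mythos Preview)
The paper does not prove Theorem~\ref{FS-thm} at all: it simply states the result and attributes it to \cite{Dong-Kim-18} (Dong--Kim, \emph{Trans.\ Amer.\ Math.\ Soc.}\ 370 (2018), 5081--5130), where the weighted mixed-norm Fefferman--Stein and Hardy--Littlewood theorems are established in the generality needed here. So there is no ``paper's own proof'' to compare against --- only a citation.

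That said, your sketch is a reasonable outline of how such a result is actually proved, and is in the spirit of the cited reference. The iterated-maximal-function argument for the second inequality in \eqref{Maximal-L-p} is correct in this setting because the cylinders $Q_\rho^+(z)$ are genuine products (time interval $\times$ ball in $x'$ $\times$ interval in $x_d$) and $d\mu$ is a product measure; your remark that Minkowski alone fails when $q<p$, forcing one to go through extrapolation, is exactly right. For the sharp-function inequality you have correctly identified both the method (single-weight Fefferman--Stein on the space of homogeneous type $(\bR\times\bR^d_+,\varrho,d\mu)$, then mixed-norm Rubio de Francia extrapolation for families of pairs) and the genuine technical point: one needs a version of extrapolation that simultaneously handles (a) a non-Euclidean doubling measure in one of the variable groups, (b) pairs rather than sublinear operators, and (c) the product-weight mixed-norm conclusion. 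This is precisely what \cite{Dong-Kim-18} supplies, so in a self-contained write-up you would either cite that paper for the extrapolation step or reproduce its argument. Your density/truncation reduction at the end is also needed, since the good-$\lambda$ proof of the single-weight Fefferman--Stein inequality typically requires an a priori finiteness assumption on $\|f\|$.
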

\noindent
%Observe that it follows directly from the definitions that
%\[
%f^{\#}(z) \leq 2 \mathcal{M}(f)(z), \quad \text{for a.e.} \ z \ \in \Omega_T.
%\]
%Consequently, from \eqref{Maximal-L-p}, we have
%\begin{equation} \label{sharp-L-p}
%\|f^{\#}\|_{L_{q,p}(\Omega_T,  \omega\, d\mu)} \leq 2 \|\mathcal{M}(f)\|_{L_{q, p}(\Omega_T, \omega\, d  \mu)} \leq C(d,\alpha, p)\|f\|_{L_p(\Omega_T,  \mu)}.
%\end{equation}

\section{Equations with coefficients depending only on $x_d$-variable} \label{simple-coeffients}

This section studies equations with singular coefficients that depend only on the $x_d$-variable.  %This section is divided into two subsections. In the fist subsection,
We establish Lipschitz estimates for solutions to  homogeneous equations and for their higher order derivatives. From these Lipschitz estimates, we derive mean oscillation estimates for certain derivatives of the solutions.  We then state and prove $W^{1,2}_p$-regularity for solutions to non-homogeneous equations. These results
%Both of our Lipschitz regularity estimates and $W^{1,2}_p$-regularity estimates are completely new. They
will be used in the next section when we study equations with singular partially weighted VMO coefficients.

Let $T \in (-\infty, \infty]$ and let $(a_{ij})_{d \times d} : \bR_+ \rightarrow \bR^{d\times d}$ be a bounded matrix-valued function which is uniformly elliptic, and let $\alpha \in (-1,1)$. Also, let $a_0, c_0: \bR_+ \rightarrow \bR$ be measurable functions satisfying
\begin{equation}  \label{a-b.zero}
\nu \leq a_0(x_d), \ c_0(x_d) \leq \nu^{-1} \quad \text{for a.e.} \quad x_d \in \bR_+.
\end{equation}
We denote %$\Omega_T = (-\infty, T] \times \bR^{d}_+$ and
\[
\mathcal{L}_0 u(t, x) =  a_{ij}(x_d) D_{ij} u(t, x', x_d) + \frac{\alpha}{x_d} a_{dj} D_j u(t, x', x_d), \quad (t, x)  = (t, x', x_d) \in \Omega_T,
\]
and consider the equation
\begin{equation} \label{constant-L-0}
\left\{
\begin{array}{ccc}
a_0(x_d) u_t - \mathcal{L}_0 u(t, x)  + \lambda  c_0(x_d) u& = & f(t,x) \\
\displaystyle{\lim_{x_d \rightarrow 0^+} x_d^\alpha a_{dj}(x_d) D_{j} u(t, x', x_d)} & = & 0
\end{array} \quad \text{in} \quad \Omega_T \right.
\end{equation}
where $\lambda > 0$ is fixed. In addition to the assumption that the matrix $(a_{ij})_{i, j =1}^d$ is uniformly elliptic and bounded, we assume that $a_{dj}/a_{dd}, j = 1, 2,\ldots, d-1$ are constant functions and therefore they are independent on $x_d$-variable. %In particular, by a change of variable in $t$, we may assume without loss of generality that
Dividing both sides of the equation by $a_{dd}$, we may assume that
\begin{equation*} %\label{a-dd.cond}
a_{dj}(x_d) = a_{dj} \quad \text{and} \quad a_{dd}(x_d) =1, \quad \forall \ x_d \in \bR_+, \quad j = 1, 2,\ldots, d-1.
\end{equation*}
Note also that by the change of variables, $y_j=x_j-a_{dj}x_d,j=1,2,\ldots,d-1$ and $y_d=x_d$, without loss of generality, we may assume that $a_{dj} =0$ for $j = 1,2,\ldots, d-1$ as in \eqref{extension-type-matrix}.  In the remaining part of this section, we assume that
\begin{equation} \label{a-dd.cond}
a_{dj}(x_d) = 0 \quad \text{and} \quad a_{dd}(x_d) =1, \quad \forall \ x_d \in \bR_+, \quad j = 1, 2,\ldots, d-1.
\end{equation}
Observe that under the condition \eqref{a-dd.cond}, there is a hidden divergence structure for the operator $\mathcal{L}_0$. Namely,
\[
x_d^\alpha \mathcal{L}_0 u (t, x) = D_i[x_d^\alpha a_{ij}(x_d) D_j u(t, x) ].
\]
Consequentially,  the equation \eqref{constant-L-0} can be rewritten %as
%\begin{equation} \label{nondiv-L-0}
%\left\{
%\begin{array}{cccl}
%a_0(x_d)u_t - a_{ij}(x_d)D_{ij} u(x',x_d)  + \lambda x_d^\alpha \textcolor{red}{c_0(x_d)} u& = & f(t,x),  \\
%\displaystyle{\lim_{x_d \rightarrow 0^+} x_d^\alpha U(t,x', x_d)}& = & 0 % \\u(0, x) & = & 0, & \quad  x \in \bR^{d}_+.
%\end{array} \quad \text{in} \quad \Omega_T,\right.
%\end{equation}
%or
in divergence form as
\begin{equation} \label{div-L-0}
\left\{
\begin{array}{cccl}
x_d^{\alpha} a_0(x_d)u_t - D_i[x_d^\alpha a_{ij}(x_d) D_j u(x',x_d)]  + \lambda x_d^\alpha c_0(x_d) u& = & x_d^\alpha f(t,x),  \\
\displaystyle{\lim_{x_d \rightarrow 0^+} x_d^\alpha D_d u (t,x', x_d)}& = & 0 % \\u(0, x) & = & 0, & \quad  x \in \bR^{d}_+.
\end{array} \quad \text{in} \quad \Omega_T. \right.
\end{equation}
\subsection{Lipschitz estimates for homogeneous equations}  We derive several Lipschitz estimates and oscillation estimates for solutions and their certain derivatives. We start with the following local interior and boundary $L_2$ estimates for solutions. %For reader's convenience, we recall that for each $x= (x', x_{d}) \in \bR^{d}_+$ and for each $\rho >0$, we write
%\[
%D_\rho(x) = B_\rho'(x') \times (x_{d} -\rho, x_{d} + \rho), \quad D_\rho^+(x) =  B_\rho'(x') \times ((x_{d} -\rho)^+, x_{d} + \rho),
%\]
%where $B_\rho'(x')$ is the ball in $\bR^{d-1}$  of radius $\rho$ and centered at $x'$.
%%, and $a^+ = \max\{0, a\}$ for every real number $a$.
%Moreover, for each $z = (t, x)=(z',x_d) \in \bR \times \bR^n$, the parabolic cylinder and half-parabolic cylinder in $\bR \times \bR^n$ of radius $\rho$ centered at $z$ is denoted by
%$$Q_\rho(z) = (t - \rho^2, t] \times D_\rho(x), \quad Q_\rho^+(z) = (t- \rho^2, t] \times D^+_\rho(x).
%$$
%We also write the parabolic cylinders in $\bR \times \bR^{d-1}$ as $$Q'_\rho(z') = (t - \rho^2, t] \times B'_\rho(x').$$

%We also denote $\partial_p Q_\rho(z_0)$ is the parabolic boundary of the parabolic cyllinder $Q_\rho(z_0)$. We also denote
%\[
%\begin{split}
%& \partial_s Q_\rho^+(z_0) = (t_0 -\rho_0^2, t_0] \times \Big( \partial D_\rho^+ (x_0) \setminus (B_\rho'(x_0') \times \{0\}) \Big), \\
%&  \partial_b Q_\rho^+(z_0) = (t_0 -\rho_0^2, t_0] \times \Big( D_\rho^+ (x_0) \cap (B_\rho'(x_0') \times \{0\}) \Big).
%\end{split}
%\]

\begin{lemma}[Local $L_2$-estimates] \label{loc-L2.est-lemma} Let $\nu \in (0, 1]$, $\lambda \geq 0$, and $\alpha \in (-1,1)$ be fixed, and let $\mu(s) = |s|^{\alpha}$ for  $s \in \bR \setminus\{0\}$. Also, let $\hat{z} = (\hat{t}, \hat{x}', \hat{x}_d) \in \bR \times \bR^{d-1} \times \overline{\bR_+}$, and %let $(a_{ij})_{d\times d}: ((\hat{x}_d -2)^+, \hat{x}_d + 2) \rightarrow \bR^{d\times d}$ be a bounded measurable matrix satisfying
assume that  \eqref{ellipticity}, \eqref{a-b.zero}, and \eqref{a-dd.cond} hold on $((\hat{x}_d -2)^+, \hat{x}_d + 2)$. Then, the following statements hold.
\begin{itemize}
\item[\textup{(i)}] If $\hat{x}_d \geq 2$ and $u \in W^{1,2}_2(Q_{2}(\hat{z}))$ is a solution of
\begin{equation} \label{int-Q4-constant-L-0}
 a_0(x_d)u_t - \mathcal{L}_0 u(t, x) +\lambda c_0(x_d) u   =  0\quad  \text{in}\quad Q_{2}(\hat{z}).
\end{equation}
then \begin{equation*}  %\label{inter-eq3.27}
\sup_{\tau \in (\hat{t}-1, \hat{t}] }\int_{D_1(\hat{x}) } u^2(\tau,x) \,\mu(dx)
 + \int_{Q_1(\hat{z})}  (|Du|^2 +\lambda u^2) \,{\mu}(dz) \leq C(d, \nu) \int_{Q_2(\hat{z})} u^2 \,{\mu}(dz),
\end{equation*}
and for any $k, j  \in \mathbb{N} \cup \{0\}$,
\begin{equation*} %\label{inter-eq3.50}
 \int_{Q_1(\hat{z}) }|\partial_t^{j+1}u |^2 \,{\mu}(dz) + \int_{Q_1(\hat{z})} |DD_{x'}^k \partial_t^ju |^2 \,{\mu}(dz) \le C(d,k, j, \nu)\int_{Q_2 (\hat{z})}  (|Du|^2  +\lambda u^2) \,{\mu}(dz).
\end{equation*}
\item[\textup{(ii)}]If $\hat{x}_d =0$ and $u \in W^{1,2}_2(Q_{2}^+(\hat{z}))$ is a solution of
\begin{equation} \label{bdr-Q4-constant-L-0}
\left\{
\begin{array}{ccc}
 a_0(x_d) u_t - \mathcal{L}_0 u(t, x)  + \lambda c_0(x_d) u & = & 0 \\
\displaystyle{\lim_{x_d \rightarrow 0^+} x_d^\alpha  D_d u(t, x', x_{d})} & = & 0
\end{array} \quad \text{in} \quad Q_{2}^+(\hat{z}). \right.
\end{equation}
then \begin{equation*}  %\label{inter-eq3.27}
\sup_{\tau \in (\hat{t}-1, \hat{t}] }\int_{D_1^+(\hat{x}) } u^2(\tau,x) \,\mu(dx)
 + \int_{Q_1^+(\hat{z})}  (|Du|^2 +\lambda u^2) \,{\mu}(dz)  \leq C(d, \nu) \int_{Q_2^+(\hat{z})} u^2 \,{\mu}(dz),
\end{equation*}
and for any $k, j  \in \mathbb{N} \cup \{0\}$,
\begin{equation*} %\label{inter-eq3.50}
 \int_{Q_1^+(\hat{z}) }|\partial_t^{j+1}u |^2 \,{\mu}(dz) + \int_{Q_1^+(\hat{z})} |DD_{x'}^k \partial_t^ju |^2 \,{\mu}(dz) \le C(d,k, j, \nu)\int_{Q_2^+(\hat{z})}  (|Du|^2 +\lambda u^2) \,{\mu}(dz).
\end{equation*}
\end{itemize}
\end{lemma}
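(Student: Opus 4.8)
The plan is to exploit the hidden divergence form \eqref{div-L-0}, which holds precisely because of \eqref{a-dd.cond}, and then to run the standard Caccioppoli--energy machinery with the weight $\mu(x_d)=x_d^\alpha$ in place of Lebesgue measure. Since $\alpha\in(-1,1)$ the weight $x_d^\alpha$ is an $A_2$ Muckenhoupt weight, so all the classical manipulations (Poincaré, Sobolev, interpolation) go through with constants depending only on $d$, $\nu$, and $\alpha$. I will treat case (ii) in detail and remark that (i) is the easier interior version obtained by the same argument with $D_\rho$ in place of $D_\rho^+$ and no boundary term to worry about (the equation is non-degenerate away from $\{x_d=0\}$ there, but writing it in divergence form and keeping the weight costs nothing).

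First I would prove the zeroth-order Caccioppoli inequality. Multiply the divergence-form equation in \eqref{div-L-0} by $u\,\zeta^2$, where $\zeta=\zeta(t,x)$ is a smooth cutoff equal to $1$ on $Q_1^+(\hat z)$, supported in $Q_{3/2}^+(\hat z)$, and integrate over $Q_{3/2}^+(\hat z)$ against $dx\,dt$ (note that the weight $x_d^\alpha$ is already built into the divergence-form equation, so this is an unweighted integration of a weighted quantity). The conormal boundary condition $\lim_{x_d\to0^+}x_d^\alpha D_du=0$ kills the boundary term arising from integration by parts in $x_d$, so no boundary integral on $\{x_d=0\}$ survives. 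Using ellipticity \eqref{ellipticity} to bound $\int x_d^\alpha a_{ij}D_ju\,D_i(u\zeta^2)$ from below by $\tfrac\nu2\int x_d^\alpha|Du|^2\zeta^2 - C\int x_d^\alpha u^2|D\zeta|^2$, using \eqref{a-b.zero} for the $u_t$ and the $\lambda$ terms, and estimating $\int x_d^\alpha a_0 u_t\,u\zeta^2 = \tfrac12\partial_t\int x_d^\alpha a_0 u^2\zeta^2 - \tfrac12\int x_d^\alpha a_0 u^2\partial_t(\zeta^2)$ (again using $\nu\le a_0\le\nu^{-1}$), one obtains after absorbing and taking the supremum in $\tau$ the first displayed estimate:
\[
\sup_{\tau\in(\hat t-1,\hat t]}\int_{D_1^+(\hat x)}u^2(\tau,x)\,\mu(dx)+\int_{Q_1^+(\hat z)}(|Du|^2+\lambda u^2)\,\mu(dz)\le C(d,\nu)\int_{Q_2^+(\hat z)}u^2\,\mu(dz).
\]

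For the higher-order estimate I would use the key structural fact that the coefficients depend only on $x_d$, so the operator commutes with $\partial_t$ and with the tangential derivatives $D_{x'}$. Hence if $u$ solves \eqref{bdr-Q4-constant-L-0} on $Q_2^+(\hat z)$, then $v=D_{x'}^k\partial_t^j u$ solves the same homogeneous equation with the same conormal boundary condition on $Q_r^+(\hat z)$ for any $r<2$ (difference quotients in $t$ and $x'$ respect both the equation and the flat boundary, and one passes to the limit using the interior regularity implied by the $L_2$ bound above; no extra boundary term is created because differentiating tangentially preserves the boundary condition). Applying the zeroth-order Caccioppoli estimate just proved, iterated on a chain of shrinking cylinders between radius $2$ and radius $1$, to $v$ and to its time derivatives, and then reading $\partial_t^{j+1}u$ off the equation itself as $\partial_t^{j+1}u = a_0^{-1}(\mathcal L_0\partial_t^j u - \lambda c_0\partial_t^j u)$—whose right-hand side involves only $D^2$ of $\partial_t^j u$ and $D_d(\partial_t^j u)/x_d$, the latter controlled by Lemma~\ref{D-dd-control} type reasoning or directly here by the divergence structure—yields
\[
\int_{Q_1^+(\hat z)}|\partial_t^{j+1}u|^2\,\mu(dz)+\int_{Q_1^+(\hat z)}|DD_{x'}^k\partial_t^j u|^2\,\mu(dz)\le C(d,k,j,\nu)\int_{Q_2^+(\hat z)}(|Du|^2+\lambda u^2)\,\mu(dz).
\]

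**Main obstacle.** The routine parts are the cutoff bookkeeping and the absorption. The one genuinely delicate point is justifying that $v=D_{x'}^k\partial_t^j u$ is again an admissible solution \emph{up to the boundary} $\{x_d=0\}$, i.e.\ that the conormal condition $\lim_{x_d\to0^+}x_d^\alpha D_d v=0$ passes to difference quotients and limits; this is where the flatness of the boundary and the $x_d$-only dependence of the coefficients are both essential, and it must be argued carefully (e.g.\ via the divergence/weak formulation, where the boundary condition is encoded as the absence of a boundary integral, a property manifestly stable under tangential difference quotients). I also need to be mildly careful that all weighted integrals are finite at each stage, which follows because $x_d^\alpha\in A_2$ guarantees local integrability and because the $L_2$ bound bootstraps interior smoothness; I would note this once and not belabor it. The interior case (i) is strictly simpler: there is no boundary, $x_d$ ranges in $[\hat x_d-2,\hat x_d+2]\subset(0,\infty)$ with $\hat x_d\ge2$, so $x_d^\alpha$ is bounded above and below on the relevant region and the estimates are just the classical parabolic ones with harmless weight factors.
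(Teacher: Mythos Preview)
Your overall strategy is correct and is precisely what the paper invokes: it observes that under \eqref{a-dd.cond} the equation has the hidden divergence form \eqref{div-L-0} and then simply cites \cite[Lemmas~4.5 and~4.16]{Dong-Phan} for the weighted Caccioppoli and higher-order estimates. Your sketch is essentially a reconstruction of that cited argument, so in spirit the approaches coincide.

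There is, however, a genuine gap in your route to the time-derivative bound. You propose to read $\partial_t^{j+1}u$ off the equation as $a_0^{-1}(\mathcal L_0\partial_t^j u-\lambda c_0\partial_t^j u)$ and then control the right-hand side. But $\mathcal L_0\partial_t^j u$ contains $D_d^2\partial_t^j u$ and $x_d^{-1}D_d\partial_t^j u$, and the only tool you cite for these---Lemma~\ref{D-dd-control}---puts $\|\partial_t^{j+1}u\|$ itself on its right-hand side. So the argument is circular: you need $u_t$ to control $D_d^2u$ and $D_du/x_d$, and you need $D_d^2u$ and $D_du/x_d$ to control $u_t$. Applying the basic Caccioppoli inequality to $u_t$ only yields $\int\mu|Du_t|^2\le C\int\mu\,u_t^2$, not the reverse direction you need.

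The standard fix (and what the argument in \cite{Dong-Phan} actually does) is to test the divergence-form equation for $w=\partial_t^j u$ against $\eta^2 w_t$ rather than $\eta^2 w$. Integration by parts and Young's inequality give
\[
\int\mu\,w_t^2\,\eta^2\;\le\;C\varepsilon^{-1}(R-r)^{-2}\!\int\mu|Dw|^2\;+\;C\varepsilon(R-r)^{2}\!\int\mu|Dw_t|^2.
\]
Now apply the basic Caccioppoli inequality to $w_t$ (which solves the same equation) to bound the last term by $\int\mu\,w_t^2$ on a slightly larger cylinder, and close by the usual iteration over a chain of shrinking radii. This yields $\int\mu\,w_t^2\le C\int\mu|Dw|^2$ without ever invoking $D_d^2w$ or $D_dw/x_d$. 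With this correction your proof goes through.
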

\begin{proof}
Under the assumption \eqref{a-dd.cond}, both equations in \eqref{int-Q4-constant-L-0} and \eqref{bdr-Q4-constant-L-0} can be written in divergence form as in \eqref{div-L-0}. Therefore, the lemma is a special case of \cite[Lemmas 4.5 and 4.16]{Dong-Phan}.
\end{proof}
As a consequence of Lemma \ref{loc-L2.est-lemma}, we can derive local interior and boundary $L_\infty$-regularity estimates for solutions and their first order derivatives.
\begin{lemma} \label{Lip-est-lemma} \label{Lip-D1.lemma} Under the assumptions of Lemma \ref{loc-L2.est-lemma}, the following statements hold true.
\begin{itemize}
\item[\textup{(i)}] If $\hat{x}_d \geq 2$ and $u \in W^{1,2}_2(Q_{2}(\hat{z}))$ is a solution of \eqref{int-Q4-constant-L-0}, then
\begin{equation*} % \label{in-L-infty-u}
                                    \begin{split}
& \|u\|_{L_\infty(Q_1(\hat{z}))} \leq C(d,\nu, \alpha)  \left( \fint_{Q_{2}(\hat{z})} u^2 \,{\mu} (dz) \right)^{1/2}, \quad \text{and} \\
& \| u_t \|_{L_\infty(Q_1(\hat{z}))}  + \| D u\|_{L_\infty(Q_1(\hat{z}))} \leq C(d, \nu,  \alpha) \left( \fint_{Q_{2}(\hat{z})} (|Du|^2 +\lambda u^2) \,{\mu} (dz) \right)^{1/2}.
\end{split}
\end{equation*}
\item[\textup{(ii)}] If $\hat{x}_d =0$ and $u \in W^{1,2}_2(Q_{2}^+(\hat{z}))$ is a solution of \eqref{bdr-Q4-constant-L-0}, then
\begin{equation*}  % \label{B-L-infty-u}
\begin{split}
& \|u\|_{L_\infty(Q_1^+(\hat{z}))} \leq C(d,\nu, \alpha)  \left( \fint_{Q_2^+(\hat{z})} u^2 \,{\mu} (dz) \right)^{1/2}, \quad \text{and} \\
& \| u_t \|_{L_\infty(Q_1^+(\hat{z}))}  + \| D u\|_{L_\infty(Q_1^+(\hat{z}))} \leq C(d, \nu,  \alpha) \left( \fint_{Q_2^+(\hat{z})} (|Du|^2 +\lambda u^2) \,{\mu} (dz) \right)^{1/2}.
\end{split}
\end{equation*}
\end{itemize}
%Moreover,
%\[
%[u_t]_{C^{1/4, 1/2}(\overline{Q}_1(\hat{z})} +[Du]_{C^{1/4, 1/2}(\overline{Q}_1(\hat{z}))} \leq C(d,\nu, \alpha)\left( \fint_{Q_{2}(\hat{z})} |u|^2 \,{\mu} (dz) \right)^{1/2}.
%\]
\end{lemma}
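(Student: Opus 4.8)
The plan is to deduce the $L_\infty$ bounds from the $L_2$ estimates of Lemma \ref{loc-L2.est-lemma} by a Sobolev-embedding argument, exploiting the fact that all the tangential derivatives $DD_{x'}^k\partial_t^j u$ and time derivatives $\partial_t^{j+1}u$ satisfy the same $L_2$ Caccioppoli-type bounds on shrinking cylinders, while the normal second derivative $D_d^2u$ and the singular term $D_du/x_d$ are controlled by the equation itself. First I would record the nested iteration: applying Lemma \ref{loc-L2.est-lemma}(i) on a chain of intermediate radii between $1$ and $2$ gives, for every $k,j\in\bN\cup\{0\}$,
\[
\int_{Q_{3/2}(\hat z)} \big(|\partial_t^{j+1}u|^2 + |DD_{x'}^k\partial_t^j u|^2\big)\,\mu(dz) \le C(d,k,j,\nu)\int_{Q_2(\hat z)} (|Du|^2 + \lambda u^2)\,\mu(dz),
\]
and similarly $\int_{Q_{3/2}(\hat z)} u^2\,\mu(dz) \le C\int_{Q_2(\hat z)}u^2\,\mu(dz)$ together with the bound on $\int_{Q_{3/2}}(|Du|^2+\lambda u^2)\,\mu(dz)$. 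In the interior case $\hat x_d \ge 2$ the weight $\mu(x_d) = x_d^\alpha$ is bounded above and below by positive constants depending on $\alpha$ on the relevant range $x_d \in (\hat x_d - 2, \hat x_d + 2)$, so these weighted integrals are comparable to unweighted ones and $Q_\rho(\hat z)$ is a genuine parabolic cylinder; hence one may invoke the classical parabolic Sobolev embedding $W^{m,2}_2 \hookrightarrow L_\infty$ for $m$ large (depending only on $d$) applied to $u$ and to $Du$, using that enough $t$- and $x'$-derivatives are controlled in $L_2$ and that $D_d^2u$, $D_d u/x_d$ are controlled via Lemma \ref{D-dd-control} (or directly from the equation, since with constant-in-$x_d$ normal coefficients $D_d^2u = -\,a_0 u_t/a_{dd}\cdot(\dots)$ is an algebraic combination of lower-order and tangential quantities plus $\lambda u$, up to the factor $1/a_{dd}$). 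Thus in finitely many differentiation steps one bootstraps to $\|u\|_{L_\infty(Q_1)} \lesssim (\fint_{Q_2}u^2\,\mu)^{1/2}$ and $\|u_t\|_{L_\infty(Q_1)} + \|Du\|_{L_\infty(Q_1)} \lesssim (\fint_{Q_2}(|Du|^2+\lambda u^2)\,\mu)^{1/2}$; normalizing the average accounts for the $\mu(Q_2)$ factor.

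For the boundary case $\hat x_d = 0$ the same scheme runs, but now $x_d = 0$ lies on the boundary of $Q_2^+(\hat z)$ and the weight is genuinely singular/degenerate there, so the naive Sobolev embedding does not apply directly. The workaround I would use is the standard even/odd reflection adapted to this degenerate setting: tangential derivatives $\partial_t^j D_{x'}^k u$ still satisfy the same weighted $L_2$ bounds and, being tangential, they remain solutions of the same type of equation with the same conormal boundary condition $\lim_{x_d\to 0^+} x_d^\alpha D_d(\cdot) = 0$; therefore one can first establish an $L_\infty$ bound for these tangential quantities by the weighted Sobolev embedding on the half-cylinder (which holds because, after enough tangential differentiations, the quantity lies in an unweighted $W^{1,2}$ in the $t,x'$ variables uniformly in $x_d$, and one can integrate the resulting $L_\infty$-in-$(t,x')$ bound against the one-dimensional weight $x_d^\alpha\,dx_d$ which is integrable near $0$). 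Then $D_d^2 u$ and $D_du/x_d$ are recovered from the equation and the boundary condition exactly as in the proof of Lemma \ref{D-dd-control}: writing $D_d(x_d^\alpha D_d u) = x_d^\alpha F$ with $F$ an algebraic combination of $u_t, DD_{x'}u, u, f$ (here $f=0$), the boundary condition $\lim_{x_d\to 0^+}x_d^\alpha D_d u = 0$ lets us integrate from $0$ and apply the weighted Hardy inequality of Lemma \ref{wei-hardy-lemma} to pass from control of $F$ to control of $D_du/x_d$ in $L_2$, hence in $L_\infty$ after one more round of the embedding.

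The main obstacle is the boundary case: one must verify that the degenerate weight $x_d^\alpha$, with $\alpha\in(-1,1)$, does not destroy the Sobolev-embedding step near $x_d=0$. The key point making this work is precisely the split structure — tangential regularity is unlimited and weight-free in the $(t,x')$ slices (so $L^2_{x_d}(x_d^\alpha\,dx_d)$-valued Sobolev embedding in $(t,x')$ gives $L^\infty_{(t,x')}L^2_{x_d}(x_d^\alpha)$), and normal regularity is supplied by the equation plus Hardy's inequality rather than by an embedding. One also has to be a little careful that the "$+\lambda u^2$'' terms propagate correctly through the iteration (they do, since the recursion in Lemma \ref{loc-L2.est-lemma} only introduces $|Du|^2 + \lambda u^2$ on the right, and $D_d^2 u, D_d u/x_d$ are bounded in terms of $F = |u_t| + |DD_{x'}u| + \lambda|u|$ which is already of this form), and that the constants depend only on $d,\nu,\alpha$. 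Everything else is a routine bootstrap, so I would present the interior case briefly and spend the bulk of the argument on the reflection/Hardy mechanism at the boundary.
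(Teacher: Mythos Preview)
Your proposal differs from the paper's proof, which is a one-line citation: under \eqref{a-dd.cond} the equation rewrites in the divergence form \eqref{div-L-0}, and the $L_\infty$ bounds are then quoted directly from \cite[Propositions~4.7 and~4.23]{Dong-Phan}. You instead sketch a self-contained bootstrap from Lemma~\ref{loc-L2.est-lemma} via Sobolev embedding and the equation structure, which is a reasonable and essentially correct route---and presumably close in spirit to what the cited reference does. Your interior argument (i) is fine as written, since on $Q_2(\hat z)$ with $\hat x_d\ge 2$ the weight $x_d^\alpha$ is bounded above and below and everything reduces to the standard uniformly parabolic case.

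In the boundary case (ii), however, your exposition has a genuine gap. The mention of ``even/odd reflection'' is a red herring: the singular factor $\alpha/x_d$ obstructs any naive reflection across $\{x_d=0\}$, and you never actually use it. More importantly, the phrase ``integrate the resulting $L_\infty$-in-$(t,x')$ bound against $x_d^\alpha\,dx_d$'' does not yield an $L_\infty$-in-$x_d$ bound. What you need to make the scheme work is the weighted one-dimensional embedding $W^{1,2}\big((0,R),x_d^\alpha\,dx_d\big)\hookrightarrow L_\infty(0,R)$, which holds precisely because $\alpha<1$: Cauchy--Schwarz gives
\[
|v(x_d)-v(y)|\le\Big(\int_0^R s^{-\alpha}\,ds\Big)^{1/2}\Big(\int_0^R |v'|^2 s^\alpha\,ds\Big)^{1/2},
\]
with the first factor finite, and averaging over $y\in(R/2,R)$ bounds $\|v\|_{L_\infty}$ by $\|v\|_{L_2(\mu)}+\|v'\|_{L_2(\mu)}$. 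Coupling this with a Banach-space-valued (or slicewise) Sobolev embedding in $(t,x')$---legitimate since Lemma~\ref{loc-L2.est-lemma}(ii) controls $\partial_t^j D_{x'}^k u$ and $D_d\partial_t^j D_{x'}^k u$ in $L_2(\mu)$ for all $j,k$---gives the first estimate. For $D_d u$ in the second estimate, your pointwise integration of $D_d(x_d^\alpha D_d u)=x_d^\alpha F$ is the right mechanism, but note that $F$ contains $DD_{x'}u$ and $\lambda u$; you must therefore first invoke the already-proved first $L_\infty$ estimate for the solutions $D_{x'}u$ and $u$ (and iterate once more for $DD_{x'}u$) before you can close the bound on $D_d u$. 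With these two points made explicit your argument is complete.
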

\begin{proof}
Again, under the assumption \eqref{a-dd.cond}, both equations in \eqref{int-Q4-constant-L-0} and \eqref{bdr-Q4-constant-L-0} can be written in divergence form as in \eqref{div-L-0}. Hence, the lemma follows from \cite[Propositions 4.7 and 4.23]{Dong-Phan}.
\end{proof}
\begin{remark} \label{remark-1} Under the assumptions in Lemma \ref{Lip-est-lemma}, we can use Lemma \ref{loc-L2.est-lemma} with some slight modification to improve the Lipschitz estimates of the solution $u$ as
\[
\| u \|_{L_\infty(Q_1^+(\hat{z}))}+\| u_t \|_{L_\infty(Q_1^+(\hat{z}))} + \| D u\|_{L_\infty(Q_1^+(\hat{z}))} \leq C(d, \nu,  \alpha) \left( \fint_{Q_2^+(\hat{z})} u^2 \,{\mu} (dz) \right)^{1/2}.
\]
\end{remark}
\noindent
Next, we establish local interior and local boundary Lipschitz estimates for higher order derivatives of solutions to the homogeneous equations.
\begin{lemma} \label{Higher-Lip-est-lemma}
Let  $q\in [1,\infty)$ be a constant. Then under the assumptions of Lemma \ref{loc-L2.est-lemma},
%$\nu \in (0, 1]$, $\alpha \in (-1,1)$,  be fixed, and let $\mu(s) = |s|^{\alpha}$ for  $s \in \bR \setminus\{0\}$. Also, let $\hat{z} = (\hat{t}, \hat{x}', \hat{x}_d) \in \bR \times \bR^{d-1} \times \bR_+$ with $\hat{x}_d\geq 2$, and let $(a_{ij})_{d\times d}: ((\hat{x}_d -2)^+, \hat{x}_d + 2) \rightarrow \bR^{d\times d}$ be a bounded measurable matrix satisfying \eqref{ellipticity} and \eqref{a-dd.cond} on $(\hat{x}_d -2)^+, \hat{x}_d + 2)$. \textcolor{red}{Assume also that \eqref{a-b.zero} holds on $(\hat{x}_d -2)^+, \hat{x}_d + 2)$}. Then
for any $k \in \mathbb{N} \cup \{0\}$, the following statements hold true.
\begin{itemize}
\item[\textup{(i)}] If $\hat{x}_d \geq 2$ and $u \in W^{1,2}_{2}(Q_{2}(\hat{z}))$ is a solution of \eqref{int-Q4-constant-L-0}, then
\begin{equation*} % \label{in-L-infty-u}
\begin{split}
 \|u_{t}\|_{L_\infty(Q_1(\hat{z}))} & +\|u_{tt}\|_{L_\infty(Q_1(\hat{z}))} + \|D u_t\|_{L_\infty(Q_1(\hat{z}))} \leq  C(d, \nu, \alpha, q) \left( \fint_{Q_{2}(\hat{z})} |u_t|^q \,{\mu} (dz) \right)^{1/q},  \\
\|D D_{x'}^k u\|_{L_\infty(Q_{1}(\hat{z}))} & +\|D D_{x'}^k u_t\|_{L_\infty(Q_{1}(\hat{z}))} + \|D^2D_{x'}^k u\|_{L_\infty(Q_{1}(\hat{z}))} \\
 &\leq C(d, \nu, k, \alpha, q) \left( \fint_{Q_{2}(\hat{z})} (|D D_{x'}^ku| +\sqrt \lambda |D_{x'}^ku|)^q \,{\mu} (dz) \right)^{1/q}.
 \end{split}
\end{equation*}
\item[\textup{(ii)}] If $\hat{x}_d =0$ and $u \in W^{1,2}_{2}(Q_{2}^+(\hat{z}))$ is a solution of \eqref{bdr-Q4-constant-L-0}, then
\begin{equation}   \label{H-B-L-infty-u}
\begin{split}
 \|u_{t}\|_{L_\infty(Q_1^{+}(\hat{z}))} & +\|u_{tt}\|_{L_\infty(Q_1^{+}(\hat{z}))} + \|D u_t\|_{L_\infty(Q_1^{+}(\hat{z}))} \leq  C(d, \nu, \alpha, q) \left( \fint_{Q_{2}^+(\hat{z})} |u_t|^q \,{\mu} (dz) \right)^{1/q},  \\
  \|D D_{x'}^k u\|_{L_\infty(Q_{1}^+(\hat{z}))} & +\|D D_{x'}^k u_t\|_{L_\infty(Q_{1}^+(\hat{z}))} + \|D^2D_{x'}^k u\|_{L_\infty(Q_{1}^+(\hat{z}))}  \\
&\leq C(d, \nu, k, \alpha, q) \left( \fint_{Q_{2}^+(\hat{z})} (|D D_{x'}^ku| +\sqrt\lambda |D_{x'}^ku|)^q \,{\mu} (dz) \right)^{1/q}.
\end{split}
\end{equation}
\end{itemize}
\end{lemma}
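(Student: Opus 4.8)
The plan is to bootstrap from the first-order $L_\infty$-estimates of Lemma~\ref{Lip-est-lemma} (together with its improvement in Remark~\ref{remark-1}) by differentiating the equation in the tangential and time directions, which commute with the coefficients since $a_{ij}$, $a_0$, $c_0$ depend only on $x_d$. Concretely, for any $m \in \bN \cup \{0\}$, the function $v = D_{x'}^m \partial_t^j u$ solves the same homogeneous equation \eqref{int-Q4-constant-L-0} (resp.\ \eqref{bdr-Q4-constant-L-0}) on a slightly smaller cylinder, because $D_{x'}$ and $\partial_t$ pass through $a_{ij}(x_d)D_{ij}$, through $\tfrac{\alpha}{x_d}a_{dj}D_j$, through $a_0(x_d)\partial_t$, and through $\lambda c_0(x_d)$, and they also preserve the conormal boundary condition $\lim_{x_d\to 0^+} x_d^\alpha D_d u = 0$ (after recalling that under \eqref{a-dd.cond} the conormal derivative is simply $x_d^\alpha D_d u$). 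So the first two terms in each displayed estimate follow by applying Lemma~\ref{Lip-est-lemma}(ii) and Remark~\ref{remark-1} to $v = u_t$, then to $v = D_{x'}^k u$ and $v = D_{x'}^k u_t$, on a chain of shrinking cylinders $Q_2^+ \supset Q_{3/2}^+ \supset Q_1^+$ and absorbing the intermediate radii by a standard covering/scaling argument. The $\sqrt\lambda |D_{x'}^k u|$ term on the right-hand side is exactly what the first-order estimate in Lemma~\ref{Lip-est-lemma} produces when applied to $D_{x'}^k u$ in place of $u$ (note $v = D_{x'}^k u$ also solves the homogeneous equation), since that estimate controls $\|Dv\|_{L_\infty} + \|v_t\|_{L_\infty}$ by $(\fint (|Dv|^2 + \lambda v^2))^{1/2}$.

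The only genuinely new ingredient is the estimate of the normal second derivative $D^2 D_{x'}^k u$ (and correspondingly $D u_t$, $u_{tt}$, $D D_{x'}^k u_t$), since differentiating in $x_d$ does \emph{not} commute with $x_d$-dependent coefficients. Here the key step is Lemma~\ref{D-dd-control}: applied with $\omega \equiv 1$, $p = q = 2$, and on $Q_R^+ = Q_1^+(\hat z)$, it bounds $\|D_d^2 v\|_{L_2(Q_1^+)}$ (and $\|D_d v/x_d\|_{L_2(Q_1^+)}$) in terms of $\|v_t\|_{L_2}$, $\|DD_{x'}v\|_{L_2}$, $\lambda\|v\|_{L_2}$ — all of which we have already bounded in $L_\infty$ by the previous paragraph — together with the forcing, which is zero here. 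Since the mixed derivatives $DD_{x'}v$ only involve \emph{at most one} normal derivative, they are covered by the already-established tangential estimates applied to $v$ (or to $D_j v$ for $j<d$). Then to upgrade the $L_2$-bound on $D_d^2 v$ to an $L_\infty$-bound, one uses that $D_d^2 v$ itself, via the equation, equals $-\sum_{(i,j)\neq(d,d)} a_{ij}D_{ij}v - \tfrac{\alpha}{x_d}D_d v + a_0 v_t + \lambda c_0 v$ pointwise (using $a_{dd}=1$, $a_{dj}=0$), so $\|D_d^2 v\|_{L_\infty(Q_1^+)} \le C(\|D D_{x'}v\|_{L_\infty} + \|D_d v/x_d\|_{L_\infty} + \|v_t\|_{L_\infty} + \lambda\|v\|_{L_\infty})$, and the term $\|D_d v / x_d\|_{L_\infty}$ is handled by writing $D_d v(t,x',x_d)/x_d = x_d^{-\alpha-1}\int_0^{x_d} s^\alpha [x_d$-part of the equation for $v] \, ds$ and estimating the integrand's $L_\infty$-norm by the already-controlled quantities. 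Iterating this scheme for $v = u_t$ (to get $u_{tt}$, $Du_t$) and for $v = D_{x'}^k u$ and $v = D_{x'}^k u_t$ (to get $D^2 D_{x'}^k u$, $DD_{x'}^k u_t$), and finally replacing the $L_2$-average on the right by an $L^q$-average via the interior reverse-type or simply Hölder and the self-improving nature of the homogeneous estimates (this is why the statement is phrased with a general $q \in [1,\infty)$: one first proves it for $q=2$, then for $q < 2$ by Hölder on the right after noting $\mu(Q_2^+)$ is finite, and for $q>2$ one can run the argument already with $L^q$-averages since the base estimate in Lemma~\ref{Lip-est-lemma} and in \cite{Dong-Phan} is actually stated/provable with arbitrary exponents — alternatively interpolate), yields the full statement. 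The interior case (i) is identical but simpler since no boundary condition or Hardy inequality is needed; there $D_d$ also commutes with everything on a ball away from $\{x_d = 0\}$ once we note $x_d \asymp \hat x_d \ge 2$ is harmless, so all derivatives are treated by iterating Lemma~\ref{Lip-est-lemma}(i) directly.

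The main obstacle I anticipate is the normal-derivative step: making precise the passage from the $L_2$-control of $D_d^2 v$ and $D_d v / x_d$ provided by Lemma~\ref{D-dd-control} to an $L_\infty$-control, uniformly as $x_d \to 0^+$ where the coefficient $\tfrac{\alpha}{x_d}$ is singular. The resolution is exactly the pointwise identity from the proof of Lemma~\ref{D-dd-control} — $D_d(x_d^\alpha D_d v) = x_d^\alpha F$ with $F$ a combination of $v_t$, $DD_{x'}v$, $\lambda v$ — which converts the singular term into an integral operator to which the $L_\infty$-version of Hardy's inequality (i.e.\ $|D_d v/x_d| \le x_d^{-\alpha-1}\int_0^{x_d} s^\alpha |F(\cdot,s)|\,ds \le \tfrac{1}{\alpha+1}\|F\|_{L_\infty}$) applies trivially. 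One must be slightly careful that these manipulations are justified for the differentiated solution $v$, which requires knowing $v$ has enough regularity — but this is guaranteed inductively, since at each stage the higher tangential and time derivatives of $u$ have already been shown to lie in $W^{1,2}_2$ of the relevant cylinder by the estimates of the previous paragraph (and ultimately by Lemma~\ref{loc-L2.est-lemma}). A secondary technical point is the bookkeeping of shrinking radii and the covering argument needed to turn "$Q_{3/2}^+ \to Q_1^+$ with constant $C$" statements into "$Q_2^+ \to Q_1^+$"; this is routine and I would dispatch it with the standard interpolation-of-seminorms trick (absorb $\varepsilon$ times the higher norm on the larger cylinder).
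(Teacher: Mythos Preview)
Your approach is essentially the paper's: differentiate in $t$ and $x'$ (which commute with the $x_d$-only coefficients and preserve the conormal condition), apply Remark~\ref{remark-1} to the resulting solutions, and for the pure normal second derivative use the identity $D_d(x_d^\alpha D_d v)=x_d^\alpha F$ with $F$ built from $v_t$, $DD_{x'}v$, $\lambda v$, integrate from $x_d=0$ to get $|D_d v|\le Cx_d\|F\|_{L_\infty}$, and then recover $D_d^2 v$ pointwise from the equation. Two slips to fix, however. First, your H\"older step for general $q$ is reversed: for $q>2$ H\"older on the right-hand side immediately gives the result from the $q=2$ case, whereas for $q\in[1,2)$ the $L^q$-average is \emph{smaller} than the $L^2$-average and one needs the standard iteration argument (split $(\fint|g|^2)^{1/2}\le \varepsilon\|g\|_{L_\infty}+C_\varepsilon(\fint|g|^q)^{1/q}$ on a chain of radii and absorb; this is the \cite[p.~75]{FHL} reference in the paper). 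Second, in the interior case $D_d$ does \emph{not} commute with $a_{ij}(x_d)$, $a_0(x_d)$, $c_0(x_d)$ --- the coefficients are only measurable in $x_d$, so differentiating in $x_d$ produces uncontrolled terms. The reason (i) is simpler is rather that on $Q_1(\hat z)$ one has $x_d\ge \hat x_d-1\ge 1$, so the singular lower-order term $\alpha D_d u/x_d$ is bounded by $|\alpha|\,|D_d u|$, and $D_d^2 u$ can be read off directly from the equation using the already-established $L_\infty$ bounds on $u_t$, $DD_{x'}u$, $D_d u$, $\lambda u$, with no need for the boundary integration step.
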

\begin{proof} We only prove (ii) of the lemma as there is no singularity in (i) and its proof is therefore simpler. Without loss of generality, we can assume $\hat{z} =0$.  Furthermore, by H\"older's inequality for $q>2$ and a standard iteration argument for $q\in [1,2)$ (see, for instance, \cite[p. 75]{FHL}), we only need to consider the case when $q=2$. Observe that the function $u_t$ is still a solution of \eqref{bdr-Q4-constant-L-0}. Therefore, the first estimate of \eqref{H-B-L-infty-u} follows from
%Lemmas \ref{loc-L2.est-lemma} and \ref{Lip-D1.lemma} as demonstrated by
Remark \ref{remark-1}. It then remains to prove the second estimate in \eqref{H-B-L-infty-u}. As $D_{x'}^k u$ is again the solution of \eqref{bdr-Q4-constant-L-0}, it is therefore sufficient to prove this estimate with $k =0$. Also, as $u_t$ is a solution of \eqref{bdr-Q4-constant-L-0}, we can apply Lemmas  \ref{Lip-D1.lemma} and \ref{loc-L2.est-lemma} to obtain
\[
 \|Du\|_{L_\infty(Q_{1}^+(\hat{z}))}+ \|u_t\|_{L_\infty(Q_{1}^+(\hat{z}))} +\|D u_t\|_{L_\infty(Q_{1}^+(\hat{z}))} \leq C(d, \nu, \alpha) \left( \fint_{Q_{2}^+(\hat{z})} (|D u|^2 +\lambda u^2) \,{\mu} (dz) \right)^{1/2}.
\]
Hence, it remains to prove that
\[
\|D^2 u\|_{L_\infty(Q_{1}^+(\hat{z}))}  \leq C(d, \nu, \alpha) \left( \fint_{Q_{2}^+(\hat{z})} (|D u|^2 +\lambda u^2) \,{\mu} (dz) \right)^{1/2}.
\]
Recall that $D_{x'}u$ satisfies the same equation as $u$ with the same boundary condition. By using Lemmas \ref{loc-L2.est-lemma} and \ref{Lip-D1.lemma} with some slight modification {as stated in Remark \ref{remark-1}}, we get
\begin{equation}
                                \label{eq11.26}
\|DD_{x'}u\|_{L_\infty(Q^+_{1})} \le C\left(\fint_{Q_2^+} |D_{x'}u|^2 \ \mu(dz)\right)^{\frac 1 2}.
\end{equation}
Now we prove the $L_\infty$-estimate of $D_d^2u$. To this end, we observe that from \eqref{a-dd.cond} and \eqref{div-L-0}, it follows that
\begin{equation}
                                    \label{eq11.45}
D_d(\mu(x_d)  D_d u)=\mu(x_d)\Big[ a_0(x_d) u_t-  \sum_{i=1}^{d-1} \sum_{j=1}^d a_{ij}(x_d) D_{ij} u +\lambda c_0(x_d) u \Big].
\end{equation}
Then, by using \eqref{eq11.45},  (ii) of Lemma \ref{Lip-D1.lemma},  \eqref{eq11.26}, and Lemma \ref{loc-L2.est-lemma}, we get in $Q_{1}^+$
\begin{align*}
|D_d(\mu(x_d)D_d u)|
&\le C\mu(x_d)\sup_{Q^+_{1}}\big(|u_t|+|DD_{x'}u| + \lambda |u|\big)\\
&\le C\mu(x_d)\left(\fint_{Q^+_2}\Big [|Du|^2 +\lambda u^2 \Big] {\ \mu(dz)}\right)^{1/2}.
\end{align*}
This estimate together with the zero boundary condition gives
$$
|\mu(x_d)D_d u|\le C\int_0^{x_d}\mu(s)\,ds\left(\fint_{Q^+_2} \Big[|Du|^2 +\lambda u^2\Big]{\ \mu(dz)}\right)^{1/2}.
$$
Therefore,
\begin{equation*}
              %                  \label{eq4.45b}
\sup_{Q^+_{1}}|D_d u|\le Cx_d\left(\fint_{Q^+_2}\Big[|Du|^2 +\lambda u^2\Big] {\ \mu(dz)}\right)^{1/2}.
\end{equation*}
The last estimate together with \eqref{eq11.45}, (ii) of Lemma \ref{Lip-D1.lemma}, and \eqref{eq11.26} yields
\begin{equation*}
                           % \label{eq10.11b}
\sup_{Q^+_{1}}|D_{d}^2 u|\le C\left(\int_{Q^+_2}\Big[|Du|^2 +\lambda u^2\Big] {\ \mu(dz)}\right)^{1/2}.
\end{equation*}
%Combining \eqref{eq11.26} and \eqref{eq10.11b}, we conclude that
%\begin{equation*}
%              %              \label{eq11.59}
%\sup_{Q^+_{1}}|DU|\le C\left(\int_{Q^+_2}x_d^{\alpha}|Du|^2\right)^{1/2}.
%\end{equation*}
The proof of the lemma is then completed.
\end{proof}
Recalling that the notation $(f)_Q$ is defined in \eqref{sharp-def}. As a result of Lemma \ref{Higher-Lip-est-lemma}, we obtain the following mean oscillation estimates for solutions to the homogeneous equations.
\begin{corollary}[Oscillation estimates] \label{h-oss.est-constant} Let $\nu \in (0, 1], \lambda \geq 0, \alpha \in (-1,1)$, $q \in (1,\infty)$ and $\rho >0$ be fixed numbers. Let $\hat{z} = (\hat{t}, \hat{x}', \hat{x}_d) \in \bR \times \bR^{d-1} \times \bR_+$ and $(a_{ij})_{d\times d}: ((\hat{x}_d -6\rho)^+, \hat{x}_d + 6\rho) \rightarrow \bR^{d\times d}$ be a bounded measurable matrix-valued function satisfying \eqref{ellipticity} and \eqref{a-dd.cond}. Assume that \eqref{a-b.zero} holds and $u \in W^{1,2}_{q}(Q_{8\rho}^+(\hat{z}))$ is a solution of
\[
 a_0(x_d) u_t - \mathcal{L}_0 u + \lambda c_0(x_d) u =0\quad \text{in} \quad Q_{6\rho}^+(\hat{z})
\]
with the boundary condition
\[
\lim_{x_d \rightarrow 0^+}x_d^{\alpha}D_d u(t,x', x_d) =0, \quad (t,x') \in Q_{6\rho}'(\hat{z}') \quad \text{if} \quad x_d \leq 6\rho, \quad \text{where} \quad \hat{z'} = (\hat{t}, \hat{x}').
\]
 Then, for every $\kappa \in (0,1)$, it follows that
\[
\begin{split}
& \fint_{Q_{\kappa \rho}^+(\hat{z})} |u - (u)_{Q_{\kappa \rho}^+(\hat{z})}| \ \mu(dz) \leq C(\nu, d, \alpha) \kappa \fint_{Q_{8\rho}^+(\hat{z})} |u| \ \mu(dz), \\
& \fint_{Q_{\kappa \rho}^+(\hat{z})} |u_t - (u_t)_{Q_{\kappa \rho}^+(\hat{z})}| \ \mu(dz) \leq C(\nu, d, \alpha) \kappa  \fint_{Q_{8\rho}^+(\hat{z})} |u_t| \ \mu(dz), \\
& \fint_{Q_{\kappa \rho}^+(\hat{z})} |Du  - (Du)_{Q_{\kappa \rho}^+(\hat{z})}| \ \mu(dz) \leq C(\nu, d, \alpha) \kappa \fint_{Q_{8\rho}^+(\hat{z})} (|Du| +\sqrt \lambda |u|) \ \mu(dz), \\
& \fint_{Q_{\kappa \rho}^+(\hat{z})} |DD_{x'} u - (DD_{x'} u)_{_{Q_{\kappa \rho}^+(\hat{z}})}| \ \mu(dz)  \leq C(\nu, d, \alpha) \kappa  \fint_{Q_{8\rho}^+(\hat{z})} (|D D_{x'} u| +\sqrt \lambda |D_{x'}u| \ \mu(dz).
\end{split}
\]
\end{corollary}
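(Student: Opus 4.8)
The plan is to reduce the corollary to the $L_\infty$-bounds of Lemma \ref{Higher-Lip-est-lemma} by a standard rescaling and a Poincar\'e-type argument. First I would normalize: by translating we may take $\hat z=(\hat t,\hat x',\hat x_d)$ with $\hat x_d \ge 0$, and by the scaling $u(z)\mapsto u(\hat t+\rho^2 t,\hat x'+\rho x',\hat x_d+\rho x_d)$ together with $a_{ij}(x_d)\mapsto a_{ij}(\hat x_d+\rho x_d)$, $\lambda\mapsto \rho^2\lambda$, we may assume $\rho=1$; note \eqref{ellipticity}, \eqref{a-dd.cond}, \eqref{a-b.zero} are scale-invariant and the factor $\sqrt\lambda$ in the statement is exactly the one produced by this rescaling. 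There are then two regimes according to the position of $\hat x_d$ relative to the scale: the \emph{interior case} $\hat x_d \ge 4$ (say), where $Q_2(\hat z)\subset Q_{6}^+(\hat z)$ lies strictly away from $\{x_d=0\}$ and part (i) of Lemma \ref{Higher-Lip-est-lemma} applies after a further harmless translation, and the \emph{boundary case} $\hat x_d < 4$, where $Q_2^+( (\hat t,\hat x',0) )$ contains a neighborhood of the relevant portion of the boundary and part (ii) applies. In either case one works on a fixed cylinder of unit size sitting inside the domain of the equation.

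The core estimate is the mean-oscillation bound on the small cylinder $Q_{\kappa}^+(\hat z)$. For a function $v$ that is Lipschitz on $Q_1^+(\hat z)$ one has the elementary inequality
\begin{equation*}
\fint_{Q_{\kappa}^+(\hat z)}\bigl|v-(v)_{Q_{\kappa}^+(\hat z)}\bigr|\,\mu(dz)\le C(d,\alpha)\,\kappa\,\bigl(\|D v\|_{L_\infty(Q_{\kappa}^+(\hat z))}+\|v_t\|_{L_\infty(Q_{\kappa}^+(\hat z))}\bigr),
\end{equation*}
which follows by writing $v(z)-v(w)$ as an integral of $\partial v$ along a piecewise-axis path joining $z,w\in Q_\kappa^+(\hat z)$, bounding the diameter of $Q_\kappa^+(\hat z)$ by $C\kappa$, and using that $\mu(dz)=x_d^\alpha\,dz$ is a doubling measure (so averaging against $(v)_{Q_\kappa^+}$ is comparable to averaging against $v(w)$ over $w$). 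I would apply this with $v=u$, $v=u_t$, $v=D_ju$, and $v=D_jD_{x'_m}u$. The needed $L_\infty$-bounds of $Dv$ and $v_t$ on $Q_1^+(\hat z)\subset\subset Q_6^+(\hat z)$ come directly from Lemma \ref{Higher-Lip-est-lemma}: for $v=u$ one uses the improved bound in Remark \ref{remark-1}; for $v=u_t$ one uses the first line of \eqref{H-B-L-infty-u} (which controls $\|u_t\|_\infty+\|u_{tt}\|_\infty+\|Du_t\|_\infty$ by $\fint|u_t|$); for $v=Du$ one uses the second line of \eqref{H-B-L-infty-u} with $k=0$ (which controls $\|Du\|_\infty+\|Du_t\|_\infty+\|D^2u\|_\infty$ by $\fint(|Du|+\sqrt\lambda|u|)$), noting that $u_t$ itself solves the same equation so its gradient is handled by applying the bound to the solution $u_t$; and for $v=DD_{x'}u$ one uses the $k=1$ case of the second line, which bounds $\|DD_{x'}u\|_\infty+\|D^2D_{x'}u\|_\infty+\|DD_{x'}u_t\|_\infty$ by $\fint(|DD_{x'}u|+\sqrt\lambda|D_{x'}u|)$. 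Finally I would pass from the $L_\infty$-norm on the right-hand sides to the $L_1$-average on $Q_8^+(\hat z)$ by a trivial chain: the $L_\infty$-norm over $Q_1^+$ is bounded, via Lemma \ref{Higher-Lip-est-lemma} applied on a slightly larger cylinder, by an $L_2$-average over $Q_2^+$, and then by interpolation (H\"older plus the reverse-type iteration already invoked in the proof of Lemma \ref{Higher-Lip-est-lemma}, or simply H\"older since we only need to go \emph{down} in exponent after using the Lipschitz bound first) this is controlled by the $L_1$-average over $Q_8^+$; the constants depend only on $d,\nu,\alpha$ as claimed.

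The only genuinely delicate point is the treatment of $D_d u$ and $D_d^2 u$ near $\{x_d=0\}$ in the boundary case, since there $D_d u$ a priori only satisfies $\lim_{x_d\to0^+}x_d^\alpha D_d u=0$ rather than $D_d u=0$. But this is exactly the difficulty already resolved inside the proof of Lemma \ref{Higher-Lip-est-lemma}: using the divergence identity $D_d(\mu(x_d)D_d u)=\mu(x_d)[a_0 u_t-\sum_{i<d}\sum_j a_{ij}D_{ij}u+\lambda c_0 u]$ together with the vanishing of $\mu(x_d)D_d u$ at $x_d=0$, one gets $|D_d u|\le C x_d(\fint_{Q_2^+}(|Du|^2+\lambda u^2)\,\mu)^{1/2}$ and hence $\|D_d^2 u\|_{L_\infty(Q_1^+)}$ controlled by the same average; consequently $D_j D_d u$ and all the mixed second derivatives occurring in $DD_{x'}u$ are Lipschitz up to the boundary with the stated bounds, so the elementary oscillation inequality above applies to them verbatim. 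I therefore expect the write-up to consist almost entirely of invoking Lemma \ref{Higher-Lip-est-lemma} and Remark \ref{remark-1} after the rescaling, with the Poincar\'e step being routine; the main obstacle — the boundary behavior of the normal derivatives — has effectively already been dispatched in the preceding lemma.
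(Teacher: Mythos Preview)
Your approach is essentially identical to the paper's: rescale to $\rho=1$, split into an interior case ($\hat x_d\ge 2$ in the paper) and a boundary case (where one recenters at $\bar z=(\hat t,\hat x',0)$ and uses $Q_\kappa^+(\hat z)\subset Q_3^+(\bar z)\subset Q_6^+(\bar z)\subset Q_8^+(\hat z)$), bound the oscillation on $Q_\kappa^+$ by $\kappa$ times the $L_\infty$-norm of the space--time gradient, and invoke Lemma \ref{Higher-Lip-est-lemma} with $q=1$ to control that $L_\infty$-norm directly by the $L_1$-average on the larger cylinder---so your $L_\infty\to L_2\to L_1$ chain is unnecessary. The one genuine step you omit is that Lemma \ref{Higher-Lip-est-lemma} requires $u\in W^{1,2}_2$, whereas the hypothesis here is only $u\in W^{1,2}_q$; the paper fills this gap by quoting the divergence-form regularity theory of \cite{Dong-Phan} together with Lemma \ref{D-dd-control} to bootstrap $u,u_t,Du,DD_{x'}u,D_d^2u$ into $L_p(Q_{6\rho}^+(\hat z))$ for all $p>q$, after which the Lipschitz lemmas apply.
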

\begin{proof}
Up to a dilation, we may assume without loss of generality that $\rho=1$.  Observe also that since $u_t$ and $D_{x'}u$ satisfy the same equation as $u$, by using the regularity theory established in \cite[Theorem 1.6 and Remark 1.12 (ii)]{Dong-Phan} for weak solutions in divergence form as in \eqref{div-L-0}, we see that $u$, $u_t$, $Du$, $DD_{x'}u$ are in $L_p(Q_{6\rho}^+(\hat{z}))$ for any $p\in (q,\infty)$. Then by Lemma \ref{D-dd-control}, we have $D_d^2 u\in L_p(Q_{6\rho}^+(\hat{z}))$.
Therefore, Lemmas \ref{Lip-est-lemma} and \ref{Higher-Lip-est-lemma} are applicable.  We then consider the following two cases. \\
\noindent
{\bf Case 1:} $\hat{x}_d \geq 2$. In this case $Q_{2}^+(\hat{z}) = Q_2(\hat{z})$. To see the estimate of the oscillation of $u_t$, we use (i) of Lemma \ref{Higher-Lip-est-lemma} with $q=1$ and the doubling property of the $A_2$-weight $\mu$ to obtain
\[
\begin{split}
&\fint_{Q_{\kappa}^+(\hat{z})} |u_t - (u_t)_{Q_{\kappa}^+(\hat{z})}| \ \mu(dz)  \leq \kappa \Big[ \|\partial_t^2u\|_{L_\infty(Q_1(\hat{z}))} + \|D u_t\|_{L_\infty(Q_1(\hat{z}))} \Big] \\
&\quad \leq \kappa C(\nu, d, \alpha)  \fint_{Q_{2}(\hat{z})} |u_t| \ \mu(dz)\leq \kappa C(\nu, d, \alpha) \fint_{Q_{8}^+(\hat{z})} |u_t| \ \mu(dz).
\end{split}
\]
Similarly,
\[
\begin{split}
\fint_{Q_{\kappa}^+(\hat{z})} |DD_{x'} u - (DD_{x'} u)_{_{Q_{\kappa}^+(\hat{z})}}| \ \mu(dz)
& \leq  \kappa \Big[ \|\partial_t D D_{x'}u\|_{L_\infty(Q_1(\hat{z}))} + \|D^2D_{x'}u\|_{L_\infty(Q_1(\hat{z}))} \Big] \\
& \leq \kappa C(\nu, d, \alpha) \fint_{Q_{2}(\hat{z})} (|D D_{x'} u| +\sqrt\lambda |D_{x'}u|) \ \mu(dz)\\
& \leq \kappa C(\nu, d, \alpha)  \fint_{Q_{8}^+(\hat{z})} (|D D_{x'} u| +\sqrt\lambda |D_{x'}u|) \ \mu(dz).
\end{split}
\]
For the oscillation of $Du$, we apply Lemma \ref{Higher-Lip-est-lemma} (i) in a similar way to get
\[
\begin{split}
&\fint_{Q_{\kappa}^+(\hat{z})} |Du - (Du)_{Q_{\kappa}^+(\hat{z})}| \ \mu(dz) \leq \kappa \Big[ \|D^2u\|_{L_\infty(Q_1(\hat{z}))} + \|\partial_tD u\|_{L_\infty(Q_1(\hat{z}))} \Big] \\
&\quad \leq \kappa C(\nu, d, \alpha)  \fint_{Q_{2}(\hat{z})} (|Du| +\sqrt \lambda |u|) \ \mu(dz)\leq \kappa C(\nu, d, \alpha) \fint_{Q_{8}^+(\hat{z})} (|Du| +\sqrt \lambda |u|) \ \mu(dz).
\end{split}
\]
Finally, the estimate of the oscillation of $u$ can be done the same using Remark \ref{remark-1}.\\
\noindent
{\bf Case 2:} $\hat{x}_d < 2$. In this case, let $\bar{z} = (\hat{t}, \hat{x}', 0) \in Q_2(\hat{z})$. Since $\kappa \in (0, 1)$, we observe that
\[
Q_{\kappa}^+(\hat{z}) \subset Q_{3}^+(\bar{z}) \subset Q_{6}^+(\bar{z}) \subset Q_{8}^+(\hat{z}).
\]
Therefore, by Lemma \ref{Higher-Lip-est-lemma} (ii), it follows that
\[
\begin{split}
 &\fint_{Q_{\kappa}^+(\hat{z})} |u_t - (u_t)_{Q_{\kappa}^+(\hat{z})}| \ \mu(dz)  \leq   \kappa \Big[ \|\partial_t^2u\|_{L_\infty(Q_{3}^+(\bar{z}))} + \|D u_t\|_{L_\infty(Q_{3}^+(\bar{z}))} \Big] \\
& \leq \kappa C(\nu, d, \alpha) \fint_{Q_{6}^+(\bar{z})} |u_t| \ \mu(dz)
\leq \kappa C(\nu, d, \alpha) \fint_{Q_{8}^+(\hat{z})} |u_t| \ \mu(dz).
\end{split}
\]
where in the last inequality, we have used the doubling property of the $A_2$-weight $\mu$. The other estimates can be proved exactly in the same way. This completes the proof of the lemma.
\end{proof}

\subsection{$L_p$-estimates for non-homogeneous equations} In this subsection we state and prove two results for the non-homogeneous equation \eqref{constant-L-0}. The first one is a solvability result which particularly proves Theorem \ref{para-main.theorem} when coefficients depend only on $x_d$-variable, $q=p$, and $\omega\equiv 1$.

\begin{theorem}
\label{W-2-p.constant.eqn} Let $\nu \in (0, 1], p \in (1,\infty)$ and $\alpha \in (-1,1)$ be fixed, and let $\mu(s) = |s|^{\alpha}$ for  $s \in \bR \setminus\{0\}$. Suppose that \eqref{ellipticity}, \eqref{a-b.zero}, and \eqref{a-dd.cond} are satisfied. Then, for any $f \in L_p(\Omega_T, d\mu)$ and $\lambda > 0$, there exists a unique strong solution $u \in W^{1,2}_p(\Omega_T, d\mu)$ of the equation \eqref{constant-L-0}. Moreover, the solution $u$ satisfies
\begin{equation} \label{constant-L-p}
\begin{split}
& \norm{u_t}_{L_p(\Omega_T, d\mu)} + \norm{D^2 u}_{L_p(\Omega_T, d\mu)} + \sqrt{\lambda} \norm{Du}_{L_p(\Omega_T, d\mu)} + \lambda \norm{u}_{L_p(\Omega_T, d\mu)} \\
& \leq N(d, \nu,\alpha, p)  \norm{f}_{L_p(\bR^d_+, d\mu)}.
\end{split}
\end{equation}
\end{theorem}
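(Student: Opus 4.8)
The plan is to run the Fefferman--Stein sharp function argument, using the oscillation estimates of Corollary \ref{h-oss.est-constant} as the core decay input. I would first reduce to a priori estimates: assuming the existence of a solution $u \in W^{1,2}_p(\Omega_T, d\mu)$, establish \eqref{constant-L-p}; existence then follows by the method of continuity, connecting $\mathcal{L}_0$ to a model operator (e.g. $D_{dd} + \Delta_{x'} + \frac{\alpha}{x_d}D_d$ plus $\partial_t$), for which solvability is known either directly or as the $p=2$ case handled via the energy estimates of Lemma \ref{loc-L2.est-lemma} together with the divergence-form theory of \cite{Dong-Phan}. To get the a priori estimate, fix $\lambda>0$ and a point $z_0 = (t_0,x_0', x_{d0}) \in \overline{\Omega}_T$ and a radius $\rho>0$. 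On the cylinder $Q_{8\rho}^+(z_0)$ split $u = w + v$, where $w$ solves the non-homogeneous equation $a_0 w_t - \mathcal{L}_0 w + \lambda c_0 w = f$ on $Q_{6\rho}^+(z_0)$ with zero boundary/lateral data, and $v = u - w$ solves the homogeneous equation there with the conormal boundary condition. For $w$ one uses the $L_p$ energy-type bound (again the divergence-form estimates from \cite{Dong-Phan}, or interpolation from the $L_2$ case) to control $\|w_t\|_{L_p}, \|D^2 w\|_{L_p}$, etc., on $Q_{6\rho}^+(z_0)$ by $\|f\|_{L_p(Q_{8\rho}^+(z_0), d\mu)}$.

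Next, apply Corollary \ref{h-oss.est-constant} to $v$ to bound, for each $\kappa \in (0,1)$, the mean oscillation of $v_t$, $Dv$, $DD_{x'}v$ over $Q_{\kappa\rho}^+(z_0)$ by $\kappa$ times averages of the corresponding quantities of $v$ over $Q_{8\rho}^+(z_0)$, hence (using $v = u-w$ and the control on $w$) by $\kappa$ times averages of $u_t, Du, D^2 u$ plus lower order terms over $Q_{8\rho}^+(z_0)$, plus a $\kappa^{-c}$-weighted contribution of $\|f\|$. Combining the two pieces yields, for $U := |u_t| + |DD_{x'}u| + \sqrt\lambda |Du| + \lambda|u|$ (or the relevant combination), a pointwise sharp-function bound
\[
\big(\,\text{relevant derivative}\,\big)^{\#}(z_0) \le C\kappa\, \mathcal{M}(U)(z_0) + C\kappa^{-\gamma}\, \big(\mathcal{M}(|f|^{p_0})(z_0)\big)^{1/p_0}
\]
for some $p_0 \in (1,p)$ and $\gamma>0$. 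Feeding this into the Fefferman--Stein theorem \eqref{Maximal-L-p} (with $q=p$, $\omega\equiv 1$, which is covered by Theorem \ref{FS-thm}) and the Hardy--Littlewood maximal function theorem, then choosing $\kappa$ small, absorbs the $\kappa$-term and gives $\|u_t\|_{L_p} + \|DD_{x'}u\|_{L_p} + \sqrt\lambda\|Du\|_{L_p} + \lambda\|u\|_{L_p} \le C\|f\|_{L_p(\Omega_T, d\mu)}$.

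To close, I would recover the full $D^2 u$: the tangential-tangential and tangential-normal second derivatives $DD_{x'}u$ are already controlled, and $D_d^2 u$ together with $D_d u/x_d$ is controlled by Lemma \ref{D-dd-control} applied on $Q_R^+$ with $R=\infty$, whose right-hand side involves exactly $\|u_t\|_{L_p}$, $\|DD_{x'}u\|_{L_p}$, $\lambda\|u\|_{L_p}$, and $\|f\|_{L_p}$ — all now bounded. This yields \eqref{constant-L-p}. The main obstacle I anticipate is the bookkeeping at the boundary: making sure that the decomposition $u = w + v$ respects the singular conormal condition $\lim_{x_d\to 0^+} x_d^\alpha D_d u = 0$ so that Corollary \ref{h-oss.est-constant} genuinely applies to $v$, and that the $L_p$ theory for the non-homogeneous remainder $w$ near $\{x_d = 0\}$ is available with the weight $x_d^\alpha$ — this is where one leans hardest on the divergence-form results of \cite{Dong-Phan} and on Lemma \ref{D-dd-control} to convert between the non-divergence quantity $D_d^2 u$ and the divergence-form quantities. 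A secondary point is the $p_0 < p$ self-improvement needed so that $\mathcal{M}(|f|^{p_0})^{1/p_0} \in L_p$; this is standard but must be tracked through the constants.
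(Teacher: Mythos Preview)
For $p>2$ your plan is essentially the paper's, with one simplification worth adopting: instead of solving the non-homogeneous piece \emph{locally} on $Q_{6\rho}^+(z_0)$ with artificial lateral data (which creates exactly the boundary-compatibility worries you flag), the paper solves it \emph{globally} on $\Omega_T$ with localized right-hand side $f\mathbf{1}_{Q_{8\rho}^+(z_0)}$, using the already-established $L_2$ theory (Lemma \ref{gbl-L2-constant.est}). The global $L_2$ bound on $(\lambda w, w_t, \sqrt\lambda Dw, DD_{x'}w)$ then feeds into the oscillation estimate (this is Proposition \ref{Osc-pro-constant}), and the homogeneous remainder $v=u-w$ automatically inherits the conormal condition, so Corollary \ref{h-oss.est-constant} applies cleanly. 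The resulting sharp-function bound carries $\big(\mathcal{M}(|f|^2)\big)^{1/2}$, so the maximal-function step needs $p>2$; together with Lemma \ref{D-dd-control} this gives \eqref{constant-L-p} exactly as you outline.

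The real gap is $p\in(1,2)$. You propose running the same argument with some $p_0<p$, but for the non-homogeneous piece this requires $\|w_t\|_{L_{p_0}}+\|DD_{x'}w\|_{L_{p_0}}\le C\|f\|_{L_{p_0}}$, and that is not supplied by the divergence-form theory of \cite{Dong-Phan} (which gives only the first-order quantities $Dw$ and $w$), nor by ``interpolation from the $L_2$ case'' (there is no second endpoint to interpolate against). For $p_0<2$ this second-order estimate is precisely the content of the theorem you are proving, so the argument is circular; this is not the bookkeeping issue you anticipate but a genuine missing ingredient. The paper instead treats $p<2$ by \emph{duality}: the divergence-form theory directly yields $\sqrt\lambda\|Du\|_{L_p}+\lambda\|u\|_{L_p}$ and, after differentiating the divergence-form equation in $x'$, also $\|DD_{x'}u\|_{L_p}$; then $\|u_t\|_{L_p}$ is obtained by freezing $x'$, testing the one-dimensional equation in $(t,x_d)$ against the solution of the backward (adjoint) problem, and invoking the $L_{p'}$ estimate already proved for $p'=p/(p-1)>2$. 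Lemma \ref{D-dd-control} then recovers $D_d^2 u$ as you say.
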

\noindent
We note that even though the coefficient matrix $(a_{ij})_{d\times d}$ only depends in $x_d$-variable, Theorem \ref{W-2-p.constant.eqn}  is new and also very important in applications. %Therefore, the proofs of these theorems are required.
In particular, the theorem will be useful in the next section when we study parabolic equations \eqref{main-eqn} with partially weighted \textup{VMO} coefficients. Our first lemma is a global $L_2$-estimate lemma, which in turn  gives Theorem \ref{W-2-p.constant.eqn} when $p =2$.

\begin{lemma}[Global $L_2$-estimates] \label{gbl-L2-constant.est}
%Let $\nu \in (0,1]$ and let $(a_{ij})_{d\times d}: \bR_+ \rightarrow \bR^{d \times d}$ be a bounded,  measurable matrix satisfying \eqref{ellipticity} and \eqref{a-dd.cond}.  Assume also that \eqref{a-b.zero} holds. Then,
Under the assumptions of Theorem \ref{W-2-p.constant.eqn}, for any $f \in L_2(\Omega_T, d\mu)$ and $\lambda > 0$ there exists unique solution $u \in W^{1,2}_2(\Omega_T, d\mu)$ of  \eqref{constant-L-0}. Moreover,
\begin{equation} \label{L2-constant-est.1}
\begin{split}
& \norm{u_t}_{L_2(\Omega_T, d\mu)} + \norm{D^2 u}_{L_2(\Omega_T, d\mu)} + \sqrt{\lambda} \norm{Du}_{L_2(\Omega_T, d\mu)} + \lambda \norm{u}_{L_2(\Omega_T, d\mu)} \\
& \leq N(d, \nu,\alpha)\norm{f}_{L_2(\Omega_T, d\mu)}.
\end{split}
\end{equation}
\end{lemma}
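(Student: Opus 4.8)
The plan is to establish the a priori estimate \eqref{L2-constant-est.1} first; existence then follows from it by the method of continuity (connecting $\mathcal{L}_0$ to the Laplacian-type model operator obtained by replacing $(a_{ij})$ with the identity, which keeps \eqref{a-dd.cond} and is solvable by the classical theory), and uniqueness is an immediate consequence. By restriction it suffices to treat $T=\infty$, which lets us use the Fourier transform in the tangential variables $(t,x')$; write $\hat u(\tau,\xi',x_d)$ for that partial transform. Since under \eqref{a-dd.cond} the coefficients depend only on $x_d$ with $a_{dd}\equiv1$ and $a_{dj}\equiv0$ for $j<d$, equation \eqref{constant-L-0} becomes, for a.e.\ $(\tau,\xi')$, the ODE on $x_d\in(0,\infty)$
\[
-D_{dd}\hat u-\Bigl(\tfrac{\alpha}{x_d}+ib\Bigr)D_d\hat u+\bigl(i\tau a_0+Q+\lambda c_0\bigr)\hat u=\hat f,\qquad \lim_{x_d\to0^+}x_d^{\alpha}D_d\hat u=0,
\]
where $b=b(x_d,\xi')=\sum_{i<d}a_{id}(x_d)\xi_i\in\bR$ and $Q=Q(x_d,\xi')=\sum_{i,j<d}a_{ij}(x_d)\xi_i\xi_j\in\bR$.

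The heart of the matter is a bound for this ODE that is uniform in $(\tau,\xi')$. Multiply the ODE by $x_d^{\alpha}\overline{\hat u}$ and integrate over $(0,\infty)$. Using $x_d^{\alpha}\bigl(D_{dd}\hat u+\tfrac{\alpha}{x_d}D_d\hat u\bigr)=D_d(x_d^{\alpha}D_d\hat u)$ and integrating by parts---the boundary term at $x_d=0$ vanishes by the boundary condition and at $x_d=\infty$ by decay---the principal part contributes $\int_0^\infty x_d^{\alpha}|D_d\hat u|^2$. Taking the real part and completing the square through $|D_d\hat u|^2+b\operatorname{Im}(D_d\hat u\,\overline{\hat u})=\bigl|D_d\hat u+\tfrac{ib}{2}\hat u\bigr|^2-\tfrac{b^2}{4}|\hat u|^2$, the cross term is absorbed and the surviving tangential form is the Schur complement $Q-\tfrac{b^2}{4}$, which by the ellipticity \eqref{ellipticity} of the full matrix together with the normalization $a_{dd}=1$, $a_{dj}=0$ ($j<d$) satisfies $Q-\tfrac{b^2}{4}\ge\nu|\xi'|^2$. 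Using in addition $c_0\ge\nu$ and Cauchy--Schwarz on the right, and writing $Y=\|x_d^{\alpha/2}\hat u\|_{L_2(0,\infty)}$, $F=\|x_d^{\alpha/2}\hat f\|_{L_2(0,\infty)}$, this gives
\[
\|x_d^{\alpha/2}D_d\hat u\|_{L_2(0,\infty)}^2+\nu(|\xi'|^2+\lambda)Y^2\le CFY .
\]
Taking instead the imaginary part, the cross term equals $-b\operatorname{Re}\int_0^\infty x_d^{\alpha}D_d\hat u\,\overline{\hat u}$, which is bounded by $|b|\,Y\,\|x_d^{\alpha/2}D_d\hat u\|_{L_2(0,\infty)}$, hence by $CFY$ via the previous display (since $|b|^2\le C|\xi'|^2$); this yields $\nu|\tau|Y^2\le CFY$. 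Altogether,
\[
(|\tau|+|\xi'|^2+\lambda)\,Y+\sqrt{|\xi'|^2+\lambda}\,\|x_d^{\alpha/2}D_d\hat u\|_{L_2(0,\infty)}\le C(d,\nu)\,F .
\]

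Squaring, integrating over $(\tau,\xi')$, and invoking Plancherel in $(t,x')$ turns this into
\[
\|u_t\|_{L_2(\Omega_\infty,d\mu)}+\|DD_{x'}u\|_{L_2(\Omega_\infty,d\mu)}+\sqrt{\lambda}\,\|Du\|_{L_2(\Omega_\infty,d\mu)}+\lambda\|u\|_{L_2(\Omega_\infty,d\mu)}\le C\|f\|_{L_2(\Omega_\infty,d\mu)} .
\]
It remains to bound $D_d^2u$. Since $\omega\equiv1$ belongs to $A_2(\bR_+,\mu)\cap M_2(\mu)$ (Remark \ref{A-B-weight} with $\beta=0$), Lemma \ref{D-dd-control} applied to $u$ with $q=p=2$, $\omega\equiv1$, $R=\infty$ gives $\|D_du/x_d\|_{L_2(\Omega_\infty,d\mu)}+\|D_d^2u\|_{L_2(\Omega_\infty,d\mu)}\le C\bigl(\|u_t\|+\|DD_{x'}u\|+\lambda\|u\|+\|f\|\bigr)_{L_2(\Omega_\infty,d\mu)}\le C\|f\|_{L_2(\Omega_\infty,d\mu)}$. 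As $\|D^2u\|\le\|DD_{x'}u\|+\|D_d^2u\|$, assembling the bounds proves \eqref{L2-constant-est.1}.

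The step I expect to require the most care is making the Fourier-side manipulations rigorous: Plancherel and the integrations by parts presuppose that $u,Du,D^2u,u_t$ already lie in $L_2(\Omega_\infty,d\mu)$ and that $u$ is regular enough near $\{x_d=0\}$---where $x_d^{\alpha}$ degenerates or blows up according to the sign of $\alpha$---for the boundary term $[x_d^{\alpha}D_d\hat u\,\overline{\hat u}]_{x_d=0}$ to vanish. I would handle this by first proving the estimate for $f$ in a dense subclass (e.g.\ smooth with compact support), where the needed regularity of $u$ is supplied by the divergence-form theory of \cite{Dong-Phan} (the equation having the form \eqref{div-L-0}) together with Lemma \ref{D-dd-control}, and then passing to general $f\in L_2(\Omega_\infty,d\mu)$ by approximation; throughout, the singular lower-order term $\tfrac{\alpha}{x_d}D_du$ is kept under control by the weighted Hardy inequality, Lemma \ref{wei-hardy-lemma}.
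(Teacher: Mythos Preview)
Your approach is correct but takes a different route from the paper. The paper works entirely in physical space with three successive energy identities: testing the divergence-form version \eqref{div-L-0} of the equation with $\lambda u$ yields the bounds on $\sqrt{\lambda}\|Du\|$ and $\lambda\|u\|$; testing with $D_{kk}u$ for $k<d$ and integrating by parts in $x_k$ (harmless because the coefficients depend only on $x_d$) yields $\|DD_{x'}u\|$; and testing the equation, rewritten as $x_d^\alpha a_0 u_t-D_d(x_d^\alpha D_du)=x_d^\alpha\tilde f$, with $u_t$ yields $\|u_t\|$. Then Lemma~\ref{D-dd-control} handles $D_d^2u$ exactly as you do. For existence, the paper invokes the divergence-form solvability result of \cite{Dong-Phan} and upgrades the resulting weak solution by mollification in $(t,x')$, rather than the method of continuity.

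Your Fourier argument packages the three energy identities into a single ODE estimate---the Schur-complement inequality $Q-b^2/4\ge\nu|\xi'|^2$ plays the role that the full ellipticity plays in the paper's $D_{kk}u$ test, and separating real and imaginary parts replaces the paper's separate tests with $\lambda u$ and $u_t$. The trade-off is that Plancherel in $t$ forces $T=\infty$, so the finite-$T$ statement must be recovered a posteriori (solve on $\Omega_\infty$ with $f$ extended by zero, then use uniqueness on $\Omega_T$, which does follow from the elementary energy identity). Your existence step via continuity is sound in principle, but it defers the real work to the solvability of the constant-coefficient model problem in $W^{1,2}_2(\Omega_\infty,d\mu)$, which is not quite ``classical'' because of the singular weight; the paper's route through \cite{Dong-Phan} is more self-contained and also supplies the regularity near $\{x_d=0\}$ that you correctly flagged as the point needing care.
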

\begin{proof} We first prove the estimate \eqref{L2-constant-est.1} for each solution $u \in W^{1,2}_2(\Omega_T, d\mu)$ of  \eqref{constant-L-0}. Since the coefficients are independent of $t$ and $x'$, by mollifying the equation in $x'$ and $t$, we may assume that $u_t,D_{x'}u\in W^{1,2}_2(\Omega_T, d\mu)$.
By multiplying the equation \eqref{div-L-0} by  $\lambda u$ and integrating in $\Omega_T$, and then using the integration by parts, the ellipticity condition \eqref{ellipticity}, and the condition \eqref{a-b.zero}, we get the energy inequality
\[
\begin{split}
 \lambda \nu \int_{\Omega_T} \mu(x_d) |Du|^2 \ dxdt + \lambda^2 \nu \int_{\Omega_T} \mu(x_d) |u|^2 \ dxdt  & \leq \lambda \int_{\Omega_T}\mu(x_d) |f(t,x)| |u(t,x)|  \ dxdt.
%& \quad + \lambda \int_{\bR^{d-1}} |u(t,x', 0)| |g(x')| dx'dt.
\end{split}
\]
Then using Young's inequality for the term on the right hand side of the above estimate, we obtain
\begin{equation}                               \label{eq3.27}
\begin{split}
 \lambda \int_{\Omega_T} \mu(x_d) |Du|^2 \ dxdt + \lambda^2 \int_{\Omega_T} \mu(x_d) |u|^2 \ dxdt  & \leq N(\nu)  \int_{\Omega_T}\mu(x_d) f^2(t,x) \ dxdt.
%& \quad + N(\nu) \lambda \int_{\bR^{d-1}} |u(t,x', 0)| |g(x')| dx'dt .
\end{split}
\end{equation}
Now, we multiply the equation \eqref{div-L-0} with $D_{kk} u$ for $k \in \{1,2,\ldots, d-1\}$. Because $D_{k} u$ satisfies the same equation with the same boundary condition as $u$, we can use the integration by parts to get
\[
\int_{\Omega_T} \mu(x_d) a_{ij}(x_d) D_{jk} u D_{ik} u \ dxdt + \lambda \int_{\Omega_T} \mu(x_d) c_0(x_d)|D_k u|^2 \ dxdt  \le - \int_{\Omega_T} \mu(x_d) f D_{kk} u \ dxdt.
\]
Then, by using the ellipticity condition \eqref{ellipticity} and \eqref{a-b.zero}, H\"{o}lder's inequality, and Young's inequality we obtain
\begin{equation} \label{eq3.27-p}
\int_{\Omega_T}\mu(x_d) |D D_{x'} u|^2 \ dx dt + \lambda \int_{\Omega_T} \mu(x_d) |D_{x'} u|^2 \ dxdt \leq N(d,\nu) \int_{\Omega_T}\mu(x_d) f(t,x)^2 \ dxdt
\end{equation}
%By induction, we deduce from \eqref{eq3.27-p} and \eqref{eq3.27-p} that for any $l \ge 0$,
%\begin{equation}
%                                    \label{eq3.30}
%\int_{\Omega_T}\mu(x_d) |DD_{x'}^l u|^2 dxdt  + \lambda \int_{\Omega_T} \mu(x_d) |D_{x'}^{l} u|^2 dxdt
%\le C\int_{\Omega_T}\mu(x_d) |f(t,x)|^2 dxdt.
%\end{equation}
Next, we estimate the weighted $L_2$ norm of $u_t$. Recall that $a_{dd}=1$. We rewrite the first equation of \eqref{div-L-0} into
\begin{equation}
                                \label{eq7.48}
x_d^\alpha a_0(x_d) u_t- D_d(x_d^\alpha D_d u)=x_d^\alpha \tilde f,
\end{equation}
where
%where $U$ is defined in \eqref{U.def},
$$
\tilde f=f+ \sum_{i=1}^{d-1}\sum_{j=1}^{d}a_{ij}D_{ij} u -  \lambda c_0 u.
$$
%and $b_{ij}$ are bounded functions.
We test \eqref{eq7.48} by $u_t$ and integrate in $\Omega_T$, and integrate by parts using the zero boundary condition to get
$$
\int_{\Omega_T}\mu(x_d) a_0(x_d) u_t^2 \ dxdt +\int_{\Omega_T} \mu(x_d)  D_d u D_d u_t \ dx dt= \int_{\Omega_T} \mu(x_d) \tilde f(t,x) u_t(t,x) \ dxdt.
$$
Since the second term on the left-hand side above is nonnegative, by Young's inequality, \eqref{a-b.zero}, \eqref{eq3.27}, and \eqref{eq3.27-p} we obtain
\begin{equation*}
                 %           \label{eq7.45}
\int_{\Omega_T}\mu(x_d)  u_t^2 \ dxdt\leq N(d,\nu) \int_{\Omega_T}\mu(x_d) f^2(t,x) \ dxdt.
\end{equation*}
Finally, the estimate of $D_{d}^2 u$ follows from Lemma \ref{D-dd-control} with $\omega_0, \omega_1, \omega_2 \equiv1$.

Next, we prove the unique solvability of \eqref{constant-L-0}. As the equation \eqref{constant-L-0} can be written in the divergence form \eqref{div-L-0}, by \cite[Theorem 1.10]{Dong-Phan},  there is a unique weak solution
$u$ of \eqref{div-L-0} such that $u,Du\in L_2(\Omega_T,d\mu)$. Observe that in this case, we do not require any regularity condition on the coefficient $(a_{ij})_{i, j=1}^d$. In fact, in \cite[Theorem 1.10]{Dong-Phan} $\lambda$ is assumed to be sufficiently large. However, in our case by a simple scaling argument, we only need $\lambda>0$.
By mollifying the equation in $x'$ and $t$, we may assume that $u_t^{(\varepsilon)}, D_{x'} u^{(\varepsilon)},DD_{x'} u^{(\varepsilon)}\in L_2(\Omega_T,d\mu)$. It follows from Lemma \ref{D-dd-control} that $u^{(\varepsilon)} \in W^{1,2}_2(\Omega_T, d\mu)$ is a strong solution of \eqref{constant-L-0} with $f^{(\varepsilon)}$ in place of $f$. By the proof above, we have \eqref{L2-constant-est.1} with $u^{(\varepsilon)}$ and $f^{(\varepsilon)}$ in place of $u$ and $f$. Now taking the limit as $\varepsilon\to 0$, we get \eqref{L2-constant-est.1}. The uniqueness follows from \eqref{L2-constant-est.1}. The proof of the lemma is then completed.
\end{proof}
Next, we establish the oscillation estimates for $u$, $u_t$, $Du$, and $DD_{x'}u$ for the non-homogeneous equation \eqref{constant-L-0}.
%=========
\begin{proposition}[Oscillation estimates] \label{Osc-pro-constant}
%Let $\nu \in (0, 1]$, $\alpha \in (-1,1)$ be fixed and let $\mu(s) = |s|^{\alpha}$ for  $s \in \bR \setminus\{0\}$. Let $(a_{ij})_{d\times d}: \bR_+ \rightarrow \bR^{d \times d}$ be a bounded measurable matrix satisfying \eqref{ellipticity} and \eqref{a-dd.cond}.  \textcolor{red}{Assume that \eqref{a-b.zero} holds}, and
 Under the assumptions of Theorem \ref{W-2-p.constant.eqn}, assume that $f \in L_{2, \textup{loc}}(\Omega_T, d\mu)$ and $u \in W^{1,2}_{2, \textup{loc}}(\Omega_T, d\mu)$ is a solution of the equation \eqref{constant-L-0}. Then, for any $\hat{z} = (\hat{t}, \hat{x}', \hat{x}_d) \in \overline{\Omega}_T$, $\lambda > 0$, and for $\kappa \in (0,1)$
\begin{equation}
                                \label{eq10.19}
\begin{split}
& \fint_{Q_{\kappa\rho}^+(\hat{z})} |\hat{U}- (\hat{U})_{Q_{\kappa \rho}^+(\hat{z})}| \ \mu(dz)\\
& \leq C(\nu, d, \alpha)\left[\kappa  \fint_{Q_{8\rho}^+(\hat{z})} |\hat{U}| \ \mu(dz) + \kappa^{-(d+3)/2}\left( \fint_{Q_{8\rho}^+(\hat{z})} |f(t,x)|^2 \ \mu(dz) \right)^{1/2}\right],
\end{split}
\end{equation}
where $\hat{U} = (\lambda u, u_t, \sqrt{\lambda} Du, D D_{x'} u)$ and $(\hat{U})_{Q_{\kappa \rho}^+(\hat{z})}$ is defined as in \eqref{sharp-def}.
\end{proposition}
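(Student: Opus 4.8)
The strategy is the standard decomposition into a homogeneous part plus a correction term, combined with the oscillation estimate for homogeneous solutions already established in Corollary \ref{h-oss.est-constant}. By dilation I may assume $\rho = 1$. I would split into the interior case $\hat{x}_d \geq 2$ and the boundary case $\hat{x}_d < 2$; in the first case $Q_r^+(\hat z) = Q_r(\hat z)$ for $r \leq 2$ and the argument is a special case of the second, so I focus on $\hat{x}_d < 2$. Set $\bar z = (\hat t, \hat x', 0)$, so that $Q_{8}^+(\hat z) \subset Q_{16}^+(\bar z)$ and, since $\kappa < 1$, $Q_{\kappa}^+(\hat z) \subset Q_6^+(\bar z)$. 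Using Lemma \ref{gbl-L2-constant.est} (after extending $f$ by zero outside $Q_{8}^+(\hat z)$, or rather solving on a suitable cylinder) I produce $w \in W^{1,2}_2$ solving the \emph{non-homogeneous} equation \eqref{constant-L-0} with right-hand side $f \mathbf{1}_{Q_{8}^+(\hat z)}$ on the appropriate domain, with the energy bound
\[
\norm{w_t}_{L_2} + \norm{D^2 w}_{L_2} + \sqrt\lambda \norm{Dw}_{L_2} + \lambda \norm{w}_{L_2} \leq N(d,\nu,\alpha) \norm{f}_{L_2(Q_8^+(\hat z), d\mu)}.
\]
Then $v = u - w$ solves the homogeneous equation $a_0 v_t - \mathcal{L}_0 v + \lambda c_0 v = 0$ in $Q_{8}^+(\hat z)$ with the conormal boundary condition, so Corollary \ref{h-oss.est-constant} applies to each component of $\hat V = (\lambda v, v_t, \sqrt\lambda Dv, DD_{x'}v)$ (with $\hat z$ replaced by $\bar z$ and radius scaled so that $Q_{6}^+(\bar z)$ is the outer cylinder), giving
\[
\fint_{Q_{\kappa}^+(\hat z)} |\hat V - (\hat V)_{Q_{\kappa}^+(\hat z)}| \, \mu(dz) \leq C \kappa \fint_{Q_{6}^+(\bar z)} |\hat V| \, \mu(dz).
\]

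\textbf{Combining the pieces.} For the $w$-part I use the elementary bound $\fint_{Q_\kappa^+} |\hat W - (\hat W)_{Q_\kappa^+}| \, \mu(dz) \leq 2 \fint_{Q_\kappa^+} |\hat W| \, \mu(dz)$ together with $\mu(Q_\kappa^+(\hat z)) \geq c\, \kappa^{d+3}\, \mu(Q_8^+(\hat z))$ (doubling, up to the measure of the time–$x'$ cylinder being $\kappa^{d+1}$ times bigger and the $x_d$-weight being $A_2$; the exponent $d+3$ accounts for $d-1$ spatial directions, one time direction squared, and the $x_d$-direction), followed by Hölder in $Q_\kappa^+(\hat z)$:
\[
\fint_{Q_\kappa^+(\hat z)} |\hat W| \, \mu(dz) \leq \left( \fint_{Q_\kappa^+(\hat z)} |\hat W|^2 \, \mu(dz) \right)^{1/2} \leq \kappa^{-(d+3)/2} \left( \fint_{Q_8^+(\hat z)} |\hat W|^2 \, \mu(dz) \right)^{1/2},
\]
and then the energy estimate for $w$ bounds $\norm{\hat W}_{L_2(Q_8^+, d\mu)}$ by $\norm{f}_{L_2(Q_8^+, d\mu)}$. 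For the $v$-part I bound $\fint_{Q_6^+(\bar z)} |\hat V| \leq C \fint_{Q_8^+(\hat z)} |\hat V| \leq C \fint_{Q_8^+(\hat z)} (|\hat U| + |\hat W|)$, again by doubling of $\mu$, and absorb the $\hat W$-contribution into the $f$-term as above. Finally, the triangle inequality $\fint |\hat U - (\hat U)_{Q_\kappa^+}| \leq \fint |\hat V - (\hat V)_{Q_\kappa^+}| + \fint |\hat W - (\hat W)_{Q_\kappa^+}|$ assembles \eqref{eq10.19}.

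\textbf{Main obstacle.} The one genuinely delicate point is constructing $w$: Lemma \ref{gbl-L2-constant.est} is stated on all of $\Omega_T$, so I need to either (a) extend the equation's coefficients from the relevant $x_d$-range to all of $\bR_+$ preserving \eqref{ellipticity}, \eqref{a-dd.cond}, which is harmless since they depend only on $x_d$, and extend $f$ by zero, or (b) set up $w$ on the finite cylinder directly; option (a) is cleaner. One must then check that $w$ restricted to $Q_8^+(\hat z)$ has right-hand side exactly $f$ there, so that $v = u - w$ is truly homogeneous — this is immediate from $f\mathbf 1_{Q_8^+(\hat z)} = f$ on $Q_8^+(\hat z)$. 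A secondary bookkeeping issue is verifying that Corollary \ref{h-oss.est-constant}, whose hypotheses require the solution to live in $W^{1,2}_q(Q_{8\rho}^+)$, applies to $v$; this follows since $u \in W^{1,2}_{2,\mathrm{loc}}$ and $w \in W^{1,2}_2$. The constant's dependence — $C(\nu,d,\alpha)$ only, with no dependence on $p$ or $\lambda$ — is automatic because every ingredient (Lemma \ref{gbl-L2-constant.est}, Corollary \ref{h-oss.est-constant}, the doubling constant of $\mu$, which depends only on $\alpha$) has that dependence, and the powers of $\lambda$ in $\hat U$ are exactly matched by the $\lambda$-weighted energy estimate.
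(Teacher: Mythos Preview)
Your proposal is correct and follows essentially the same approach as the paper: solve the global problem with right-hand side $f\mathbf{1}_{Q_{8\rho}^+(\hat z)}$ via Lemma \ref{gbl-L2-constant.est}, apply Corollary \ref{h-oss.est-constant} to the homogeneous remainder, and combine by doubling and the triangle inequality. The only cosmetic differences are that the paper swaps your roles of $v$ and $w$, and it applies Corollary \ref{h-oss.est-constant} directly at $\hat z$ (the corollary already handles both interior and boundary cases internally), so your separate case analysis through $\bar z$ is unnecessary.
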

\begin{proof} Let $v \in W^{1,2}_2(\Omega_T)$ be the solution of
\[
\left\{
\begin{array}{ccc}
a_0(x_d)v_t - \mathcal{L}_0 v(t, x) + \lambda c_0(x_d)u& = & f(t,x) \mathbf{1}_{Q_{8\rho}^+(\hat{z})}(t,x)  \\
\displaystyle{\lim_{x_d \rightarrow 0^+} x_d^\alpha   D_d v(t, x', x_d)} & = & 0
\end{array} \quad \text{in} \quad  \Omega_T, \right.
\]
where $\mathbf{1}_{Q_{8\rho}^+(\hat{z})}$ denotes the characteristic function of the cylinder $Q_{8\rho}^+(\hat{z})$. Observe that the existence of $v$ is  ensured by Lemma \ref{gbl-L2-constant.est}. Moreover, it follows from this lemma that
\begin{equation*} %\label{v-L2-const}
\begin{split}
& \|v_t\|_{L_2(\Omega_T, d\mu)} +  \|D^2 v\|_{L_2(\Omega_T, d\mu)} + \sqrt{\lambda} \| Dv\|_{L_2(\Omega_T, d\mu)} + \lambda \| v\|_{L_2(\Omega_T, d\mu)}\\
& \leq C(\nu, d, \alpha) \|f\|_{L_{2}(Q_{8\rho}^+(\hat{z}), d\mu)}.
\end{split}
\end{equation*}
This estimate and the doubling property of the $A_2$-weight $\mu$ particularly imply that
\begin{equation} \label{const-hat-v.est}
\begin{split}
& \left(\fint_{Q_{\kappa \rho}^+(\hat{z})} |\Hat{V}|^2 \ \mu(dz)\right)^{1/2} \leq \frac{C(\nu, d, \alpha)}{\kappa^{(d+3)/2}} \left(\fint_{Q_{8\rho}^+(\hat{z})} |f|^2 \ \mu(dz)\right)^{1/2}, \ \text{and} \\
& \left(\fint_{Q_{8\rho}^+(\hat{z})} |\Hat{V}|^2 \ \mu(dz)\right)^{1/2} \leq C(\nu, d, \alpha) \left(\fint_{Q_{8\rho}^+(\hat{z})} |f|^2 \ \mu(dz)\right)^{1/2},
\end{split}
\end{equation}
where $\hat{V} = (\lambda v, v_t, \sqrt{\lambda} Dv, D D_{x'} v)$.  Now, let $w = u -v \in W^{1,2}_2(Q_{8\rho}^+(\hat{z}))$, which satisfies
\[
a_0(x_d)w_t - \mathcal{L}_0 w + \lambda  c_0(x_d)w = 0 \quad \text{in} \quad Q_{6\rho}^+(\hat{z}).
\]
Moreover, if $\hat{x}_d \leq 6\rho$, the solution $w$ also satisfies the boundary condition
\[
\displaystyle{\lim_{x_d \rightarrow 0^+} x_d^\alpha  D_d w(t, x', x_d)} =  0,   \quad (t, x') \in Q_{6\rho}'(\hat{z}'), \quad \text{with} \quad \hat{z}' = (\hat{t}, \hat{x}').
\]
Hence, it follows from Corollary \ref{h-oss.est-constant} that
\begin{equation} \label{constant-hat-w.est}
\fint_{Q_{\kappa \rho}^+(\hat{z})} |\hat{W} - (\hat{W})_{Q_{\kappa \rho}^+(\hat{z})}| \ \mu(dz)  \leq C(\nu, d, \alpha) \kappa \fint_{Q_{8\rho}^+(\hat{z})} |\hat{W}| \ \mu(dz),
\end{equation}
where $\hat{W} = (\lambda w, w_t, \sqrt{\lambda} Dw, D D_{x'} w)$. Now, to estimate  the oscillation of $\hat{U}$, we note that
\[
\fint_{Q_{\kappa \rho}^+(\hat{z}} |\hat{U} - (\hat{U})_{Q_{\kappa \rho}^+(\hat{z})}| \ \mu(dz) \leq 2 \fint_{Q_{\kappa \rho}^+(\hat{z}} |\hat{U} - c| \ \mu(dz), \quad \text{for every} \ c \in \bR.
\]
Then, by taking $c =(\hat{W})_{Q_{\kappa \rho}^+(\hat{z})}$, and using the triangle inequality and H\"{o}lder's inequality, we see that
\[
\begin{split}
\fint_{Q_{\kappa \rho}^+(\hat{z})} |\hat{U} - (\hat{U})_{Q_{\kappa \rho}^+(\hat{z})}| \ \mu(dz) & \leq 2 \fint_{Q_{\kappa \rho}^+(\hat{z})} |\hat{U} - (\hat{W})_{Q_{\kappa \rho}}^+(\hat{z})| \ \mu(dz) \\
& \leq 2 \left[ \fint_{Q_{\kappa \rho}^+(\hat{z})} |\hat{W}  - (\hat{W})_{Q_{\kappa \rho}^+(\hat{z})}| \ \mu(dz) + \left(  \fint_{Q_{\kappa \rho}^+(\hat{z})} |\hat{V}|^2 \ \mu(dz) \right)^{1/2} \right].
\end{split}
\]
From this estimate, the first estimate in \eqref{const-hat-v.est}, and  \eqref{constant-hat-w.est}, we see that
\[
\begin{split}
& \fint_{Q_{\kappa \rho}^+(\hat{z})} |\hat{U} - (\hat{U})_{Q_{\kappa \rho}^+(\hat{z})}| \ \mu(dz) \\
& \leq C \left[\kappa \fint_{Q_{8\rho}^+(\hat{z})}|\hat{W}(z)| \ \mu(dz) + \kappa^{-\frac{d+3}{2}} \left(\fint_{Q_{8\rho}^+(\hat{z})} |f(z)|^2 \ \mu(dz) \right)^{\frac 1 2} \right] \\
& \leq  C \left[\kappa \fint_{Q_{8\rho}^+(\hat{z})}|\hat{U} (z)| \ \mu(dz) + \kappa \left(\fint_{Q_{8\rho}^+(\hat{z})}|\hat{V} (z)|^2 \ \mu(dz)\right)^{\frac 1 2} + \kappa^{-\frac{d+3}{2}} \left(\fint_{Q_{\rho}^+(\hat{z})} |f(z)|^2 \ \mu(dz) \right)^{\frac 1 2} \right] .
\end{split}
\]
Now, using the second estimate in \eqref{const-hat-v.est}, we can control the middle term on the right hand side of the last estimate and infer \eqref{eq10.19}.
%that
%\begin{align*}
%&\fint_{Q_{\kappa \rho}^+(\hat{z})} |\hat{U} - (\hat{U})_{Q_{\kappa \rho}^+(\hat{z})}| dx \\
%&\leq C(d, \nu, \alpha) \left[\kappa \left( \fint_{Q_{8\rho}^+(\hat{z})}|\hat{U}(z)|^2 \ \mu(dz) \right)^{1/2} + \kappa^{{\color{blue}-\frac{d+3}{2}}} \left(\fint_{Q_{8\rho}^+(\hat{z})} |f(z)|^2 \ \mu(dz) \right)^{1/2} \right].
%\end{align*}
The proof of the lemma is therefore completed.
\end{proof}

Now we can prove Theorem \ref{W-2-p.constant.eqn}.
\begin{proof}[Proof of Theorem \ref{W-2-p.constant.eqn}]
Note that the case $p =2$ is proved in Lemma \ref{gbl-L2-constant.est}. It then remains to consider the case $p\not=2$. We split the proof into two cases:\\
\noindent
{\bf Case 1:} $p > 2$. We first prove the estimate \eqref{constant-L-p}.  Let $u \in W^{1,2}_p(\Omega_T, d\mu)$ be a solution of \eqref{constant-L-0}.  By applying Proposition \ref{Osc-pro-constant}, we  can control the sharp function of $\hat{U}$ by
\[
\hat{U}{^\#}(z) \leq C(\nu, d, \alpha) \left[ \kappa  \M(|\hat{U}|)(z) + \kappa^{-\frac{d+3}{2}} \M(|f|^2)(z)^{1/2} \right] \quad \text{for any} \ z \in \Omega_T,
\]
where $\kappa \in (0,1)$. Then, by using the Fefferman-Stein theorem for sharp functions and Hardy-Littlewood maximal function theorem (cf. \eqref{Maximal-L-p}) we obtain
\[
\begin{split}
\norm{\hat{U}}_{L_p(\Omega_T, d\mu)} & \leq C(d, \alpha, p) \norm{ \hat U^{\#}}_{L_p(\Omega_T, d\mu)}\\
&\leq  C(\nu, d, \alpha,p) \left[\kappa  \norm{\M(|\hat{U}|)}_{L_p(\Omega_T, d\mu)} + \kappa^{-\frac{d+3}{2}} \norm{\M(|f|^2)^{1/2}}_{L_p(\Omega_T, d\mu)} \right] \\
& \leq  C(\nu, d, \alpha, p) \left[ \kappa \norm{\hat{U}}_{L_p(\Omega_T, d\mu)} +\kappa^{-\frac{d+3}{2}} \norm{f}_{L_p(\Omega_T, d\mu)} \right].
\end{split}
\]
By choosing $\kappa$ sufficiently small depending only on $d, \nu, \alpha$, and $p$, we obtain
\begin{equation*}
\norm{\hat{U}}_{L_p(\Omega_T, d\mu)} \leq C(d, \nu, \alpha, p)  \norm{f}_{L_p(\Omega_T,  d\mu)}.
\end{equation*}
This and the definition of $\hat{U}$ imply that
\begin{align*} %\label{const-hat-U-L-p}
&\|u_t\|_{L_p(\Omega_T, d\mu)} + \|DD_{x'} u\|_{L_p(\Omega_T, d\mu)} + \sqrt{\lambda} \|Du\|_{L_p(\Omega_T, d\mu)} + \lambda \|u\|_{L_p(\Omega_T, d\mu)}\nonumber\\
&\leq C(d, \nu, \alpha, p) \|f\|_{L_p(\Omega_T, d\mu)},
\end{align*}
which together with Lemma \ref{D-dd-control} complete the proof of \eqref{constant-L-p}.

Finally, the existence and uniqueness of solutions can be proved in exactly the same way as in Lemma \ref{gbl-L2-constant.est}. %. Observe that the uniqueness of the solution follows from \eqref{constant-L-p}.  Hence, we only need to prove the existence of a solution. By the density of $C_0^\infty(\Omega_T)$ in $L_p(\Omega_T, d\mu)$, we can find a sequence of smooth and compactly supported functions $\{f_k\}_{k} \subset C_0^\infty(\Omega_T)$ such that $f_k \rightarrow f$ in $L_p(\Omega_T, d\mu)$. %Observe that as $p \geq 2$, $f_k \in L_2(\mathbb{R}^n,\mu) \cap L_p(\mathbb{R}^n, \mu)$.
\\
\noindent
{\bf Case 2}: $p \in (1,2)$. In this case, we consider the equation in divergence form as in  \eqref{div-L-0}. Using \cite[Theorem 1.10]{Dong-Phan}, we obtain
\begin{equation} \label{10.9-1.est}
\sqrt{\lambda} \norm{Du}_{L_p(\Omega_T, d\mu)} + \lambda \norm{u}_{L_p(\Omega_T, d\mu)} \leq C(d, \nu, p, \alpha) \norm{f}_{L_p(\Omega_T, d\mu)}.
\end{equation}
Then, with the help of the difference quotient, we can formally  differentiate the equation, and see that $D_{x'}u$ is also a solution of the same equation  \eqref{div-L-0} with $f$ replaced by $D_{x'} f$. Therefore,  using \cite[Theorem 1.10]{Dong-Phan} again, we see that
\begin{equation} \label{10.9-2.est}
 \norm{DD_{x'}u}_{L_p(\Omega_T, d\mu)}  \leq C(d, \nu, p, \alpha) \norm{f}_{L_p(\Omega_T, d\mu)}.
\end{equation}
Now, for each fixed $x' \in \bR^{d-1}$, we consider $u$ is a function of $(t,x_d)$-variable, and write  equation \eqref{constant-L-0} as
\[
 a_0(x_d)x_d^\alpha u_t - D_d(x_d^\alpha D_d u)+\lambda c_0(x_d) u  = x_d^\alpha F  \quad \text{in} \quad  \hat{\Omega}_T,
\]
where $\hat{\Omega}_T = (-\infty, T] \times (0, \infty)$ and
\[
F(t, x_d) = f(t, x', x_d) - \sum_{(i, j) \not=(d,d)} a_{ij} D_{ij}u .
\]
Now we use a duality argument. Let $p'=p/(p-1)\in (2,\infty)$. For any $g\in C_0^\infty(\hat{\Omega}_T)$, let $v\in W^{1,2}_{p'}(\bR\times \bR^+)$ be the unique solution to
\begin{equation} \label{eq11.08}
 \left\{
 \begin{array}{ccl}
-a_0(x_d)x_d^\alpha v_t - D_d(x_d^\alpha D_d v)+\lambda c_0(x_d) v  &=& x_d^\alpha g \mathbf{1}_{(-\infty,T)}(t) \\
 \displaystyle{\lim_{x_d \rightarrow 0^+}\displaystyle{x_d^\alpha D_d v}} & =& 0
 \end{array} \quad \text{in} \quad \bR\times \bR^+,
 \right.
\end{equation}
which satisfies
\begin{equation}
                                    \label{eq4.25}
\|v_t\|_{L_{p'}(\bR\times \bR^+, d\mu)}\le C(\nu,p,\alpha)\|g\|_{L_{p'}(\hat\Omega_T, d\mu)}.
\end{equation}
The existence of such solution and \eqref{eq4.25} follow from {\bf Case I} with a change of variable $t\to -t$. Since $g 1_{(-\infty,T)}(t)=0$ for $t\ge T$, it is easily seen that $v=0$ for $t\ge T$. Since $g$ is smooth and supported on $t\in (\infty, T)$, by using the technique of finite difference quotients, we see that $v_t\in W^{1,2}_{p'}(\bR\times \bR^+)$ satisfies \eqref{eq11.08} with $g_t$ in place of $g$.
Using integration by parts and the boundary conditions of $u$ and $v$, we have
\begin{align*}
&\int_{\hat{\Omega}_T} u_t(t,x',x_d)x_d^\alpha g\,dx_d dt\\
&=\int_{\hat{\Omega}_T} u_t(t,x',x_d)\Big[-a_0(x_d)x_d^\alpha v_t - D_d(x_d^\alpha D_d v)+\lambda c_0(x_d) v\Big]\,dx_d dt\\
&=\int_{\hat{\Omega}_T} {\Big[}-a_0(x_d) x_d^\alpha u_t(t,x',x_d)v_t +u(t,x',x_d) \big(D_d(x_d^\alpha D_d v_t)-\lambda c_0(x_d) v_t\big){\Big]}\,{dx_d dt} \\
&=\int_{\hat{\Omega}_T} {\Big[}-a_0(x_d) x_d^\alpha u_t(t,x',x_d)v_t -D_d u(t,x',x_d) x_d^\alpha D_d v_t-\lambda c_0(x_d) u(t,x',x_d) v_t{\big]\, dx_ddt}\\
&=\int_{\hat{\Omega}_T} -x_d^\alpha F v_t \,dx_d dt.
\end{align*}
It then follows from \eqref{eq4.25} that
\[
\begin{split}
\left|\int_{\hat{\Omega}_T} u_t(t,x',x_d) g\,\mu(dx_d)dt\right| & \le \|F\|_{L_p(\hat \Omega_T, {d}\mu)}\|v_t\|_{L_{p'}(\hat \Omega_T, {d}\mu)}
\\
& \le C(\nu,p,\alpha)\|F\|_{L_p(\hat \Omega_T, {d}\mu)}\|g\|_{L_{p'}(\hat \Omega_T, {d}\mu)}.
\end{split}
\]
Since $g\in C_0^\infty(\hat{\Omega}_T)$ is arbitrary, we obtain
\[
\norm{u_t(\cdot, x', \cdot)}_{L_p(\hat{\Omega}_T, {d} \mu)} \leq C(\nu, p, \alpha) \norm{F(\cdot, x', \cdot)}_{L_p(\hat{\Omega}_T, {d} \mu)}.
\]
Then, integrating this last estimate with respect to $x' \in \bR^{d-1}$, we obtain
\[
\norm{u_t}_{L_p(\Omega_T, d\mu)} \leq C(\nu, p, \alpha) \norm{F}_{L_p(\Omega_T, d\mu)}.
\]
From this, \eqref{10.9-1.est}, and \eqref{10.9-2.est}, we infer that
\[
\begin{split}
& \norm{u_t}_{L_p(\Omega_T, d\mu)} +  \norm{DD_{x'}u}_{L_p(\Omega_T, d\mu)}  +  \sqrt{\lambda} \norm{Du}_{L_p(\Omega_T, d\mu)} + \lambda \norm{u}_{L_p(\Omega_T, d\mu)} \\
& \leq C(d, \nu, p, \alpha) \norm{f}_{L_p(\Omega_T, d\mu)},
\end{split}
\]
which together with Lemma \ref{D-dd-control}  implies \eqref{constant-L-p}. As before, the existence and uniqueness of solutions can be proved in the same way as in Lemma \ref{gbl-L2-constant.est}. The proof of the theorem is now completed.
 \end{proof}
%============

Next, we prove the following corollary of Theorem \ref{W-2-p.constant.eqn} giving the mean oscillation estimates of solution of  \eqref{constant-L-0}. The result is an improved version of Proposition \ref{Osc-pro-constant} and it is needed in the next section.
 \begin{corollary} \label{Osc-L-q} Let $\nu \in (0, 1]$, $\alpha \in (-1,1)$, and $q \in (1, \infty)$ be fixed. Let $\lambda>0, \rho>0$,  $\hat{z} = (\hat{t}, \hat{x}', \hat{x}_d) \in \overline{\Omega}_T$,  and $\mu(s) = |s|^{\alpha}$ for  $s \in \bR \setminus\{0\}$. Assume that $(a_{ij})_{d\times d}: \bR_+ \rightarrow \bR^{d \times d}$ is a bounded measurable matrix-valued function satisfying \eqref{ellipticity} and \eqref{a-dd.cond}. Assume that \eqref{a-b.zero} holds, $f \in L_{q}(Q_{8\rho}^+(\hat{z}), d\mu)$, and $u \in W^{1,2}_{q}(Q_{8\rho}^+(\hat{z}), d\mu)$ is a solution of the equation
 \[
 a_0(x_d) u_t - \mathcal{L}_0 u + \lambda c_0(x_d) u =f \quad \text{in} \quad Q_{6\rho}^+(\hat{z})
 \]
 and if $\hat{x}_d \leq 6\rho$, $u$ satisfies the boundary condition
 \[
 \lim_{x_d \rightarrow 0^+}x_d^\alpha D_{d}u (t,x', x_d)=0 \quad (t,x') \in Q_{6\rho}'(\hat{z}') \quad \text{for} \quad \hat{z}' =(\hat{t}, \hat{x}').
 \]
 Then, for every $\kappa \in (0,1)$
\begin{equation}
                                \label{eq10.19-q}
\begin{split}
& \fint_{Q_{\kappa\rho}^+(\hat{z})} |\hat{U}- (\hat{U})_{Q_{\kappa \rho}^+(\hat{z})}| \ \mu(dz)\\
& \leq C(\nu, d, \alpha, q)\left[\kappa  \fint_{Q_{8\rho}^+(\hat{z})} |\hat{U}| \ \mu(dz) + \kappa^{-(d+3)/q}\left( \fint_{Q_{8\rho}^+(\hat{z})} |f(t,x)|^q \ \mu(dz) \right)^{1/q}\right],
\end{split}
\end{equation}
where $\hat{U} = (\lambda u, u_t, \sqrt{\lambda} Du, D D_{x'} u)$ and $(\hat{U})_{Q_{\kappa \rho}^+(\hat{z})}$ is defined as in \eqref{sharp-def}.
\end{corollary}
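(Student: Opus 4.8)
The plan is to run the same comparison argument as in the proof of Proposition~\ref{Osc-pro-constant}, replacing the $L_2$-solvability of Lemma~\ref{gbl-L2-constant.est} by the full weighted $W^{1,2}_q$-solvability provided by Theorem~\ref{W-2-p.constant.eqn}. First I would let $v\in W^{1,2}_q(\Omega_T,d\mu)$ be the unique solution, furnished by Theorem~\ref{W-2-p.constant.eqn} with $p=q$, of
\[
\left\{
\begin{array}{ccc}
a_0(x_d)v_t-\mathcal{L}_0 v+\lambda c_0(x_d)v&=&f\,\mathbf{1}_{Q_{8\rho}^+(\hat{z})}\\
\displaystyle{\lim_{x_d\rightarrow 0^+}x_d^\alpha D_d v(t,x',x_d)}&=&0
\end{array}\quad\text{in}\quad\Omega_T,\right.
\]
noting that $Q_{8\rho}^+(\hat{z})\subset\Omega_T$ since $\hat{t}\le T$. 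With $\hat{V}=(\lambda v,v_t,\sqrt{\lambda}\,Dv,DD_{x'}v)$, Theorem~\ref{W-2-p.constant.eqn} gives $\|\hat{V}\|_{L_q(\Omega_T,d\mu)}\le C(\nu,d,\alpha,q)\|f\|_{L_q(Q_{8\rho}^+(\hat{z}),d\mu)}$, whence, using the doubling property of the $A_2$-weight $\mu=x_d^\alpha$, for any $\kappa\in(0,1)$,
\[
\Big(\fint_{Q_{\kappa\rho}^+(\hat{z})}|\hat{V}|^q\,\mu(dz)\Big)^{1/q}\le\frac{C(\nu,d,\alpha,q)}{\kappa^{(d+3)/q}}\Big(\fint_{Q_{8\rho}^+(\hat{z})}|f|^q\,\mu(dz)\Big)^{1/q},
\]
together with the same bound having $\kappa$ replaced by $8$ and a constant independent of $\kappa$.

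Next I would set $w=u-v\in W^{1,2}_q(Q_{8\rho}^+(\hat{z}),d\mu)$, which solves the homogeneous equation
\[
a_0(x_d)w_t-\mathcal{L}_0 w+\lambda c_0(x_d)w=0\quad\text{in}\quad Q_{6\rho}^+(\hat{z}),
\]
and, when $\hat{x}_d\le 6\rho$, also satisfies $\lim_{x_d\rightarrow 0^+}x_d^\alpha D_d w(t,x',x_d)=0$ on $Q_{6\rho}'(\hat{z}')$. Corollary~\ref{h-oss.est-constant} then applies to $w$ and yields, with $\hat{W}=(\lambda w,w_t,\sqrt{\lambda}\,Dw,DD_{x'}w)$,
\[
\fint_{Q_{\kappa\rho}^+(\hat{z})}|\hat{W}-(\hat{W})_{Q_{\kappa\rho}^+(\hat{z})}|\,\mu(dz)\le C(\nu,d,\alpha)\,\kappa\fint_{Q_{8\rho}^+(\hat{z})}|\hat{W}|\,\mu(dz).
\]
Finally I would combine the two estimates exactly as in Proposition~\ref{Osc-pro-constant}: since $\hat{U}=\hat{W}+\hat{V}$, for every constant $c$ one has $\fint_{Q_{\kappa\rho}^+(\hat{z})}|\hat{U}-(\hat{U})_{Q_{\kappa\rho}^+(\hat{z})}|\,\mu(dz)\le 2\fint_{Q_{\kappa\rho}^+(\hat{z})}|\hat{U}-c|\,\mu(dz)$, and choosing $c=(\hat{W})_{Q_{\kappa\rho}^+(\hat{z})}$ gives
\[
\fint_{Q_{\kappa\rho}^+(\hat{z})}|\hat{U}-(\hat{U})_{Q_{\kappa\rho}^+(\hat{z})}|\,\mu(dz)\le 2\fint_{Q_{\kappa\rho}^+(\hat{z})}|\hat{W}-(\hat{W})_{Q_{\kappa\rho}^+(\hat{z})}|\,\mu(dz)+2\fint_{Q_{\kappa\rho}^+(\hat{z})}|\hat{V}|\,\mu(dz).
\]
Bounding the first term via Corollary~\ref{h-oss.est-constant} together with $|\hat{W}|\le|\hat{U}|+|\hat{V}|$, and the second term via H\"older's inequality and the two displays from the first step, and using $\kappa\le\kappa^{-(d+3)/q}$ on $(0,1)$ to absorb the $\kappa\big(\fint_{Q_{8\rho}^+(\hat{z})}|\hat{V}|^q\,\mu(dz)\big)^{1/q}$ contribution into the $f$-term, one obtains \eqref{eq10.19-q}.

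I do not expect a genuine obstacle: the result is the $L_q$ analogue of Proposition~\ref{Osc-pro-constant}, and its only substantial input, Theorem~\ref{W-2-p.constant.eqn}, is already available — importantly, Case~2 in the proof of that theorem covers $q\in(1,2)$, so the scheme is not limited to $q\ge 2$, where one could instead deduce the corollary from Proposition~\ref{Osc-pro-constant} via H\"older's inequality. The two points requiring a little care are: verifying that $w$ has enough integrability for Corollary~\ref{h-oss.est-constant} to be applicable, which is handled inside that corollary through the bootstrap via \cite[Theorem~1.6]{Dong-Phan} and Lemma~\ref{D-dd-control}; and keeping track of the powers of $\kappa$ generated by the doubling of $\mu=x_d^\alpha$, which — since $\alpha\in(-1,1)$ — gives precisely the exponent $(d+3)/q$ appearing in \eqref{eq10.19-q}, in agreement with the $q=2$ case.
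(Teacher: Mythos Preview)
Your proposal is correct and follows essentially the same route as the paper's own proof: solve the auxiliary problem for $v$ via Theorem~\ref{W-2-p.constant.eqn} with exponent $q$, bound $\hat V$ using the doubling property of $\mu$, apply Corollary~\ref{h-oss.est-constant} to the homogeneous remainder $w=u-v$, and combine exactly as in Proposition~\ref{Osc-pro-constant}. The only cosmetic remark is that the exponent $(d+3)/q$ is an upper bound (since $\alpha<1$ the true scaling exponent is $d+2+\alpha<d+3$), not the exact doubling exponent, but this is precisely how the paper states and uses it as well.
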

\begin{proof} The proof is similar to that of Proposition \ref{Osc-pro-constant}. However, instead of using the $L_2$-estimates in Lemma \ref{gbl-L2-constant.est}, we use Theorem \ref{W-2-p.constant.eqn}. We provide the details of the proof for completeness. By Theorem \ref{W-2-p.constant.eqn}, there is a unique solution $v \in W^{1,2}_q(\Omega_T)$ to the equation
\[
\left\{
\begin{array}{cccl}
a_0(x_d) v_t - \mathcal{L}_0 v(t, x) + \lambda c_0(x_d) u& = & f(t,x) \mathbf{1}_{Q_{8\rho}^+(\hat{z})}(t,x) \\
\displaystyle{\lim_{x_d \rightarrow 0^+} x_d^\alpha D_d v(t, x', x_d)} & = & 0
\end{array}  \quad \text{in} \quad \Omega_T, \right.
\]
where $\mathbf{1}_{Q_{8\rho}^+(\hat{z})}$ denotes the characteristic function of the cylinder $Q_{8\rho}^+(\hat{z})$.
%Observe that the existence of $v$ is obtained by using Theorem \ref{W-2-p.constant.eqn}.
Moreover, %it also follows from Theorem \ref{W-2-p.constant.eqn} that
\begin{equation*} %\label{v-Lq-const}
\begin{split}
& \|v_t\|_{L_q(\Omega_T, d\mu)} +  \| D^2 v\|_{L_q(\Omega_T, d\mu)} + \sqrt{\lambda} \| Dv\|_{L_q(\Omega_T, d\mu)} + \lambda \| v\|_{L_q(\Omega_T, d\mu)}\\
& \leq C(\nu, d, \alpha, q) \|f\|_{L_{q}(Q_{8\rho}^+(\hat{z}), d\mu)}.
\end{split}
\end{equation*}
This estimate and the doubling property of the $A_2$-weight $\mu$ imply that
\begin{equation} \label{const-hat-v.est-q}
\begin{split}
& \left(\fint_{Q_{\kappa \rho}^+(\hat{z})} |\Hat{V}|^q \ \mu(dz)\right)^{1/q} \leq \frac{C(\nu, d, \alpha, q)}{\kappa^{(d+3)/q}} \left(\fint_{Q_{8\rho}^+(\hat{z})} |f|^q \ \mu(dz)\right)^{1/q}, \ \text{and} \\
& \left(\fint_{Q_{8\rho}^+(\hat{z})} |\Hat{V}|^q \ \mu(dz)\right)^{1/q} \leq C(\nu, d, \alpha,q) \left(\fint_{Q_{8\rho}^+(\hat{z})} |f|^q \ \mu(dz)\right)^{1/q},
\end{split}
\end{equation}
where $\hat{V} = (\lambda v, v_t, \sqrt{\lambda} Dv, D D_{x'} v)$.  Next, let $w = u -v\in W^{1,2}_q(Q_{8\rho}^+(\hat{z}))$,  which satisfies
\[
a_0(x_d) w_t - \mathcal{L}_0 w + \lambda  c_0(x_d) w = 0 \quad \text{in} \quad Q_{6\rho}^+(\hat{z}).
\]
In addition, if $\hat{x}_d \leq 6\rho$, $w$ satisfies the following boundary condition
\[
\displaystyle{\lim_{x_d \rightarrow 0^+} x_d^\alpha  D_d w(t, x', x_d)} =  0,   \quad (t, x') \in Q_{6\rho}'(\hat{z}'), \quad \text{with} \quad \hat{z}' = (\hat{t}, \hat{x}').
\]
Then it follows from Corollary \ref{h-oss.est-constant} that
\begin{equation} \label{constant-hat-w.est-q}
\fint_{Q_{\kappa \rho}^+(\hat{z})} |\hat{W} - (\hat{W})_{Q_{\kappa \rho}^+(\hat{z})}| \ \mu(dz)  \leq C(\nu, d, \alpha) \kappa \fint_{Q_{8\rho}^+(\hat{z})} |\hat{W}| \ \mu(dz),
\end{equation}
where $\hat{W} = (\lambda w, w_t, \sqrt{\lambda} Dw, D D_{x'} w)$.  To control the the oscillation of $\hat{U}$, we  recall that
\[
\fint_{Q_{\kappa \rho}^+(\hat{z})} |\hat{U} - (\hat{U})_{Q_{\kappa \rho}^+(\hat{z})}| \ \mu(dz) \leq 2 \fint_{Q_{\kappa \rho}^+(\hat{z})} |\hat{U} - c| \ \mu(dz), \quad \text{for every} \ c \in \bR.
\]
Consequently, by taking $c =(\hat{W})_{Q_{\kappa \rho}^+(\hat{z})}$, and using the triangle inequality and H\"{o}lder's inequality, we obtain
\[
\begin{split}
\fint_{Q_{\kappa \rho}^+(\hat{z})} |\hat{U} - (\hat{U})_{Q_{\kappa \rho}^+(\hat{z})}| \ \mu(dz) & \leq 2 \fint_{Q_{\kappa \rho}^+(\hat{z})} |\hat{U} - (\hat{W})_{Q_{\kappa \rho}}^+(\hat{z})| \ \mu(dz) \\
& \leq 2 \left[ \fint_{Q_{\kappa \rho}^+(\hat{z})} |\hat{W}  - (\hat{W})_{Q_{\kappa \rho}^+(\hat{z})}| \ \mu(dz) + \left(  \fint_{Q_{\kappa \rho}^+(\hat{z})} |\hat{V}|^q \ \mu(dz) \right)^{1/q} \right].
\end{split}
\]
From this estimate, the first estimate in \eqref{const-hat-v.est-q}, and  \eqref{constant-hat-w.est-q}, we see that
\[
\begin{split}
& \fint_{Q_{\kappa \rho}^+(\hat{z})} |\hat{U} - (\hat{U})_{Q_{\kappa \rho}^+(\hat{z})}| \ \mu(dz) \\
& \leq C \left[\kappa \fint_{Q_{8\rho}^+(\hat{z})}|\hat{W}(z)| \ \mu(dz) + \kappa^{-\frac{d+3}{q}} \left(\fint_{Q_{8\rho}^+(\hat{z})} |f(z)|^q \ \mu(dz) \right)^{
\frac 1q} \right] \\
& \leq  C \left[\kappa \fint_{Q_{8\rho}^+(\hat{z})}|\hat{U} (z)| \ \mu(dz) + \kappa \left(\fint_{Q_{8\rho}^+(\hat{z})}|\hat{V} (z)|^q \ \mu(dz)\right)^{\frac 1q} + \kappa^{-\frac{d+3}{q}} \left(\fint_{Q_{\rho}^+(\hat{z})} |f(z)|^q \ \mu(dz) \right)^{\frac 1q} \right] .
\end{split}
\]
Finally, using the second estimate in \eqref{const-hat-v.est-q}, we can control the middle term on the right hand side of the last estimate and infer \eqref{eq10.19-q}.
The proof is completed.
\end{proof}
%==============
\section{Equations with singular partially weighted \textup{BMO} coefficients} \label{VMO-section}

This section is devoted to the proofs of Theorems \ref{para-main.theorem} and \ref{elli-main.theorem}.  Recall that $\Omega_T = (-\infty, T] \times \bR^d_{+}$, $(a_{ij})_{d \times d}: \Omega_T \rightarrow \bR^{d\times d}$ is a bounded, measurable matrix-valued function satisfying the ellipticity condition \eqref{ellipticity} and \eqref{extension-type-matrix}, and $a_0, c_0: \Omega_T \rightarrow \bR$ are measurable functions satisfying \eqref{a-b.zero0}. We first focus our attention on the equation \eqref{main-eqn} which is the parabolic equation in non-divergence form with singular coefficients:
\begin{equation} \label{eqn.variable-coeff}
\left\{
\begin{array}{ccc}
a_0u_t - \mathcal{L} u(t,x)  + \lambda c_0 u&  = & f(t,x) \\
\displaystyle{\lim_{x_d \rightarrow 0^+} x_d^\alpha D_d u} & = & 0
\end{array} \quad \text{in} \quad \Omega_T, \right.
\end{equation}
where
\[
\mathcal{L} u(t,x) = a_{ij}(t,x)D_{ij} u(t,x) + \frac{\alpha}{x_d}  a_{dd}(t,x) D_{d}u(t,x).
\]
\noindent
We first state and prove a lemma about the oscillation estimates for the solutions.
\begin{lemma} \label{lemma.3.3} Let $\nu \in (0,1)$, $q \in (1, \infty)$, $\alpha \in (-1,1)$, $p \in (q, \infty)$ and assume that \eqref{ellipticity}, \eqref{a-b.zero0}, and \eqref{extension-type-matrix} hold. Let $\lambda>0$ and $\rho, \rho_1, \rho_0 \in (0,1)$,  $\hat{z} = (\hat{t}, \hat{x}', \hat{x}_d), z_0 \in \overline{\Omega}_T$, $t_1 \in \bR$ and $f \in L_{q}(Q_{8\rho}^+(\hat{z}), d\mu)$. Assume that $u \in W^{1,2}_{p}(Q_{8\rho}^+(\hat{z}), d\mu)$ vanishing outside $(t_1 -(\rho_0\rho_1)^2, t_1]$ is a solution of the equation
 \[
 u_t - \mathcal{L} u + \lambda u =f \quad \text{in} \quad Q_{6\rho}^+(\hat{z}),
 \]
 and if $\hat{x}_d \leq 6\rho$, $u$ satisfies the boundary condition
 \[
 \lim_{x_d \rightarrow 0^+}x_d^\alpha D_{d}u(t,x', x_d)=0, \quad (t,x') \in Q_{6\rho}'(\hat{z}'), \quad \text{with} \quad \hat{z}' =(\hat{t}, \hat{x}').
 \]
Then, for every $\kappa \in (0,1)$ it holds that
\[
\begin{split}
& \fint_{Q_{\kappa \rho}^+(\hat{z})} | \hat{U} - (\hat{U})_{Q_{\kappa \rho}^+(\hat{z})} | \ \mu(dz) \\
& \leq C(d,\nu, p, q, \alpha)\left[\kappa \left(\fint_{Q_{8\rho}^+(\hat{z})} |\hat{U}|\ \mu(dz) \right) +  \kappa^{-(d+3)} \rho_1^{2(1-1/q)}\left(  \fint_{Q_{8\rho}^+(\hat{z})} | \hat{U} |^q \ \mu(dz) \right)^{\frac{1}{q}} \right ] \\
&  \quad + C(d, \nu, p, q, \alpha) \kappa^{-\frac{d+3}{q}}\left[ a^{\#}_{\rho_0}(\hat{z})^{\frac{1}{q} -\frac{1}{p}} \left( \fint_{Q_{8\rho}^+(\hat{z})} (|D^2u|+|D_d u/x_d|)^p \ \mu(dz) \right)^{1/p} \right. \\
& \qquad +  \left.  a^{\#}_{\rho_0}(\hat{z})^{\frac{1}{q} - \frac{1}{p}}\left( \fint_{Q_{8\rho}^+(\hat{z})}(|u_t|+\lambda |u|)^p \ \mu(dz) \right)^{1/p}  + \left( \fint_{Q_{8\rho}^+(\hat{z})} |f|^q \ \mu(dz) \right)^{1/q} \right],
\end{split}
\]
where $\hat{U} = (\lambda u, u_t, \sqrt{\lambda} Du, DD_{x'} u)$.
\end{lemma}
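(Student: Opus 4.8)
The plan is to freeze the coefficients in the $x_d$-variable and compare $u$ with the solution of a constant-(in $x',t$)-coefficient equation, then invoke the oscillation estimate of Corollary \ref{Osc-L-q}. First I would set $\bar a_{ij}(x_d) = [a_{ij}]_{\rho_0, \hat z}(x_d)$, $\bar a_0(x_d) = [a_0]_{\rho_0, \hat z}(x_d)$, $\bar c_0(x_d) = [c_0]_{\rho_0, \hat z}(x_d)$ (the partial averages over $Q'_{\rho_0}(\hat z')$ defined in \eqref{average-a}), and let $\mathcal L_0$ be the associated operator with coefficients depending only on $x_d$. Note these averaged coefficients still satisfy \eqref{ellipticity}, \eqref{a-b.zero}, and \eqref{a-dd.cond}, since averaging in $(x',t)$ preserves pointwise bounds and the structural condition \eqref{extension-type-matrix}. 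I would then write the equation for $u$ in the frozen-coefficient form
\[
\bar a_0 u_t - \mathcal L_0 u + \lambda \bar c_0 u = f + g \quad\text{in } Q_{6\rho}^+(\hat z),
\]
where the error term is
\[
g = (\bar a_0 - a_0) u_t + (a_{ij} - \bar a_{ij}) D_{ij} u + \tfrac{\alpha}{x_d}(a_{dd} - \bar a_{dd}) D_d u + \lambda(\bar c_0 - c_0) u,
\]
and I would apply Corollary \ref{Osc-L-q} (with the same exponent $q$ and the same $\hat z$, $\kappa$, and with $8\rho$-cylinders) to this equation, which yields
\[
\fint_{Q_{\kappa\rho}^+(\hat z)} |\hat U - (\hat U)_{Q_{\kappa\rho}^+(\hat z)}|\,\mu(dz)
\le C\Big[\kappa \fint_{Q_{8\rho}^+(\hat z)} |\hat U|\,\mu(dz) + \kappa^{-(d+3)/q}\Big(\fint_{Q_{8\rho}^+(\hat z)} |f+g|^q\,\mu(dz)\Big)^{1/q}\Big].
\]

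The core of the argument is then to estimate $\|g\|_{L_q(Q_{8\rho}^+(\hat z), d\mu)}$ in terms of the quantities appearing on the right-hand side of the claimed inequality. Here I would use Hölder's inequality with exponents $p/q$ and $(p/q)'$ to split, for instance,
\[
\Big(\fint_{Q_{8\rho}^+} |(a_{ij}-\bar a_{ij}) D_{ij} u|^q\,\mu(dz)\Big)^{1/q}
\le \Big(\fint_{Q_{8\rho}^+} |a_{ij}-\bar a_{ij}|^{\frac{pq}{p-q}}\,\mu(dz)\Big)^{\frac1q - \frac1p}\Big(\fint_{Q_{8\rho}^+} |D^2 u|^p\,\mu(dz)\Big)^{1/p}.
\]
Since $|a_{ij}-\bar a_{ij}| \le 2\nu^{-1}$, the first factor is bounded by a constant times $\big(\fint |a_{ij}-\bar a_{ij}|\,\mu(dz)\big)^{\frac1q-\frac1p}$, and by the definition \eqref{a-sharp} of $a^\#$ and the doubling of the $A_2$-weight $\mu$ (so that averaging over $Q_{8\rho}^+(\hat z)$ is comparable to averaging over $Q_{\rho_0}^+(\hat z)$, using $\rho \le 1$ and $8\rho$ comparable to $\rho_0$ — this needs the hypothesis $\rho, \rho_0 \in (0,1)$ and the way the scales are coupled in the statement), this is controlled by $a^\#_{\rho_0}(\hat z)^{\frac1q - \frac1p}$. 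The same scheme handles the $\bar a_0 - a_0$, $\bar c_0 - c_0$ terms (producing the $(|u_t| + \lambda|u|)$ piece) and the singular term $\tfrac{\alpha}{x_d}(a_{dd}-\bar a_{dd})D_d u$, which contributes the $|D_d u/x_d|$ factor. Collecting these gives exactly the third and fourth groups of terms in the claimed bound with the factor $\kappa^{-(d+3)/q}$.

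It remains to absorb the term $\kappa \fint_{Q_{8\rho}^+} |\hat U|\,\mu(dz)$ produced by Corollary \ref{Osc-L-q} — this is already of the desired form — and to produce the second term $\kappa^{-(d+3)}\rho_1^{2(1-1/q)}\big(\fint |\hat U|^q\big)^{1/q}$. The latter comes from the hypothesis that $u$ vanishes outside the short time interval $(t_1 - (\rho_0\rho_1)^2, t_1]$: on a cylinder $Q_{8\rho}^+(\hat z)$, the measure (w.r.t.\ $\mu$) of the support of $u$ in time is at most of order $(\rho_0\rho_1)^2$, so for any exponent $r < q$ one can upgrade an $L_r$ average to an $L_q$ average at the cost of a power of $\rho_1$; applied with the exponent $1$ in place of $q$ inside the Corollary's first term (or rather, comparing $\fint|\hat U|$ on the true support versus on the full cylinder), Hölder gives $\fint_{Q_{8\rho}^+}|\hat U|\,\mu(dz) \le C\rho_1^{2(1-1/q)}\big(\fint_{Q_{8\rho}^+}|\hat U|^q\,\mu(dz)\big)^{1/q}$ with the remaining $\kappa$-powers bookkept as $\kappa^{-(d+3)}$. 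I expect the main obstacle to be the careful tracking of scales: making sure that $a^\#_{\rho_0}(\hat z)$ (an average over a cylinder of radius $\rho_0$) legitimately dominates averages over $Q_{8\rho}^+(\hat z)$, which forces the precise relation between $\rho$, $\rho_0$, $\rho_1$ implicit in the statement, and that the doubling constants of $\mu$ absorb the mismatch; and the bookkeeping of the $\kappa$-powers so that they match the exponents $\kappa$, $\kappa^{-(d+3)}$, $\kappa^{-(d+3)/q}$ stated in the lemma. The interplay between the two exponents $p$ and $q$ (needed so that the small factor $a^\#_{\rho_0}(\hat z)^{1/q - 1/p}$ is genuinely a positive power) is routine once the Hölder splitting is set up correctly.
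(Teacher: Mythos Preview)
Your scheme has a genuine gap at the scale-comparison step. You freeze the coefficients at scale $\rho_0$, setting $\bar a_{ij}=[a_{ij}]_{\rho_0,\hat z}$, and then need
\[
\Big(\fint_{Q_{8\rho}^+(\hat z)}|a_{ij}-\bar a_{ij}|\,\mu(dz)\Big)^{\frac1q-\frac1p}\le C\,a^\#_{\rho_0}(\hat z)^{\frac1q-\frac1p}.
\]
You justify this by saying ``averaging over $Q_{8\rho}^+(\hat z)$ is comparable to averaging over $Q_{\rho_0}^+(\hat z)$'' via doubling, and that the statement implicitly couples the scales. Neither is true: the lemma allows $\rho$ and $\rho_0$ to be arbitrary in $(0,1)$, so $8\rho$ may be vastly smaller than $\rho_0$, and doubling then produces a factor $(\rho_0/\rho)^{d+3}$, not a constant. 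Conversely, when $8\rho>\rho_0$ your averaged coefficients $[a_{ij}]_{\rho_0,\hat z}$ are taken over a cylinder smaller than the one you integrate on, and the definition of $a^\#$ gives you nothing. The same issue infects your derivation of the $\rho_1^{2(1-1/q)}$ term: the H\"older step you describe, $\fint_{Q_{8\rho}^+}|\hat U|\le C\rho_1^{2(1-1/q)}\big(\fint_{Q_{8\rho}^+}|\hat U|^q\big)^{1/q}$, uses that the time-support has length $(\rho_0\rho_1)^2\lesssim(8\rho)^2$, which again requires $8\rho\gtrsim\rho_0$.

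The paper fixes this by an explicit case split. When $8\rho>\rho_0$, one forgoes the freezing argument entirely and bounds the oscillation crudely by $C\kappa^{-(d+3)}\fint_{Q_{8\rho}^+}|\hat U|$, then applies your H\"older-with-short-support step (which is now legitimate) to produce the $\kappa^{-(d+3)}\rho_1^{2(1-1/q)}$ term. When $8\rho\le\rho_0$, one freezes at scale $8\rho$ rather than $\rho_0$, i.e.\ sets $\bar a_{ij}=[a_{ij}]_{8\rho,\hat z}$; the oscillation average on $Q_{8\rho}^+(\hat z)$ is then exactly $a^\#_{8\rho}(\hat z)$, which falls under the range $\rho\in(0,\rho_0)$ of the VMO hypothesis. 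The rest of your argument (applying Corollary~\ref{Osc-L-q} and the H\"older splitting for the error $g$) is correct in that second case once the freezing scale is fixed.
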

\begin{proof} We split the proof into two cases depending on $8\rho > \rho_0$ or $8\rho \leq \rho_0$. \\
{\bf Case I}:  $8\rho > \rho_0$. We see that
\[
\begin{split}
 & \fint_{Q_{\kappa \rho}^+(\hat{z})} | \hat{U} - (\hat{U})_{Q_{\kappa \rho}^+(\hat{z})} | \ \mu(dz)\leq   C(d, \alpha) \kappa^{-(d+3)}\fint_{Q_{8\rho}^+(\hat{z})} | \hat{U}| \ \mu(dz) \\
 & \leq C(d, \alpha) \kappa^{-(d+3)} \left(  \fint_{Q_{8 \rho}^+(\hat{z})} \mathbf{1}_{(t_1 -(\rho_0\rho_1)^2, t_1]} \ \mu(dz) \right)^{1-\frac{1}{q}} \left(  \fint_{Q_{8\rho}^+(\hat{z})} | \hat{U} |^q \ \mu(dz) \right)^{\frac{1}{q}} \\
 &\leq C(d, \alpha) \kappa^{-(d+3)} \rho_1^{2(1-1/q)}\left(  \fint_{Q_{8\rho}^+(\hat{z})} | \hat{U} |^q \ \mu(dz) \right)^{\frac{1}{q}}.
 \end{split}
\]
\noindent
{\bf Case 2}: $8\rho \leq \rho_0$. Recall that $[a_{ij}]_{8\rho, \hat{z}}(x_d)$ are defined as in \eqref{average-a} for $i, j \in \{1, 2,\ldots, d\}$.  %We take $[a_{ij}](x_d)=[a_{ij}]_{8\rho, \hat{z}}(x_d)$ when $8\rho\le \rho_0$ and $[a_{ij}](x_d)=[a_{ij}]_{\rho_0, z_0}(x_d)$ otherwise.
Denote
\[
\mathcal{L}_{\rho,\hat{z}} u = [a_{ij}]_{8\rho, \hat{z}}(x_d) D_{ij} u + \frac{\alpha}{x_d} [a_{dd}]_{8\rho, \hat{z}}(x_d) {D_{d}} u.
\]
We also denote
\[
F_1(t,x)  =  (a_{ij} - [a_{ij}]_{8\rho, \hat{z}}) D_{ij} u(t,x), \quad F_2(t,x) = \frac{\alpha}{x_d}(a_{dd} - [a_{dd}]_{8\rho, \hat{z}}) D_{d} u(t,x),
\]
and
$$
F_3(t,x)  =  ([a_{0}]_{8\rho, \hat{z}}-a_{0}) u_t(t,x)
+\lambda ([c_{0}]_{8\rho, \hat{z}}-c_{0}) u(t,x).
$$
Under the condition \eqref{extension-type-matrix},  we observe that $u$ is a solution of
\begin{equation*} %\label{eqn-frozen}
[a_0]_{8\rho, \hat{z}}(x_d) u_t - \mathcal{L}_{\rho, \hat{z}} u(t,x)  + \lambda  [c_0]_{8\rho, \hat{z}}(x_d) u=  f(t,x) + \sum_{i=1}^3 F_i(t,x)  \quad \text{in} \quad Q_{6\rho}^+(\hat{z}).
\end{equation*}
Moreover, if $\hat{x}_d \leq 6\rho$, the solution $u$ also satisfies the boundary condition
\[
\lim_{x_d \rightarrow 0^+} x_d^\alpha D_d u(t, x', x_d)  = 0 \quad \text{for} \quad (t,z') \in Q_{6\rho}'(\hat{z}').
\]
 %dividing the equation \eqref{eqn-frozen} by $[a_{dd}]_{8\rho, \hat{z}}(x_d)$,
By applying Corollary \ref{Osc-L-q}, we infer that
\begin{equation} \label{F-oss}
\begin{split}
& \fint_{Q_{\kappa\rho}^+(\hat{z})} |\hat{U}- (\hat{U})_{Q_{\kappa \rho}^+(\hat{z})}| \ \mu(dz)\\
& \leq C( d,\nu, \alpha, q)\left[\kappa  \fint_{Q_{8\rho}^+(\hat{z})} |\hat{U}| \ \mu(dz) + \kappa^{-(d+3)/q}\left( \fint_{Q_{8\rho}^+(\hat{z})} |f(t,x)|^q \ \mu(dz) \right)^{1/q}\right. \\
& \quad + \left.   \kappa^{-(d+3)/q}\sum_{i=1}^3\left( \fint_{Q_{8\rho}^+(\hat{z})} |F_i|^q \ \mu(dz) \right)^{1/q}\right],
\end{split}
\end{equation}
where $\hat{U} = (\lambda u, u_t, \sqrt{\lambda} Du, D D_{x'} u)$. We now control the last three terms on the right hand side of \eqref{F-oss}.  By H\"{o}lder's inequality and the boundedness of $(a_{ij})_{i,j=1}^d$ in \eqref{ellipticity}, we see that
\[
\begin{split}
& \left( \fint_{Q_{8\rho}^+(\hat{z})} |F_1|^q \ \mu(dz) \right)^{1/q} \\
& \leq  \left( \fint_{Q_{8\rho}^+(\hat{z})} |a_{ij}(z) -[a_{ij}]_{8\rho, \hat{z}}(x_d) |^{\frac{pq}{p-q}} \ \mu(dz) \right)^{\frac{p-q}{pq}} \left( \fint_{Q_{8\rho}^+(\hat{z})} |D^2u|^p \ \mu(dz) \right)^{1/p} \\
& \leq  C(\nu, p, q) \left( \fint_{Q_{8\rho}^+(\hat{z})} |a_{ij}(z) -[a_{ij}]_{8\rho, \hat{z}}(x_d) |  \ \mu(dz) \right)^{\frac{p-q}{pq}} \left( \fint_{Q_{8\rho}^+(\hat{z})} |D^2u|^p \ \mu(dz) \right)^{1/p} \\
& = C(\nu, p, q)a^{\#}_{\rho_0}(\hat z)^{\frac{1}{q} - \frac{1}{p}}\left( \fint_{Q_{8\rho}^+(\hat{z})} |D^2u|^p \ \mu(dz) \right)^{1/p}.
\end{split}
\]
Similarly, we use H\"{o}lder's inequality and the boundedness of $a_{ij}$,  $a_0$, and $c_0$ to control the terms  involving $F_2$ and $F_3$ by
\[
 \left( \fint_{Q_{8\rho}^+(\hat{z})} |F_2|^q \ \mu(dz) \right)^{1/q} \leq C(\nu, p, q) a^{\#}_{\rho_0}(\hat z)^{\frac{1}{q} - \frac{1}{p}}\left( \fint_{Q_{8\rho}^+(\hat{z})}\Big|\frac{D_{d}u}{x_d}\Big|^p \ \mu(dz) \right)^{1/p}
\]
and
\[
 \left( \fint_{Q_{8\rho}^+(\hat{z})} |F_3|^q \ \mu(dz) \right)^{1/q} \leq C(\nu, p, q) a^{\#}_{\rho_0}(\hat{z})^{\frac{1}{q} - \frac{1}{p}}\left( \fint_{Q_{8\rho}^+(\hat{z})}(|u_t|+\lambda |u|)^p \ \mu(dz) \right)^{1/p}.
\]
The lemma then follows by combining the above two cases. %proof of the lemma is complete.
\end{proof}
\begin{proposition}  \label{small-spt.thrm} Let $T, \nu, p, q, K, \alpha$, and $\omega$ be as in Theorem \ref{para-main.theorem}. Then, there exist sufficiently small numbers $\delta = \delta (d, \nu, \alpha, p, q, K) >0$ and $\rho_1 = (d, \nu, \alpha, p, q, K) >0$ such that the following statement holds. Let $\rho_0>0$,
$\lambda >0$, and $f \in L_{q,p}(\Omega_T, {\omega}\ d\mu)$.  Suppose that  \eqref{ellipticity}, \eqref{a-b.zero0}, \eqref{extension-type-matrix}, and \eqref{para-VMO} hold. If $u \in W^{1,2}_{q,p}(\Omega,  \omega\ d\mu)$ vanishes on $(t_1 - (\rho_0 \rho_1)^2, t_1]$ for some $t_1 \in \bR$ and satisfies \eqref{eqn.variable-coeff}, then
\[
\begin{split}
& \|u_t\|_{L_{q,p}(\Omega_T, \omega\,d\mu)} + \sqrt{\lambda} \|Du\|_{L_{q,p}(\Omega_T, \omega\,d\mu)} + \|D^2 u\|_{L_{q,p}(\Omega_T, \omega\,d\mu)} + \lambda \|u\|_{L_{q,p}(\Omega_T, \omega\,d\mu)} \\
& \leq C(d, \nu, \alpha, p, q, K) \|f\|_{L_{q,p}(\Omega_T, \omega\,d\mu)} .
\end{split}
\]
\end{proposition}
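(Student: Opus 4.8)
The plan is to run the Fefferman--Stein perturbation scheme: bound the weighted mixed norm of $\hat U=(\lambda u,u_t,\sqrt\lambda Du,DD_{x'}u)$ through its sharp function using Lemma \ref{lemma.3.3}, and recover the remaining second derivative $D_d^2 u$ from Lemma \ref{D-dd-control}. Since \eqref{para-VMO} for a given $\rho_0$ implies the same with $\rho_0$ replaced by $\min\{\rho_0,1\}$, I may assume $\rho_0\le1$; and by a standard approximation it suffices to prove the estimate assuming $u$ is regular enough (say $u\in W^{1,2}_{p_0,\loc}(\Omega_T,d\mu)$) for Lemma \ref{lemma.3.3} to apply. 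The key preliminary is to fix auxiliary exponents $1<q_0<p_0<\min\{p,q\}$, chosen close enough to $1$ that the self-improvement (reverse H\"older) property of the Muckenhoupt classes, together with the bounds $[\omega_0]_{A_q},[\omega_1]_{A_p},[\omega_2]_{A_p(\bR_+,\mu)}\le K$, gives $\omega_0\in A_{q/p_0}(\bR)$, $\omega_1\in A_{p/p_0}(\bR^{d-1})$ and $\omega_2\in A_{p/p_0}(\bR_+,\mu)$ with constants depending only on $d,p,q,K$; this makes the sublinear maps $g\mapsto\M(|g|^{q_0})^{1/q_0}$ and $g\mapsto\M(|g|^{p_0})^{1/p_0}$ bounded on $L_{q,p}(\Omega_T,\omega\,d\mu)$ via Theorem \ref{FS-thm}. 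Also, by Lemma \ref{D-dd-control} (where $\omega_2\in M_p(\mu)$ enters),
\[
\|D_du/x_d\|_{L_{q,p}(\Omega_T,\omega\,d\mu)}+\|D^2u\|_{L_{q,p}(\Omega_T,\omega\,d\mu)}\le C\big(\|\hat U\|_{L_{q,p}(\Omega_T,\omega\,d\mu)}+\|f\|_{L_{q,p}(\Omega_T,\omega\,d\mu)}\big).
\]

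Next I would apply Lemma \ref{lemma.3.3} with $(q_0,p_0)$ in place of $(q,p)$ and its parameter $\rho_0$ taken $\le\rho_0$, so that $a^\#_{\rho_0}(\hat z)\le\delta$ by \eqref{para-VMO}, and take the supremum over all half-cylinders containing a point $z$: directly from the lemma for small radii, and, for the remaining radii, from the fact that $u$ is supported in a time interval of length $\le(\rho_0\rho_1)^2$ which, since $\rho_0\le1$, yields the gain $\rho_1^{2(1-1/q_0)}$ (as in Case I of Lemma \ref{lemma.3.3}). This gives, for every $\kappa\in(0,1)$,
\begin{align*}
\hat U^\#(z)&\le C\kappa\,\M(|\hat U|)(z)+C\kappa^{-(d+3)}\rho_1^{2(1-1/q_0)}\M(|\hat U|^{q_0})(z)^{1/q_0}\\
&\quad+C\kappa^{-(d+3)/q_0}\delta^{1/q_0-1/p_0}\M(G^{p_0})(z)^{1/p_0}+C\kappa^{-(d+3)/q_0}\M(|f|^{q_0})(z)^{1/q_0},
\end{align*}
where $G=|D^2u|+|D_du/x_d|+|u_t|+\lambda|u|$ and $C=C(d,\nu,\alpha,p,q,K)$. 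Applying the weighted mixed-norm Fefferman--Stein theorem and then the maximal function theorem of Theorem \ref{FS-thm}, together with the bound on $\|G\|_{L_{q,p}}$ above, I would reach
\[
\|\hat U\|_{L_{q,p}(\Omega_T,\omega\,d\mu)}\le C\big(\kappa+\kappa^{-(d+3)}\rho_1^{2(1-1/q_0)}+\kappa^{-(d+3)/q_0}\delta^{1/q_0-1/p_0}\big)\|\hat U\|_{L_{q,p}(\Omega_T,\omega\,d\mu)}+C\kappa^{-(d+3)/q_0}\|f\|_{L_{q,p}(\Omega_T,\omega\,d\mu)}.
\]
Choosing $\kappa$, then $\delta$, then $\rho_1$ small in that order (each depending only on $d,\nu,\alpha,p,q,K$) absorbs the first three terms into the left-hand side and gives $\|\hat U\|_{L_{q,p}}\le C\|f\|_{L_{q,p}}$; Lemma \ref{D-dd-control} then yields the stated estimate.

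The main obstacle, besides the routine final absorption, is coordinating the two scale regimes with the exponent choice: $q_0<p_0$ must lie strictly below $\min\{p,q\}$ for the mixed-norm maximal function theorem to apply, close enough to $1$ that the dilated weights stay Muckenhoupt with constants controlled by $K$ (which is how $\delta$ and $\rho_1$ come to depend on $K$), and no larger than the available local integrability exponent of $u$; one must also check that the short time-support of $u$ genuinely controls the large-cylinder part of the sharp function uniformly, which is the reason for first reducing to $\rho_0\le1$.
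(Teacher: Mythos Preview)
Your proposal is correct and follows essentially the same approach as the paper: choose auxiliary exponents below $\min\{p,q\}$ via the reverse H\"older property of $A_p$ weights, feed Lemma~\ref{lemma.3.3} into the weighted mixed-norm Fefferman--Stein and maximal theorems (Theorem~\ref{FS-thm}), then close with Lemma~\ref{D-dd-control} and absorb by choosing $\kappa$, $\delta$, $\rho_1$ in order. The paper's version is organized identically, with its $(p_2,p_1)$ playing the role of your $(q_0,p_0)$; your explicit reduction to $\rho_0\le 1$ and the remark on local $W^{1,2}_{p_0}$ regularity are minor clarifications the paper leaves implicit.
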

\begin{proof}
For the given $\omega_0 \in A_q((-\infty,T))$ and $\omega_1\omega_2 \in A_p(\bR^d_+, d\mu)$, using the reverse H\"older's inequality \cite[Theorem 3.2]{MS1981}
%(also see Corollary 7.2.6 and Remark 7.2.3 in \cite{MR3243734})
we choose $p_1=p_1(d,p,q,\alpha,K)\in (1,\min(p,q))$ such that
\begin{equation}
							\label{eq0605_13}
\omega_0 \in A_{q/p_1}((-\infty,T)),\quad
\omega_1\omega_2 \in A_{p/p_1}(\bR^d_+, d\mu).
\end{equation}
Let $p_2=(1+p_1)/2\in (1,p_1)$.
Applying Lemma \ref{lemma.3.3} with $p_2, p_1$ in place of $q, p$ respectively, we see that
\[
\begin{split}
\hat{U}^{\#}(\hat{z}) & \leq C(\nu, d, p, p_1, \alpha)\left[\kappa \mathcal{M}(|\hat{U}|)(\hat{z}) + \kappa^{-(d+3)} \rho_1^{2(1-1/p_2)}\mathcal{M}(|\hat{U}|^{p_2})(\hat{z})^{1/p_2} \right. \\
&  \left. +\kappa^{-\frac{d+3}{p_2}} \mathcal{M} (|f|^{p_2})^{1/p_2}(\hat{z}) +  \kappa^{-\frac{d+3}{p_2}} \delta^{\frac{1}{p_2} -\frac{1}{p_1}} \big( \mathcal{M}(|D^2u|^{p_1})^{1/p_1}(\hat{z}) + \mathcal{M}(|D_du/x_d|^{p_1})^{1/p_1}(\hat{z})\big) \right.\\
&\left. +\kappa^{-\frac{d+3}{p_2}} \delta^{\frac{1}{p_2} -\frac{1}{p_1}} \big( \mathcal{M}(|u_t|^{p_1})^{1/p_1}(\hat{z}) + \lambda \mathcal{M}(|u|^{p_1})^{1/p_1}(\hat{z})\big)
\right]
\end{split}
\]
for any $\hat z\in \overline{\Omega_T}$.  Therefore, it follows from the Fefferman-Stein sharp function theorem, Theorem \ref{FS-thm},  that
\[
\begin{split}
& \norm{\hat{U}}_{L_{q,p}(\Omega_T, \omega\,d\mu)} \\
&\leq C(d,\nu, p, q, \alpha, K) \left[ \kappa \| \mathcal{M}(|\hat{U}|)\|_{L_{q,p}(\Omega_T, \omega\,d\mu)}  + \kappa^{-(d+3)} \rho_1^{2(1-1/p_2)}\| \mathcal{M}(|\hat{U}|^{p_2})^{1/p_2}\|_{L_{q,p}(\Omega_T, \omega\,d\mu)}   \right. \\
&  \,\, \left.  + \kappa^{-\frac{d+3}{p_2}} \| \mathcal{M} (|f|^{p_2})^{\frac 1 {p_2}}\|_{L_{q,p}(\Omega_T, \omega\,d\mu)}+  \kappa^{-\frac{d+3}{p_2}} \delta^{\frac{1}{p_2} -\frac{1}{p_1}} \Big\{ \|\mathcal{M}(|D^2u|^{p_1})^{\frac 1 {p_1}}\|_{L_{q,p}(\Omega_T, \omega\,d\mu)}  \right. \\
& \,\, \left. + \|\mathcal{M}(|D_du/x_d|^{p_1})^{\frac 1 {p_1}}\|_{L_{q,p}(\Omega_T, \omega\,d\mu)} +\|\mathcal{M}(|u_t|^{p_1})^{\frac 1 {p_1}}\|_{L_{q,p}(\Omega_T, \omega\,d\mu)}+\lambda \|\mathcal{M}(|u|^{p_1})^{\frac 1 {p_1}}\|_{L_{q,p}(\Omega_T, \omega\,d\mu)} \Big\}
\right].
\end{split}
\]
It then follows from  \eqref{eq0605_13} and the theorem for Hardy-Littlewood maximal function that
\[
\begin{split}
&\norm{\hat{U}}_{L_{q,p}(\Omega_T, w\,d\mu)}  \leq C(d,\nu, p,q, \alpha, K) \left[ \Big(\kappa + \kappa^{-(d+3) }\rho_1^{2(1-1/p_2)}\Big) \| \hat{U}\|_{L_{q,p}(\Omega_T, w\,d\mu)}\right. \\
& \quad \left. + \kappa^{-\frac{d+3}{p_2}} \|f\|_{L_{q,p}(\Omega_T, w\,d\mu)}   +  \kappa^{-\frac{d+3}{p_2}} \delta^{\frac{1}{p_2} -\frac{1}{p_1}} \Big\{ \|D^2u\|_{L_{q,p}(\Omega_T, w\,d\mu)}  + \| D_du/x_d\|_{L_{q,p}(\Omega_T, w\,d\mu)}\right.\\
&\quad \left.+\|u_t\|_{L_{q,p}(\Omega_T, w\,d\mu)}+
\lambda\| u\|_{L_{q,p}(\Omega_T, w\,d\mu)}\Big\}
\right].
\end{split}
\]
By combining this with Lemma \ref{D-dd-control}, we conclude that
\[
\begin{split}
\norm{\tilde{U}}_{L_{q,p}(\Omega_T, w\,d\mu)}&\leq C(d,\nu, p, q,\alpha, K) \left[ \Big (\kappa + {\kappa^{-(d+3)} \rho_1^{2(1-1/p_2)}}+  \kappa^{-\frac{d+3}{p_2}} \delta^{\frac{1}{p_2} -\frac{1}{p_1}}) \| \tilde{ U}\|_{L_{q,p}(\Omega_T, w\,d\mu)} \right.\\
& \quad \qquad \left.+ (\kappa^{-\frac{d+3}{p_2}}+1) \|f\|_{L_{q,p}(\Omega_T, w\,d\mu)} \right],
\end{split}
\]
where %\textcolor{red}{(Tuoc, I think it is better to include $|D_d u/x_d|$ in $\tilde{U}$)}
$$
\tilde{U}=|u_t|+|D^2 u|+\sqrt\lambda |Du|+\lambda |u| +|D_d u/x_d|.
$$
Now, choose $\kappa$ sufficiently small and then choose $\delta$ and $\rho_1$ sufficiently small depending on $d,\nu, p, q,\alpha$, and $K$ such that
\[
C(d,\nu, p,q,\alpha,K)\Big (\kappa + \kappa^{-(d+3)} \rho_1^{2(1-1/p_2)} +  \kappa^{-\frac{d+3}{p_2}} \delta^{\frac{1}{p_2} -\frac{1}{p}}) <1/2.
\]
%Then, we have
%\[
%\begin{split}
%& \norm{\hat{U}}_{L_p(Q_{r}^+, \mu)}  \leq \frac{1}{2} \| \hat{U}\|_{L_{p}(Q_{2r}^+, \mu)} + C(d, \nu, \alpha, p, q) \left[ \|f\|_{L_p(Q_{2r}^+, \mu)} +  \mu(Q_{2r}^+)^{\frac{1}{p}-1} \| \hat{U}\|_{L_1(Q_{2r}^+, \mu)}
%\right].
%\end{split}
%\]
%From this, and by an iteration,
We then infer that
\[
\begin{split}
& \norm{\tilde{U}}_{L_{q,p}(\Omega_T, w\,d\mu)}   \leq  C(d, \nu, p, q, \alpha, K)  \|f\|_{L_{q,p}(\Omega_T, w\,d\mu)}.
\end{split}
\]
The theorem is proved.
%===========
\end{proof}
Now, we are ready to prove Theorem \ref{para-main.theorem}.
\begin{proof}[Proof of Theorem \ref{para-main.theorem}]
We first prove the estimate \eqref{main-para.est}.  Let $u \in W^{1,2}_{q, p}(\Omega_T, \omega\ d\mu)$ be a solution of \eqref{main-eqn}. In view of Proposition \ref{small-spt.thrm}, we only need to remove the condition that $u$ vanishes on $(t_1 - (\rho_0\rho_1)^2, t_1]$.  To this end, we use a partition of unity argument. Take $\xi \in C_0^\infty(\bR)$  to be a non-negative standard cut-off function vanishing outside $(-\rho_0^2\rho_1^2, 0]$ and
\begin{equation} \label{normalized}
\int_{\bR} \xi^q(t)\ dt =1.
\end{equation}
Let $w_s(t,x) = u(t,x)\xi(t-s)$. Then $w_s$ solves the equation
\[
\left\{
\begin{array}{ccc}
 a_0\partial_t w_s -\mathcal{L} w + \lambda  c_0 w & =  & F_s   \\
\displaystyle{\lim_{x_d \rightarrow 0^+} x_d^\alpha D_d w_s(t,x)} & = & 0
\end{array} \quad \text{in} \quad \Omega_T.  \right.
\]
where
$$F_s(t,x) = f (t,x) \xi(t-s) +  a_0 u(t,x) \xi_t (t-s).
$$
Observe that for each $s \in \bR$, the function $w_s$ vanishes outside $(s-\rho_0^2\rho_1^2, s] \times \bR^d_{+}$. We then apply Proposition \ref{small-spt.thrm} to infer that
\begin{equation} \label{w-s.est}
\begin{split}
& \|\partial_t w_s\|_{L_{q,p}(\Omega_T, \omega\, d\mu)} + \sqrt{\lambda} \|Dw_s\|_{L_{q,p}(\Omega_T, \omega\, d\mu)} + \|D^2 w_s\|_{L_{q,p}(\Omega_T,\omega\, d \mu)} + \lambda \|w_s\|_{L_{q,p}(\Omega_T, \omega\, d\mu)} \\
& \leq C(d, \nu, \alpha, p, q, K) \|F_s\|_{L_{q,p}(\Omega_T, \omega\, d\mu)}.
\end{split}
\end{equation}
We note that from \eqref{normalized},
\[
|D^\sigma_x u (t,x)|^p = \int_{\bR}|D_x^\sigma u (t,x )|^p \xi^q(t-s) \ ds, \quad {\forall \ \sigma \in \bN}.
\]
Therefore, with $\tilde{\omega}(x) = \omega_1(x')\omega_2(x_d)$, we have
\[
\begin{split}
\|D^\sigma_x u(t,\cdot)\|_{L_{p}(\bR^d_+, \tilde{\omega}\, d\mu)}^q & =\int_{\bR}\|D_x^\sigma u (t, \cdot)\|^q_{L_{p}(\bR^d_+, \tilde{\omega}\, d\mu)} \xi^q(t-s) \ ds \\
& =\int_{\bR}\|D_x^\sigma [w_s (t, \cdot)]\|^q_{L_{p}(\bR^d_+, \tilde{\omega}\, d\mu)} \ ds.
\end{split}
\]
Then, by integrating the last estimate with respect to $\omega_0(t)\ dt$, we obtain
\[
\|D^\sigma_x u\|_{L_{p, q}(\Omega_T, \omega\, d\mu)}^q = \int_{\bR}\|D_x^\sigma w_s\|^q_{L_{p,q}(\Omega_T, \omega\, d\mu)} \ ds.
\]
Also, since $u_t \xi(t-s) = \partial_t w_s - u \xi_t(t-s)$, by a similar calculation, we also obtain
\[
\|u_t\|^{q}_{L_{q,p}(\Omega_T, \omega\, d\mu)}  \leq  C\int_{\bR} \|\partial_t  w_s\|^q_{L_{p,q}(\Omega_T, \omega\, d\mu)} \ ds + C (\rho_0\rho_1)^{-2 q} \|u\|^{ q}_{L_{q,p}(\Omega_T, \omega\, d \mu)}.
\]
From the last two estimates and by integrating  the $q$-th power of  \eqref{w-s.est} with respect to $ s$, we conclude that
\[
\begin{split}
& \|u_t\|_{L_{q,p}(\Omega_T, \, d\mu)}  + \sqrt{\lambda} \|D u\|_{L_{q,p}(\Omega_T, \omega\, d\mu)} + \|D^2 u\|_{L_{q,p}(\Omega_T, \omega\, d\mu)} + \lambda \|u\|_{L_{q, p}(\Omega_T, \omega\, d\mu)} \\
& \leq C(d, \nu, \alpha, p, q, K) \Big[ \|f\|_{L_{q,p}(\Omega_T, \omega\, d\mu)} +  (\rho_0\rho_1)^{-2} \|u\|_{L_{q,p}(\Omega_T,\omega\, d \mu)}\Big].
\end{split}
\]
Now, choose $\lambda_0 = 2 C(d, \nu, \alpha, p, q, K)(\rho_0\rho_1)^{-2}$, we obtain \eqref{main-para.est} as long as $\lambda \geq \lambda_0$.

It now remains to prove the existence of solutions, as the uniqueness follows from  \eqref{main-para.est}.
We use the method of continuity. To this end, let us fixed $\lambda \geq \lambda_0$ and  for every $\gamma \in [0, 1]$, consider the operator
\[
\begin{split}
\mathcal{L}_\gamma  & = (1-\gamma)\Big[\partial_t - \Delta -\frac{\alpha}{x_d} D_d + \lambda \Big] + \gamma[ a_0\partial_t - \mathcal{L} + \lambda  c_0] \\
& =  (1-\gamma+\gamma a_0) \partial_t  - \Big[(1-\gamma)\Big(\Delta + \frac{\alpha}{x_d}D_d\Big) + \gamma \mathcal{L} \Big] + \lambda  (1-\gamma+\gamma c_0)\\
& =  a_{0,\gamma}\partial_t  - a_{ij,\gamma} D_{ij}u - \frac{\alpha}{x_d} a_{dd,\gamma} D_d u + \lambda c_{0,\gamma},
\end{split}
\]
where
 \[
a_{ij,\gamma} = (1-\gamma)\delta_{ij} + \gamma a_{ij} \quad \text{for} \quad i,j =1,2,\ldots, d,
\]
and
$$
a_{0,\gamma}=1-\gamma+\gamma a_0,\quad
c_{0,\gamma}=1-\gamma+\gamma c_0.
$$
In particular, $(a_{ij,\gamma})_{i,j=1}^d$,  $a_{0,\gamma}$, and $c_{0,\gamma}$ satisfy the same conditions as $(a_{ij})_{i,j=1}^d$,  $a_0$, and $c_0$ do. Therefore, by the a-priori estimate \eqref{main-para.est}, there is a constant $C_0 = C_0(\nu, d, p, q, \alpha, K)$ independent of $\gamma$ such that
\[
\begin{split}
& \norm{u_t}_{L_{q,p}(\Omega_T, \omega\, d\mu)} + \norm{D^2u}_{L_{q,p}(\Omega_T, \omega\, d\mu)} + \sqrt{\lambda} \norm{Du}_{L_{q,p}(\Omega_T, \omega\, d\mu)} + \lambda \norm{u}_{L_{q, p}(\Omega_T, \omega\, d\mu)}\\
& \leq   C_0\norm{f}_{L_{q,p}(\Omega_T, \omega\, d\mu)}
\end{split}
\]
if $u \in W^{1,2}_{q, p}(\Omega_T, \omega\, d\mu)$ is a solution of
\begin{equation} \label{eqn-gamma}
\left\{
\begin{array}{ccc}
 \mathcal{L}_\gamma u  & = & f\\
 \displaystyle{\lim_{x_d \rightarrow 0^+}x_d^\alpha D_d u} & = & 0
\end{array} \quad \text{in} \quad \Omega_T. \right.
\end{equation}
Moreover, observe that when $\gamma =0$,  the existence of solutions to \eqref{eqn-gamma} follows from Theorem \ref{W-2-p.constant.eqn} and the argument in \cite[Section 8]{Dong-Kim-18}. Hence, by the method of continuity, for every $f \in L_{p,q}(\Omega_T, \omega\, d\mu)$, there exists a unique solution $u \in W^{1,2}_{q, p}(\Omega_T, \omega\, d\mu)$ of the equation \eqref{main-eqn}. See, for instance, \cite[Theorem 1.3.4, p. 15]{Krylov-book}. The proof of the theorem is completed.
\end{proof}

\begin{proof}[Proof of Theorem \ref{elli-main.theorem}]
Let $\lambda_0$ and $\delta$ be as in Theorem \ref{para-main.theorem}, and we prove Theorem \ref{elli-main.theorem} with this choice of $\lambda_0$ and $\delta$. It suffices to prove the estimate \eqref{main-ell.est} as the existence and uniqueness can be proved in the same way as in the proof of Theorem \ref{elli-main.theorem}. We follow an approach used in \cite{Krylov-07}. Let $\lambda \geq \lambda_0$ and let $u \in W^{2}_{p}(\bR^d_+, \omega\, d\mu)$ be a solution of the equation \eqref{elli-main-eqn}, and let $v= \xi(t/n)u(x)$ for $n\in \mathbb{N}$ and for some $\xi \in C_0^\infty(\bR)$ satisfying $\xi=1$ on $(0,1)$. Also, let
$$\tilde{f}(t,x) = \xi(t/n) f(x) + \xi'(t/n) u(x)/n. $$
We see that $v \in W^{1,2}_p(\bR^{d+1}_+, \omega\, d\mu)$ is a solution of
\begin{equation*} %\label{para-ell}
\left\{
\begin{array}{ccc}
v_t - \mathcal{L} v + \lambda  c_0v & = & \tilde{f} \\
\lim_{x_d \rightarrow 0^+} x_d^\alpha D_d v & = & 0
\end{array}\quad \text{in} \quad \bR^{d+1}_+. \right.
\end{equation*}
Then, it follows from Theorem  \ref{para-main.theorem} that
\begin{equation}
                                        \label{eq5.04}
\begin{split}
& \|D^2v\|_{L_p(\bR^{d+1}_+, \omega\, d\mu)} + \sqrt{\lambda} \|D v\|_{L_p(\bR^{d+1}_+, \omega\, d\mu)} + \lambda \|v\|_{L_p(\bR^{d+1}_+, \omega\, d\mu)} \\
&\leq C (\nu, d, p, \alpha, K) \norm{\tilde{f}}_{L_p(\bR^{d+1}_+, \omega\, d\mu)}.
\end{split}
\end{equation}
Let us denote
\[
\gamma_1 = \int_{\bR} |\xi(t)|^p \ dt \quad \text{and} \quad \gamma_2 = \int_{\bR} |\xi'(t)|^p \ dt.
\]
By a simple calculation, we see that
\[
\begin{split}
& \|v\|_{L_p(\bR^{d+1}_+, \omega\, d\mu)}^p = \gamma_1 n\|u\|_{L_p(\bR^{d}_+, \omega\, d\mu)}^p, \quad \|Dv\|_{L_p(\bR^{d+1}_+, \omega\, d\mu)}^p =  \gamma_1 n\|Du\|_{L_p(\bR^{d}_+, \omega\, d\mu)}^p, \\
& \|D^2v\|_{L_p(\bR^{d+1}_+, \omega\, d\mu)}^p =  \gamma_1 n\|D^2u\|_{L_p(\bR^{d}_+, \omega\, d\mu)}^p. %\quad \|v_t\|_{L_p(\bR^{d+1}_+, \omega\, d\mu)}^p =  \gamma_2 n^{1-p}\|u\|_{L_p(\bR^{d}_+, \omega\, d\mu)}^p.
\end{split}
\]
Moreover,
\[
\norm{\tilde{f}}_{L_p(\bR^{d+1}_+, \omega\, d\mu)}^p \leq \gamma_1 n\norm{f}_{L_p(\bR^{d+1}_+, \omega\, d\mu)}^p + \gamma_2 n^{1-p}\norm{v}_{L_p(\bR^{d+1}_+, \omega\, d\mu)}^p.
\]
By substituting these estimates into \eqref{eq5.04}, we have
\[
\begin{split}
& \|D^2u\|_{L_p(\bR^{d+1}_+, \omega\, d\mu)} + \sqrt{\lambda} \|D u\|_{L_p(\bR^{d+1}_+, \omega\, d\mu)} +  \lambda \|u\|_{L_p(\bR^{d+1}_+, \omega\, d\mu)} \\
&\leq C(\nu, d, p, \alpha, K) \left[  \norm{\tilde{f}}_{L_p(\bR^{d+1}_+, \omega\, d\mu)} + n^{-1}  \|u\|_{L_p(\bR^{d+1}_+, \omega\, d\mu)} \right].
\end{split}
\]
Then, letting $n \rightarrow \infty$, we obtain \eqref{main-ell.est}. The theorem is proved.
\end{proof}


\begin{thebibliography}{m}
%%------------------------------------------------------------------------------%
%
%\bibitem{A-Mingione} E.  Acerbi,  G. Mingione, {\it Gradient estimates for a class of parabolic systems},  Duke Math. J. 136 (2007), no. 2, 285-320.

\bibitem{BG17} A. Banerjee, N. Garofalo,
{\it Monotonicity of generalized frequencies and the strong
unique continuation property for fractional parabolic
equations}, 36 pp., arXiv: 1709.07243.

%\bibitem{Baroni} P. Baroni, {\it Lorents estimates for degenerate and singular evolutionary systems},  J. Differential Equations 255 (2013), no. 9, 2927-2951.

%\bibitem{Bolegein} V. B\"{o}legein, {\it Global Calder\'{o}n - Zygmund theory for nonlinear parabolic system}, Calc. Var. (2014) 51:555 - 596.

%\bibitem{Bui-Duong} T. A. Bui, X. T. Duong, {\it Global Lorentz estimates for nonlinear parabolic equations on nonsmooth domains}, Calc. Var. Partial Differential Equations 56 (2017), no. 2, Paper No. 47, 24 pp.

\bibitem{Caffa-Sil} L. Caffarelli, L. Silvestre, {\it An extension problem related to the fractional Laplacian}, Commun. Partial Differ. Equ. 32 (2007) 1245-260.

\bibitem{Caffa-Stinga} C. L.  Caffarelli, R. P. Stinga, {\it Fractional elliptic equations, Caccioppoli estimates and regularity}, Ann. Inst. H. Poincar\'e Anal. Non Lin\'eaire 33 (2016), no. 3, 767-807.

\bibitem{CMP}  D. Cao, T Mengesha, and T. Phan, {\it Weighted $W^{1,p}$-estimates for weak solutions of degenerate and singular elliptic equations}, Indiana University Mathematics Journal, to appear.

\bibitem{Chia-Sera} F.  Chiarenza,  R. Serapioni, {\it A remark on a Harnack inequality for degenerate parabolic equations}, Rend. Sem. Mat. Univ. Padova 73 (1985), 179-190.

\bibitem{Chiarenza-1}  F. M. Chiarenza, R. P. Serapioni,  {\it A Harnack inequality for degenerate parabolic equations}, Comm. Partial Differential Equations 9 (1984), no. 8, 719-749.

\bibitem{Chiarenza-Serapioni} F. M. Chiarenza, R. P. Serapioni, {\it Degenerate parabolic equations and Harnack inequality}, Ann. Mat. Pura Appl. (4) 137 (1984), 139-162.

%\bibitem{Dong-Phan-S} H. Dong, T. Phan, {\it Mixed norm Lp-estimates for non-stationary Stokes systems with singular VMO coefficients and applications}, submitted, arXiv:1805.04143

\bibitem{Dong-Phan} H. Dong, T. Phan, {\it Regularity theory for parabolic equations with singular degenerate coefficients}, submitted, arXiv:1802.09294.

\bibitem{Dong-Kim-18} H.  Dong,  D. Kim, Doyoon, {\it On $L_p$-estimates for elliptic and parabolic equations with $A_p$ weights}. Trans. Amer. Math. Soc. 370 (2018), no. 7, 5081-5130.

\bibitem{Dong-Kim-1} H. Dong, D. Kim, {\it Parabolic and elliptic systems in divergence form with variably partially BMO coefficients}, SIAM J. Math. Anal. 43 (2011), no. 3, 1075-1098.

\bibitem{Dong-Kim} H. Dong, D. Kim, {\it Elliptic equations in divergence form with partially BMO coefficients}, Arch. Ration. Mech. Anal. 196 (2010), no. 1, 25-70.

\bibitem{EM} C. L. Epstein and R. Mazzeo, {\it Degenerate Diffusion Operators Arising in Population Biology}, Annals of Mathematics Studies, vol. 185, Princeton University Press, 2013.

%\bibitem{Evans-Savin} L.~C.~ Evans,  O.~ Savin, {\it $C^{1,\alpha}$ regularity for infinity harmonic functions in two dimensions}. Calc. Var. Partial Differential Equations 32 (2008), no. 3, 325-347.

\bibitem{Fabes-1} E. B. Fabes, C. E. Kenig, D. Jerison, {\it Boundary behavior of solutions to degenerate elliptic equations}, Conference on harmonic analysis in honor of Antoni Zygmund, Vol. I, II (Chicago, Ill., 1981), 577-589.

\bibitem{Fabes}  E. B. Fabes, C. E. Kenig, R. P. Serapioni, {\it The local regularity of solutions of degenerate elliptic equations}, Comm. Partial Differential Equations 7 (1982), no. 1, 77-116.

\bibitem{Pop-1}  P. M. N Feehan, C. A. Pop, {\it Degenerate-elliptic operators in mathematical finance and higher-order regularity for solutions to variational equations}, Adv. Differential Equations 20 (2015), no. 3-4, 361-432.

\bibitem{Pop-2}  P. M. N Feehan, C. A. Pop, {\it  Schauder a priori estimates and regularity of solutions to boundary-degenerate elliptic linear second-order partial differential equations}, J. Differential Equations 256 (2014), no. 3, 895-956.

%\bibitem{DiB} E.~DiBenedetto, {\it Degenerate Parabolic Equations}, Universitext. Springer-Verlag, New York, 1993.

%\bibitem{Giaquinta} M. Giaquinta, {\it Multiple integrals in the calculus of variations and nonlinear elliptic systems}, volume 105 of Annals of Mathematics Studies. Princeton University Press, Princeton, NJ, 1983.

%\bibitem{Giaquinta-Struwe} M. Giaquinta, M. Struwe, {\it On the partial regularity of weak solutions of nonlinear parabolic systems}, Math. Z. 179 (1982), no. 4, 437-451.

\bibitem{Grushin} V. V. Gru\v{s}in, {\it A certain class of elliptic pseudodifferential operators that are degenerate on a submanifold}, (Russian) Mat. Sb. (N.S.) 84 (126) 1971 163-195.

\bibitem{Gr-Sa} V. V. Gru\v{s}in, M. A. Savsan, {\it Smoothness of the solutions of boundary value problems for a certain class of elliptic equations of arbitrary order that are degenerate on the boundary of the domain}, (Russian) Vestnik Moskov. Univ. Ser. I Mat. Meh. 30 (1975), no. 5, 33-41.

\bibitem{FHL} Q. Han and F. H. Lin, {\it Elliptic Partial Differential Equations}, Courant Institute of Mathematics Sciences, New York University, New York, 1997.

%\bibitem{Hardy-Little} G. H. Hardy, J. E. Littlewood, G. Polya, {\it Inequalities}, Cambridge Univ. Press, Cambridge,  1952.

\bibitem{Kim-Krylov} D. Kim and N.V. Krylov,  {\it Elliptic differential equations with coefficients measurable with respect to one variable and VMO with respect to the others}, SIAM J. Math. Anal., 39 (2007), 489-506.

\bibitem{Kim-Krylov-1} D. Kim, N. V. Krylov, {\it Parabolic equations with measurable coefficients}, Potential Anal. 26 (2007), no. 4, 345-361.

\bibitem{Krylov-book}   N. V. Krylov,  {\it Lectures on elliptic and parabolic equations in Sobolev spaces}. Graduate Studies in Mathematics, 96. American Mathematical Society, Providence, RI, 2008.

\bibitem{Krylov-07} N. V. Krylov, {\it Parabolic and Elliptic Equations with VMO Coefficients},  Comm. Partial Differential Equations 32 (2007), no. 1-3, 453-475.

%\bibitem{LO} N. Q. Le and O. Savin, {\it Schauder estimates for degenerate Monge-Amp\'ere equations and smoothness of the eigenfunctions}, Invent. Math. 207 (2017), no.1, 389-423.

\bibitem{KDZ16} Doyoon Kim, Hongjie Dong, and Hong Zhang, {\it Neumann problem for non-divergence elliptic and parabolic equations with BMOx coefficients in weighted Sobolev spaces}, Discrete Contin. Dyn. Syst. 36 (2016), no. 9, 4895--4914.

\bibitem{MP} T. Mengesha, T. Phan, {\it Weighted $W^{1,p}$-estimates for weak solutions of degenerate elliptic equations with coefficients degenerate in one variable}, Nonlinear Analysis, Vol. 179 (2019), p. 184-236.

\bibitem{Me} N.G.~Meyers. {\it An $L^p$-estimate for the gradient of solutions of second order elliptic divergence equations}, Ann. Sc. Norm. Super. Pisa Cl. Sci. (3) 17 (1963) 189-206.

\bibitem{MS1981} Roberto~A. Mac{\'{\i}}as and Carlos~A. Segovia. \newblock {\em A well behaved quasi-distance for spaces of homogeneous type},  volume~32 of {\em Trabajos de Matem{\'a}tica}.
\newblock Inst. Argentino Mat., 1981.

\bibitem{Muck} B. Muckenhoupt, {\it Weighted norm inequalities for the Hardy maximal function}. Trans. Amer. Math. Soc. 165 (1972), 207-226.

\bibitem{Mucken-72} B. Muckenhoupt, {\it Hardy's inequality with weights}, Studia Math. 44 (1972), 31 - 38.

\bibitem{Smith-Stredulinsky} P. D. Smith, E. W. Stredulinsky, {\it Nonlinear elliptic systems with certain unbounded coefficients}, Comm. Pure Appl. Math. 37 (1984), no. 4, 495-510.



%\bibitem{Savin} O. Savin,  {\it Small perturbation solutions for elliptic equations}. Comm. Partial Differential Equations 32 (2007), no. 4-6, 557-578.

%\bibitem{Savin-1} O.~Savin,  {\it $C^1$ regularity for infinity harmonic functions in two dimensions}. Arch. Ration. Mech. Anal. 176 (2005), no. 3, 351--361.

\bibitem{Str} E. Stredulinsky, {\it Weighted inequalities and  applications to degenerate elliptic Partial Differential Eequations},  Ph. D. Thesis, Indiana University, 1981.

\bibitem{Stredulinsky}  E. W. Stredulinsky, {\it Higher integrability from reverse H\"{o}lder inequalities}, Indiana Univ. Math. J. 29, 1980, 407-413.

\bibitem{Stinga} P.~R.~ Stinga and J.~ L.~Torrea, {\it Regularity theory and extension problem for fractional nonlocal parabolic equations and the master equation}, SIAM J. Math. Anal. 49 (2017), 3893--3924.

\end{thebibliography}
\end{document}